\newtheorem{theorem}{Theorem}[section]
\newtheorem{proposition}[theorem]{Proposition}
\newtheorem{corollary}[theorem]{Corollary}
\theoremstyle{definition}
\newtheorem{example}[theorem]{Example}
\newtheorem{remark}[theorem]{Remark}
\numberwithin{equation}{section}
\def\<{\langle}
\def\>{\rangle}
\long\def\alert#1{\smallskip{\hskip\parindent\vrule%
\vbox{\advance\hsize-2\parindent\hrule\smallskip\parindent.4\parindent%
\narrower\noindent#1\smallskip\hrule}\vrule\hfill}\smallskip}
\begin{document}

\title[\ci\ ON $\phi$-$n$-ABSORBING PRIMARY IDEALS OF COMMUTATIVE RINGS]{ON $\phi$-$n$-ABSORBING PRIMARY IDEALS OF COMMUTATIVE RINGS}

\author{Hojjat Mostafanasab 
and Ahmad Yousefian Darani}
\address{Department of Mathematics and Applications, University of Mohaghegh Ardabili, P. O. Box 179, Ardabil, Iran} \email{
h.mostafanasab@uma.ac.ir and h.mostafanasab@gmail.com}

\address{Department of Mathematics and Applications, University of Mohaghegh Ardabili, P. O. Box 179, Ardabil, Iran} \email{
yousefian@uma.ac.ir and youseffian@gmail.com}

\thanks{{\scriptsize
\hskip -0.4 true cm 2010 {\it Mathematics Subject Classification}.
Primary 13A15; Secondary 13F05, 13G05.
\newline Keywords: $n$-absorbing ideals; $n$-absorbing primary ideals; $\phi$-$n$-absorbing primary ideals.}}

\begin{abstract}
All rings are commutative with $1\neq0$ and $n$ is a positive integer. Let $\phi:\mathfrak{J}(R)\rightarrow \mathfrak{J}(R)\cup\{\emptyset\}$  be a function
where $\mathfrak{J}(R)$ denotes the set of all ideals of $R$. We say that a proper ideal $I$ of $R$ is  
$\phi$-$n$-absorbing primary if whenever $a_1,a_2,\dots,a_{n+1}\in R$ and $a_1a_2\cdots a_{n+1}\in I\backslash\phi(I)$, 
either $a_1a_2\cdots a_n\in I$ or the product of $a_{n+1}$ with $(n-1)$ of $a_1,\dots,a_n$ is in $\sqrt{I}$.
The aim of this paper is to investigate the concept of $\phi$-$n$-absorbing primary ideals.
\end{abstract}

\maketitle

\section{Introduction}
Throughout this paper $R$ will be a commutative ring with a nonzero identity. 
In \cite{AS}, Anderson and Smith called a proper ideal $I$ of a commutative ring $R$ 
to be {\it weakly prime} if whenever $a,b\in R$ and 
$0\neq ab\in I$, either $a\in I$ or $b\in I$.
In \cite{BS}, Bhatwadekar and Sharma defined a proper ideal $I$ of an integral domain $R$ to be {\it almost prime (resp. $m$-almost prime)} if for $a, b\in R$ with $ab\in I\backslash I^2$, (resp. $ab\in I\backslash I^m, m\geq 3$) either $a\in I$ or $b\in I$. This definition can obviously be made for any commutative ring $R$.  Later, Anderson and Batanieh \cite{AB} gave a generalization of prime ideals which covers all the above mentioned definitions. Let $\phi:\mathfrak{J}(R)\rightarrow \mathfrak{J}(R)\cup\{\emptyset\}$ be a function. A proper ideal $I$ of $R$ is said to be $\phi$-{\it prime} if for $a, b\in R$ with $ab\in I \backslash\phi(I)$, $a\in I$ or $b\in I$.
 Since $I\backslash\phi(I)=I\backslash(I\cap \phi(I))$, without loss of generality we may assume that $\phi(I)\subseteq I$. We henceforth make this assumption.  Weakly primary ideals were first introduced and studied by Ebrahimi Atani and Farzalipour in \cite{EF}. A proper ideal $I$ of $R$ is called {\it weakly primary} if for $a,~b\in R$ with $0\neq ab\in I$, either $a\in I$ or $b\in \sqrt{I}$. In \cite{YF}, Yousefian Darani called a proper ideal $I$ of $R$ to be $\phi$-{\it primary} if for $a, b\in R$ with $ab\in I \backslash\phi(I)$, then either $a\in I$ or $b\in\sqrt{I}$.
He defined the map $\phi_{\alpha}:\mathfrak{J}(R)\rightarrow \mathfrak{J}(R)\cup\{\emptyset\}$ as follows:
\begin{itemize}
\item[(1)] $\phi_{\emptyset}$ : $\phi(I) = \emptyset$ defines primary ideals.
\item[(2)] $\phi_{0}$ : $\phi(I)=0$ defines weakly primary ideals.
\item[(3)] $\phi_{2}$ : $\phi(I)=I^2$ defines almost primary ideals.
\item[(4)] $\phi_{m} (m\geq 2)$ : $\phi(I)=I^m$ defines $m$-almost primary ideals.
\item[(5)] $\phi_{\omega}$ : $\phi(I) = \cap_{m=1}^{\infty} I^m$ defines $\omega$-primary ideals.
\item[(6)] $\phi_{1}$ : $\phi(I) = I$ defines any ideals.
\end{itemize}
Given two functions $\psi_1,\psi_2:\mathfrak{J}(R)\rightarrow \mathfrak{J}(R)\cup\{\emptyset\}$, we define $\psi_1\leq \psi_2$ if $\psi_1(J)\subseteq \psi_2(J)$ for each $J\in\mathfrak{J}(R)$. Note in this case that
$$\phi_{\emptyset}\leq \phi_0\leq \phi_{\omega}\leq\cdots\leq \phi_{m+1}\leq \phi_{m}\leq\cdots\leq \phi_2\leq \phi_1.$$

Badawi in \cite{B} generalized the concept of prime ideals in a different
way. He defined a nonzero proper ideal $I$ of $R$ to be a {\it 2-absorbing
ideal} of $R$ if whenever $a, b, c\in R$ and $abc\in I$, then $ab\in I$ or $%
ac\in I$ or $bc\in I$. Anderson and Badawi 
\cite{AB1} generalized the concept of $2$-absorbing ideals to $n$-absorbing
ideals. According to their definition, a proper ideal $I$ of $R$ is called
an $n$-{\it absorbing} (resp. {\it strongly $n$-absorbing}) ideal if whenever $%
a_1\cdots a_{n+1}\in I$ for $a_1,\dots,a_{n+1}\in R$ (resp. $I_1\cdots
I_{n+1}\subseteq I$ for ideals $I_1,\dots,I_{n+1}$ of $R$), then there are 
$n$ of the $a_i$'s (resp. $n$ of the $I_i$'s) whose product is in $I$. Thus a strongly 1-absorbing ideal is just a prime ideal. Clearly a strongly $n$-absorbing ideal of $R$ is also an $n$-absorbing ideal of $R$. Anderson and Badawi conjectured that these
two concepts are equivalent, e.g., they proved that an ideal $I$ of a Pr\"{u}fer domain $R$ is strongly $n$-absorbing if and only if $I$ is an $n$-absorbing ideal of $R$, \cite[Corollary 6.9]{AB1}. They also gave several results relating strongly $n$-absorbing ideals.
The concept $2$-absorbing ideals has another generalization, called weakly $2$-absorbing
ideals, which has studied in \cite{YB}. A proper ideal $I$ of $R$ is a
{\it weakly 2-absorbing ideal of} $R$ if whenever $a, b, c\in R$ and $0\neq
abc\in I$, then $ab\in I$ or $ac\in I$ or $bc\in I$. Generally, Mostafanasab and Yousefian Darani \cite{MY} called a proper ideal $I$ of $R$ to be a {\it weakly $n$-absorbing} (resp. {\it strongly weakly $n$-absorbing}) ideal if whenever $
0\neq a_1\cdots a_{n+1}\in I$ for $a_1,\dots,a_{n+1}\in R$ (resp. $0\neq I_1\cdots
I_{n+1}\subseteq I$ for ideals $I_1,\dots, I_{n+1}$ of $R$), then there are 
$n$ of the $a_i$'s (resp. $n$ of the $I_i$'s) whose product is in $I$. Clearly a strongly weakly $n$-absorbing ideal of $R$ is also a weakly $n$-absorbing ideal of $R$.  Let $\phi:\mathfrak{J}(R)\rightarrow \mathfrak{J}(R)\cup\{\emptyset\}$ be a function. We say that a proper ideal $I$ of $R$ is a $\phi$-$n$-absorbing ideal of $R$ if $a_1a_2\cdots a_{n+1}\in I\backslash\phi(I)$ for $a_1,a_2,\dots,a_{n+1}\in R$ (resp. $I_1\cdots I_{n+1}\subseteq I\backslash\phi(I)$ for ideals $I_1,\dots, I_{n+1}$ of $R$) implies that there are $n$ of the $a_i$'s (resp. $n$ of the $I_i$'s) whose product is in $I$. Notice that $\phi$-$n$-absorbing ideals of a commutative ring $R$ have already been investigated by Ebrahimpour 
and Nekooei \cite{EN} as $(n,n+1)$-$\phi$-prime ideals.

We call a proper ideal $I$ of $R$ to be a $\phi$-$n$-absorbing primary (resp. strongly $\phi$-$n$-absorbing primary) ideal of $R$ if $a_1a_2\cdots a_{n+1}\in I\backslash\phi(I)$ for some elements $a_1,a_2,\dots,a_{n+1}\in R$ (resp. $I_1\cdots I_{n+1}\subseteq I$ and
$I_1\cdots I_{n+1}\nsubseteq \phi(I)$ for ideals $I_1,\dots, I_{n+1}$ of $R$) implies that either $a_1a_2\cdots a_{n}\in I$ or the product of $a_{n+1}$ with $(n-1)$ of $a_1,a_2,\dots,a_n$ is in $\sqrt{I}$ (resp. either $I_1I_2\cdots I_{n}\subseteq I$ or the product of $I_{n+1}$ with $(n-1)$ of $I_1,I_2,\dots,I_n$ is in $\sqrt{I}$).\\
We can define the map $\phi_{\alpha}:\mathfrak{J}(R)\rightarrow \mathfrak{J}(R)\cup\{\emptyset\}$ as follows:\\
Let $I$ be a $\phi_{\alpha}$-$n$-absorbing primary ideal of $R$. Then
\begin{enumerate}
\item $\phi_{\emptyset}(I)=\emptyset$ $~\Rightarrow~$ $I$ is an $n$-absorbing primary ideal.
\item $\phi_{0}(I)=0$ $~\Rightarrow~$ $I$ is a weakly $n$-absorbing primary ideal.
\item $\phi_{2}(I)=I^2$ $~\Rightarrow~$ $I$ is an almost $n$-absorbing primary ideal.
\item $\phi_{m}(I)=I^m~(m\geq 2)$ $~\Rightarrow~$ $I$ is an $m$-almost $n$-absorbing primary ideal.
\item $\phi_{\omega}(I) = \cap_{m=1}^{\infty} I^m$ $~\Rightarrow~$ $I$ is an $\omega$-$n$-absorbing primary ideal.
\item $\phi_{1}(I) = I$ $~\Rightarrow~$ $I$ is any ideal.
\end{enumerate}

Some of our results use the $R(+)M$ construction. Let $R$ be a ring
and $M$ be an $R$-module. Then $R(+)M = R \times M$ is a ring with identity
$(1, 0)$ under addition defined by $(r, m) + (s, n) = (r + s, m + n)$ and
multiplication defined by $(r, m)(s, n) = (rs, rn + sm)$.

In \cite{Q}, Quartararo et al. said that a commutative ring $R$ is a $u$-{\it ring} provided
$R$ has the property that an ideal contained in a finite union of ideals
must be contained in one of those ideals. They show that every B$\acute{\rm e}$zout ring is a $u$-ring. Moreover, they proved that 
every Pr\"{u}fer domain is a $u$-domain.

Let $R$ be a ring and $\phi:\mathfrak{J}(R)\rightarrow \mathfrak{J}(R)\cup\{\emptyset\}$
be a function. In section 2, we give some basic properties of $\phi$-$n$-absorbing primary ideals. For instance, we prove that
if $\phi$ reverses the inclusion and for every $1\leq i\leq k$, $I_i$ is a $\phi$-$n_i$-absorbing primary ideal of $R$ such that $\sqrt{I_i}$ is a $\phi$-$n_i$-absorbing ideal of $R$, respectively, then $I_1\cap I_2\cap\cdots\cap I_k$ and $I_1I_2\cdots I_k$ are two $\phi$-$n$-absorbing primary ideals of $R$
where $n=n_1+n_2+\cdots+n_k$. It is shown that a Noetherian domain $R$ is a Dedekind domain if
and only if a nonzero $n$-absorbing primary ideal of $R$ is in the form of $I=M_1^{t_1}M_2^{t_2}\cdots M_i^{t_i}$
for some $1\leq i\leq n$ and some distinct maximal ideals $M_1,M_2,\dots, M_i$ of $R$ and some positive integers $t_1,t_2,\dots,t_i$.
Moreover, we prove that if $I$ is an ideal of a ring $R$ such that $\sqrt{I}=M_1\cap M_2\cap\cdots\cap M_n$ where $M_i$'s are maximal ideals of $R$, then $I$ is an $n$-absorbing primary ideal of $R$. We show that if $I$ is an $\phi$-$n$-absorbing primary ideal of $R$ that is not an $n$-absorbing primary
ideal, then $I^{n+1}\subseteq\phi(I)$.

Insection 3, we investigate $\phi$-$n$-absorbing primary ideals of direct products of commutative rings. For example, it is shown that if $R$ is an indecomposable ring and $J$ is a finitely generated $\phi$-$n$-absorbing primary ideal of $R$, where  $\phi\leq\phi_{n+2}$, then $J$ is weakly $n$-absorbing primary. Let $n\geq 2$ be a natural number and $R=R_{1}\times\cdots\times R_{n+1}$ be a decomposable 
ring with identity. Then we prove that $R$ is a von Neumann regular ring if and only if every proper ideal of $R$ is an $n$-almost $n$-absorbing primary ideal of $R$
if and only if every proper ideal of $R$ is an $\omega$-$n$-absorbing primary ideal of $R$.

In Section 4, we study the stability of $\phi$-$n$-absorbing primary ideals with respect
to idealization. As a result of this section, we establish that if $I$ is a proper ideal of $R$ and $M$ is an $R$-module such that $IM=M$, then $I(+)M$ is an $n$-almost $n$-absorbing primary (resp. $\omega$-$n$-absorbing primary) ideal of $R(+)M$ if and only if $I$ is an $n$-almost $n$-absorbing primary  (resp. $\omega$-$n$-absorbing primary) ideal of $R$.

In section 5, we prove that over a $u$-ring $R$ the two concepts of strongly $\phi$-$n$-absorbing primary ideals and of $\phi$-$n$-absorbing primary ideals
are coincide. Moreover, if $R$ is a Pr\"{u}fer domain and $I$ is an ideal of $R$, then $I$ is an $n$-absorbing primary $($resp. a weakly $n$-absorbing primary$)$ ideal of $R$ if and only if $I[X]$ is an $n$-absorbing primary $($resp. a weakly $n$-absorbing primary$)$ ideal of $R[X]$.

\section{Properties of $\phi$-$n$-absorbing primary ideals}
Let $n$ be a positive integer. Consider elements $a_1,\dots,a_n$ and ideals $I_{1},\dots,I_n$ of a ring $R$. Throughout this paper we use the following notations: \\
$a_{1}\cdots \widehat{a_{i}}\cdots a_{n}$: $i$-${th}$ term is excluded from $a_{1}\cdots a_{n}$.\\
Similarly; $I_{1}\cdots \widehat{I_{i}}\cdots I_{n}$: $i$-${th}$ term is excluded from $I_{1}\cdots I_{n}$.\\

 It is obvious that any $n$-absorbing primary ideal of a ring $R$ is a $\phi$-$n$-absorbing primary ideal of $R$. Also it is evident that
 the zero ideal is a weakly $n$-absorbing primary ideal of $R$. Assume that $p_1,p_2,\dots,p_{n+1}$ are distinct prime numbers. We know that the zero ideal $I=\{0\}$ is a weakly $n$-absorbing primary ideal of the ring $\mathbb{Z}_{p_1p_2\cdots p_{n+1}}$. Notice that $p_1p_2\cdots p_{n+1}=0\in I$, but neither $p_1p_2\cdots p_{n}\in I$ nor $p_1\cdots\widehat{p_i}\cdots p_{n+1}\in\sqrt{I}={\rm Nil}(\mathbb{Z}_{p_1p_2\cdots p_{n+1}})$ for every $1\leq i\leq n$. Hence $I$ is not an $n$-absorbing primary ideal of $\mathbb{Z}_{p_1p_2\cdots p_{n+1}}$.
 
\begin{remark}\label{rem1}
Let $I$ be an ideal of a ring $R$ and $\phi:\mathfrak{J}(R)\rightarrow \mathfrak{J}(R)\cup\{\emptyset\}$
be a function.
\begin{enumerate}
\item $I$ is $\phi$-primary if and only if $I$ is $\phi$-1-absorbing primary.
\item If $I$ is $\phi$-$n$-absorbing primary, then it is $\phi$-$i$-absorbing primary for all $i>n$.
\item If $I$ is $\phi$-primary, then it is $\phi$-$n$-absorbing primary for all $n>1$.
\item If $I$ is $\phi$-$n$-absorbing primary for some $n\geq 1$, then there
exists the least $n_0\geq 1$ such that $I$ is $\phi$-$n_0$-absorbing primary. In this case, $I$ is
$\phi$-$n$-absorbing primary for all $n\geq n_0$ and it is not $\phi$-$i$-absorbing primary for $n_0>i>0$.
\end{enumerate}
\end{remark}

\begin{remark}
If $I$ is a radical ideal of a ring $R$, then clearly $I$ is a $\phi$-$n$-absorbing primary (resp. strongly $\phi$-$n$-absorbing primary) ideal if and only if $I$ is a $\phi$-$n$-absorbing (resp. strongly $\phi$-$n$-absorbing) ideal. 
\end{remark}

\begin{theorem}\label{main11}
Let $R$ be a ring and let $\phi:\mathfrak{J}(R)\rightarrow \mathfrak{J}(R)\cup\{\emptyset\}$
be a function. Then the following conditions are equivalent:
\begin{enumerate}
\item $I$ is $\phi$-$n$-absorbing primary;
\item For every elements $x_{1},\dots,x_{n}\in R$ with $x_{1}\cdots x_{n}\notin\sqrt{I}$,
$$(I:_{R}x_{1}\cdots x_{n})\subseteq[\cup_{i=1}^{n-1}(\sqrt{I}:_{R}x_{1}\cdots\widehat{x_{i}}\cdots x_{n})]\cup$$
$$\hspace{4.3cm}(I:_{R}x_{1}\cdots x_{n-1})\cup(\phi(I):_{R}x_{1}\cdots x_{n}).$$ 
\end{enumerate}
\end{theorem}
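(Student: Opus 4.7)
The plan is to prove each direction by applying the $\phi$-$n$-absorbing primary property (for (1)$\Rightarrow$(2)) or the containment in (2) (for (2)$\Rightarrow$(1)) after a judicious relabeling of the $n+1$ factors involved.

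For (1)$\Rightarrow$(2), fix $x_1,\dots,x_n$ with $x_1\cdots x_n\notin\sqrt{I}$ and take $y\in (I:_R x_1\cdots x_n)$. If $x_1\cdots x_n\,y\in\phi(I)$ then $y\in(\phi(I):_R x_1\cdots x_n)$, so assume $x_1\cdots x_n\,y\in I\backslash\phi(I)$. I would apply (1) to the tuple $(a_1,\dots,a_{n+1})=(x_1,\dots,x_{n-1},y,x_n)$, deliberately putting $y$ in the $a_n$-slot and $x_n$ in the $a_{n+1}$-slot. The two outcomes in the definition read: either $a_1\cdots a_n=x_1\cdots x_{n-1}y\in I$ (so $y\in (I:_R x_1\cdots x_{n-1})$), or the product of $a_{n+1}=x_n$ with some $n-1$ of $\{x_1,\dots,x_{n-1},y\}$ lies in $\sqrt{I}$. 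Omitting $y$ in this latter product recovers $x_1\cdots x_n$, which is not in $\sqrt{I}$ by hypothesis; so the omitted factor must be some $x_i$ with $1\le i\le n-1$, giving $y\in (\sqrt{I}:_R x_1\cdots\widehat{x_i}\cdots x_n)$.

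For (2)$\Rightarrow$(1), consider $a_1,\dots,a_{n+1}$ with $a_1\cdots a_{n+1}\in I\backslash\phi(I)$ and assume $a_1\cdots a_n\notin I$; the task is to find $i\in\{1,\dots,n\}$ with $a_{n+1}\cdot a_1\cdots\widehat{a_i}\cdots a_n\in\sqrt{I}$. If $a_1\cdots a_n\notin\sqrt{I}$, apply (2) to $(x_1,\dots,x_n,y)=(a_1,\dots,a_n,a_{n+1})$: the $\phi(I)$-term is ruled out by $a_1\cdots a_{n+1}\notin\phi(I)$, the $(I:_R x_1\cdots x_{n-1})$-term supplies $i=n$ using $I\subseteq\sqrt{I}$, and the first union supplies some $i<n$ directly. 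If instead $a_1\cdots a_n\in\sqrt{I}$, reapply (2) after swapping $a_n$ and $a_{n+1}$, i.e.\ with $(x_1,\dots,x_n,y)=(a_1,\dots,a_{n-1},a_{n+1},a_n)$. If the hypothesis $x_1\cdots x_n=a_1\cdots a_{n-1}a_{n+1}\notin\sqrt{I}$ already fails, then $a_{n+1}\cdot a_1\cdots a_{n-1}\in\sqrt{I}$ is the desired $i=n$ case; otherwise (2) places $a_n$ in one of the three sets on the right, and two of these are contradictory (the $(I:_R a_1\cdots a_{n-1})$-case would force $a_1\cdots a_n\in I$, and the $\phi(I)$-case would force $a_1\cdots a_{n+1}\in\phi(I)$), leaving the desired partial-product conclusion for some $i<n$.

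The main subtlety is the asymmetric appearance of $(I:_R x_1\cdots x_{n-1})$ on the right-hand side of (2)—with $I$ rather than $\sqrt{I}$—which is stronger than what a naive reading of the definition would yield; extracting this sharper term is precisely what forces the relabeling in (1)$\Rightarrow$(2) to place $y$ at position $n$ rather than position $n+1$. The corresponding wrinkle in the converse is the need for a second, swapped application of (2) to dispose of the residual case $a_1\cdots a_n\in\sqrt{I}\backslash I$, which cannot be reached by a single direct use of (2).
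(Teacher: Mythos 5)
Your proof is correct and takes essentially the same route as the paper's: in (1)$\Rightarrow$(2) you place the colon element in the $a_n$-slot and $x_n$ in the $a_{n+1}$-slot exactly as the paper does, and your converse is the paper's argument up to a relabeling (the paper makes a single application of (2) with $a_1$ as the test element against $(a_2,\dots,a_{n+1})$, handling the failure of the hypothesis $a_2\cdots a_{n+1}\notin\sqrt{I}$ as one of the desired conclusions, just as you do for $a_1\cdots a_{n-1}a_{n+1}$). Your initial case split on $a_1\cdots a_n\in\sqrt{I}$ is harmless but redundant, since your second, swapped application already covers both cases.
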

\begin{proof}
(1)$\Rightarrow$(2) Suppose that $x_{1},\dots,x_{n}\in R$ such that $x_{1}\cdots x_{n}\notin\sqrt{I}$. Let $a\in(I:_{R}x_{1}\cdots x_{n})$. So $ax_{1}\cdots x_{n}\in I$. If $ax_{1}\cdots x_{n}\in\phi(I)$, then $a\in(\phi(I):_Rx_{1}\cdots x_{n})$. Assume that $ax_{1}\cdots x_{n}\notin\phi(I)$. Since $x_{1}\cdots x_{n}\notin\sqrt{I}$, then 
either $ax_1\cdots x_{n-1}\in I$, i.e., $a\in(I:_Rx_1\cdots x_{n-1})$ or for some $1\leq i\leq n-1$ we have $ax_{1}\cdots\widehat{x_{i}}\cdots x_{n}\in\sqrt{I}$, i.e., $a\in(\sqrt{I}:_{R}x_{1}\cdots\widehat{x_{i}}\cdots x_{n})$
or $x_1x_2\cdots x_n\in \sqrt{I}$. By assumption the last case is not hold. Consequently $$(I:_{R}x_{1}\cdots x_{n})\subseteq[\cup_{i=1}^{n-1}(\sqrt{I}:_{R}x_{1}\cdots\widehat{x_{i}}\cdots x_{n})]\cup$$
$$\hspace{4.3cm}(I:_{R}x_{1}\cdots x_{n-1})\cup(\phi(I):_{R}x_{1}\cdots x_{n}).$$\\
(2)$\Rightarrow$(1) Let $a_1a_2\cdots a_{n+1}\in I\backslash\phi(I)$ for some $a_1,a_2,\dots,a_{n+1}\in R$ such that $a_1a_2\cdots a_{n}\\\notin I$.
Then $a_1\in(I:_Ra_2\cdots a_{n+1})$. If $a_2\cdots a_{n+1}\in\sqrt{I}$, then we are done. Hence we may assume that $a_2\cdots a_{n+1}\notin\sqrt{I}$ and
so by part (2), $$(I:_{R}a_{2}\cdots a_{n+1})\subseteq[\cup_{i=2}^{n}(\sqrt{I}:_{R}a_{2}\cdots\widehat{a_{i}}\cdots a_{n+1})]\cup$$
$$\hspace{4.3cm}(I:_{R}a_{2}\cdots a_{n})\cup(\phi(I):_{R}a_{2}\cdots a_{n+1}).$$\\
Since $a_1a_2\cdots a_{n+1}\notin\phi(I)$ and $a_1a_2\cdots a_{n}\notin I$, the only possibility is that $a_1\in\cup_{i=2}^{n}(\sqrt{I}:_{R}a_{2}\cdots\widehat{a_{i}}\cdots a_{n+1})$. Then $a_1a_{2}\cdots\widehat{a_{i}}\cdots a_{n+1}\in\sqrt{I}$ for some $2\leq i\leq n$.
Consequently $I$ is $\phi$-$n$-absorbing primary.
\end{proof}

Let $R$ be an integral domain with quotient field $K$. Badawi and Houston \cite{BH} defined a proper ideal $I$ of $R$ to be strongly primary if, whenever $ab\in I$ with $a, b\in K$, we have $a\in I$ or $b\in \sqrt{I}$. In \cite{YF}, a proper ideal $I$ of $R$ is called strongly $\phi$-primary if whenever $ab\in I\backslash\phi(I)$ with $a,b\in K$, we have either $a\in I$ or $b\in \sqrt{I}$. We say that a proper ideal $I$ of $R$ is {\it quotient $\phi$-$n$-absorbing primary} if whenever $x_1x_2\cdots x_{n+1}\in I$ with $x_1,x_2,\dots,x_{n+1}\in K$, we have either  $x_1x_2\cdots x_{n}\in I$ or $x_1\cdots\widehat{x_i}\cdots x_{n+1}\in\sqrt{I}$ for some $1\leq i\leq n$.

\begin{proposition} 
Let $V$ be a valuation domain with the quotient field $K$, and 
let $\phi:\mathfrak{J}(V )\rightarrow \mathfrak{J}(V )\cup\{\emptyset\}$ be a function. Then every  $\phi$-$n$-absorbing primary ideal of $V$ is quotient $\phi$-$n$-absorbing primary.
\end{proposition}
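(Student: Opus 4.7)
The plan is to reduce the statement about elements of $K$ to one about elements of $V$, using the fact that in a valuation domain with valuation $v:K^{*}\to\Gamma$, every nonzero $x\in K$ satisfies $x\in V$ or $x^{-1}\in V$. Suppose $I$ is $\phi$-$n$-absorbing primary and take $x_{1},\dots,x_{n+1}\in K$ with $x_{1}\cdots x_{n+1}\in I\setminus\phi(I)$. Zero factors are harmless (if some $x_{i}=0$, then either $x_{1}\cdots x_{n}=0\in I$ or one of the "hat" products vanishes, which lies in $\sqrt{I}$), so assume each $x_{i}\neq 0$.

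The key step is to pick an index $j\in\{1,\dots,n+1\}$ minimizing $v(x_{j})$ and split on the sign of $v(x_{j})$. If $v(x_{j})\geq 0$, then every $x_{i}$ lies in $V$, and the conclusion is immediate by applying the $\phi$-$n$-absorbing primary property of $I$ directly to $x_{1},\dots,x_{n+1}\in V$: we obtain either $x_{1}\cdots x_{n}\in I$ or $x_{n+1}\cdot x_{1}\cdots\widehat{x_{i}}\cdots x_{n}\in\sqrt{I}$ for some $1\leq i\leq n$, which rewrites as $x_{1}\cdots\widehat{x_{i}}\cdots x_{n+1}\in\sqrt{I}$, matching the quotient definition.

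If instead $v(x_{j})<0$, then $x_{j}^{-1}\in V$, and since $I$ is a $V$-ideal containing $x_{1}\cdots x_{n+1}$,
\[
x_{1}\cdots\widehat{x_{j}}\cdots x_{n+1}\;=\;x_{j}^{-1}(x_{1}\cdots x_{n+1})\;\in\;V\cdot I\;=\;I\;\subseteq\;\sqrt{I}.
\]
If $j=n+1$ this is the first alternative $x_{1}\cdots x_{n}\in I$; if $j\leq n$ it is the second alternative. Thus both cases produce the desired conclusion.

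The argument carries no significant obstacle: the valuation structure does the work in the "dirty" case (some $x_{i}\notin V$), and the hypothesis on $I$ is only invoked in the "clean" case where all $x_{i}$ already lie in $V$. The only point requiring care is the routine bookkeeping of the index $j$ when translating the ideal-membership conclusion $x_{1}\cdots\widehat{x_{j}}\cdots x_{n+1}\in I$ into one of the two alternatives permitted by the definition of \emph{quotient} $\phi$-$n$-absorbing primary.
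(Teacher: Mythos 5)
Your proof is correct and follows essentially the same route as the paper's: everything rests on the valuation-domain dichotomy ($x\in V$ or $x^{-1}\in V$ for $x\in K^{*}$), so that any factor $x_j\notin V$ forces $x_1\cdots\widehat{x_j}\cdots x_{n+1}=x_j^{-1}(x_1\cdots x_{n+1})\in I$, while if every factor lies in $V$ one applies the hypothesis on $I$ directly; your valuation-minimizing index $j$ is merely a way of organizing this same case split. If anything you are slightly more careful than the paper, which states the premise without excluding $\phi(I)$ and dismisses the all-factors-in-$V$ case with ``nothing to prove'' rather than explicitly invoking the $\phi$-$n$-absorbing primary hypothesis as you do.
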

\begin{proof}
Assume that $I$ is a $\phi$-$n$-absorbing primary ideal of $V$. Let $x_1x_2\cdots x_{n+1}\in I$ for some $x_1,x_2,\dots, x_{n+1}\in K$ such that $x_1x_2\cdots x_{n}\notin I$. If $x_{n+1}\notin V$, then $x_{n+1}^{-1}\in V$, since $V$ is valuation. So $x_1\cdots x_{n}x_{n+1}x_{n+1}^{-1}=x_1\cdots x_{n}\in I$, a contradiction. Hence $x_{n+1}\in V$. If $x_i\in V$ for every $1\leq i\leq n$, then there is nothing to prove. If $x_i\notin V$ for some $1\leq i\leq n$, then $x_1\cdots\widehat{x_i}\cdots x_{n+1}\in I\subseteq\sqrt{I}$. Consequently, $I$ is quotient $\phi$-$n$-absorbing primary.
\end{proof}

\begin{proposition} 
Let $R$ be a von Neumann regular ring and let $\phi:\mathfrak{J}(R)\rightarrow \mathfrak{J}(R)\cup\{\emptyset\}$ 
be a function. Then $I$ is a $\phi$-$n$-absorbing primary ideal of $R$ if and only if $e_1e_2\cdots e_{n+1}\in I\backslash\phi(I)$ for some idempotent elements $e_1,e_2,\dots,e_{n+1}\in R$ implies that either $e_1e_2\cdots e_{n}\in I$ or $e_1\cdots\widehat{e_i}\cdots e_{n+1}\in\sqrt{I}$ for some $1\leq i\leq n$.
\end{proposition}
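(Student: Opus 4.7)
The forward direction is immediate: idempotents are elements, so the defining property of a $\phi$-$n$-absorbing primary ideal specializes to the idempotent hypothesis. The whole content is the converse, and my plan is to reduce the general case to the idempotent case using the standard fact that in a von Neumann regular ring every principal ideal is generated by an idempotent.

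Suppose $a_1 a_2 \cdots a_{n+1} \in I \setminus \phi(I)$ for some $a_1,\dots,a_{n+1}\in R$. Because $R$ is von Neumann regular, for each $i$ there exists $b_i \in R$ with $a_i = a_i^2 b_i$; then $e_i := a_i b_i$ is idempotent, $a_i = a_i e_i$, $e_i = a_i b_i$, so $(a_i) = (e_i)$ as principal ideals. The key commutative-ring identity $(x_1)(x_2)\cdots(x_k) = (x_1 x_2 \cdots x_k)$ then gives, for any nonempty subset $S \subseteq \{1,\dots,n+1\}$,
\[
\Bigl(\prod_{i\in S} a_i\Bigr) \;=\; \prod_{i\in S}(a_i) \;=\; \prod_{i\in S}(e_i) \;=\; \Bigl(\prod_{i\in S} e_i\Bigr).
\]
Since an element $x$ lies in an ideal $J$ if and only if $(x)\subseteq J$, the product $\prod_{i\in S}a_i$ lies in a given ideal $J$ of $R$ (or in $\sqrt{J}$) precisely when $\prod_{i\in S}e_i$ does.

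Applying this to $J = I$ and $J = \phi(I)$ with $S = \{1,\dots,n+1\}$ yields $e_1 e_2 \cdots e_{n+1} \in I \setminus \phi(I)$. By the hypothesis on idempotents, either $e_1 e_2 \cdots e_n \in I$ or there exists $1\le i \le n$ with $e_1\cdots\widehat{e_i}\cdots e_{n+1}\in\sqrt{I}$. Translating back through $(a_1\cdots a_n) = (e_1\cdots e_n)$ and $(a_1\cdots\widehat{a_i}\cdots a_{n+1}) = (e_1\cdots\widehat{e_i}\cdots e_{n+1})$, I obtain either $a_1 a_2 \cdots a_n \in I$ or $a_1\cdots\widehat{a_i}\cdots a_{n+1}\in\sqrt{I}$. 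Hence $I$ is $\phi$-$n$-absorbing primary.

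No step here is really an obstacle; the only point requiring care is to verify all the equivalences $\prod_{i\in S}a_i \in J \Longleftrightarrow \prod_{i\in S}e_i \in J$ simultaneously for the different subsets $S$ arising (the full product, the first $n$ factors, and each of the $n$ hat-deleted products), and both for $J=I,\phi(I)$ and for $J$ replaced by $\sqrt{I}$. This is exactly what the principal-ideal equality $(a_i)=(e_i)$ delivers, so the argument is clean once that observation is in place.
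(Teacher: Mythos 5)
Your proposal is correct and follows exactly the route the paper intends: the paper's entire proof is the one-line remark that every finitely generated (in particular principal) ideal of a von Neumann regular ring is generated by an idempotent, and your argument simply spells out the resulting reduction via $(a_i)=(e_i)$ and $(x_1)(x_2)\cdots(x_k)=(x_1x_2\cdots x_k)$. The details you supply (the construction $e_i=a_ib_i$ from $a_i=a_i^2b_i$ and the membership equivalences for $I$, $\phi(I)$, and $\sqrt{I}$) are all accurate.
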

\begin{proof}
Notice the fact that any finitely generated ideal of a von Neumann regular ring $R$ is generated by an idempotent element.
\end{proof}

\begin{theorem} \label{phi}
Let $R$ be a ring and let $\phi:\mathfrak{J}(R)\rightarrow \mathfrak{J}(R)\cup\{\emptyset\}$ 
be a function. If $I$ is a $\phi$-$n$-absorbing primary ideal of $R$ such that $\sqrt{\phi(I)}=\phi(\sqrt{I})$, then $\sqrt{I}$ is a $\phi$-$n$-absorbing ideal of $R$.
\end{theorem}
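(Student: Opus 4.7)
The goal is to show that whenever $a_1 a_2 \cdots a_{n+1} \in \sqrt{I}\setminus \phi(\sqrt{I})$ for some $a_1,\dots,a_{n+1}\in R$, there are $n$ of the $a_i$'s whose product lies in $\sqrt{I}$. The plan is to leverage the fact that being in $\sqrt{I}$ is detected by some power lying in $I$, and then apply the $\phi$-$n$-absorbing primary hypothesis to the resulting elements. The given equality $\sqrt{\phi(I)} = \phi(\sqrt{I})$ is the bridge that transfers the ``not in $\phi(\sqrt{I})$'' information up to ``not in $\phi(I)$'' after raising to a power.

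Concretely, suppose $a_1 \cdots a_{n+1} \in \sqrt{I}\setminus \phi(\sqrt{I})$. Choose $k\geq 1$ with $(a_1 \cdots a_{n+1})^k = a_1^k\cdots a_{n+1}^k \in I$. Since $a_1 \cdots a_{n+1} \notin \phi(\sqrt{I}) = \sqrt{\phi(I)}$, no power of $a_1 \cdots a_{n+1}$ lies in $\phi(I)$; in particular $a_1^k \cdots a_{n+1}^k \notin \phi(I)$. Thus $a_1^k \cdots a_{n+1}^k \in I\setminus \phi(I)$, so the $\phi$-$n$-absorbing primary property applied to the elements $a_1^k,\dots,a_{n+1}^k$ yields two cases.

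In the first case, $a_1^k \cdots a_n^k = (a_1\cdots a_n)^k \in I$, which gives $a_1\cdots a_n \in \sqrt{I}$, exhibiting $n$ of the original elements whose product is in $\sqrt{I}$. In the second case, for some $1\le i\le n$, we have $a_{n+1}^k\,(a_1\cdots \widehat{a_i}\cdots a_n)^k = (a_{n+1}\,a_1\cdots \widehat{a_i}\cdots a_n)^k \in \sqrt{I}$, hence $a_{n+1}\,a_1\cdots \widehat{a_i}\cdots a_n \in \sqrt{I}$, again producing $n$ of the $a_j$'s whose product lies in $\sqrt{I}$. This verifies the $\phi$-$n$-absorbing condition for $\sqrt{I}$.

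There is no serious obstacle here: the argument is essentially a compatibility check between radicals, powers, and the hypothesis $\sqrt{\phi(I)} = \phi(\sqrt{I})$. The only subtlety worth flagging is that the absorbing-primary conclusion is asymmetric (the distinguished factor is $a_{n+1}$, and the radical-type conclusion is required to include it), but since the hypothesis of membership in $I$ is symmetric in $a_1^k,\dots,a_{n+1}^k$, nothing is lost in applying the definition with $a_{n+1}^k$ singled out.
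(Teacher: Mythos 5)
Your proof is correct and follows essentially the same route as the paper's: raise the elements to a power $k$ so the product lands in $I$, use the hypothesis $\sqrt{\phi(I)}=\phi(\sqrt{I})$ to conclude the power cannot lie in $\phi(I)$, and then apply the $\phi$-$n$-absorbing primary property to $a_1^k,\dots,a_{n+1}^k$ and take radicals of the resulting conclusions. The only cosmetic difference is that the paper phrases the argument contrapositively (assuming none of the products involving $a_{n+1}$ is in $\sqrt{I}$), whereas you carry the disjunction through directly.
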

\begin{proof}
Let $x_1x_2\cdots x_{n+1}\in\sqrt{I}\backslash\phi(\sqrt{I})$ for some $x_1,x_2,\dots,x_{n+1}\in R$ such that $x_1\cdots\widehat{x_i}\cdots x_{n+1}\notin\sqrt{I}$ for every $1\leq i\leq n$. Then there is a natural number $m$ such that $x_1^mx_2^m\cdots x_{n+1}^m\in I$. If $x_1^mx_2^m\cdots x_{n+1}^m\in\phi(I)$, then $x_1x_2\cdots x_{n+1}\in\sqrt{\phi(I)}=\phi(\sqrt{I})$, which is a contradiction.
Since $I$ is $\phi$-$n$-absorbing primary, our hypothesis implies that $x_1^mx_2^m\cdots x_{n}^m\in I$. Hence
$x_1x_2\cdots x_{n}\in \sqrt{I}$. Therefore $\sqrt{I}$ is a $\phi$-$n$-absorbing ideal of $R$.
\end{proof}

\begin{corollary}\label{minimal}
Let $I$ be an $n$-absorbing primary ideal of $R$. Then $\sqrt{I}=P_1\cap P_2\cap\cdots\cap P_{i}$ where $1\leq i\leq n$ and $P_i$'s are the only distinct prime ideals of $R$ that are minimal over $I$.
\end{corollary}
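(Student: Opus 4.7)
The plan is to pass to the radical to reduce from the $n$-absorbing primary hypothesis to an $n$-absorbing hypothesis, and then invoke a known finiteness result for minimal primes of $n$-absorbing ideals. Specifically, I would apply Theorem \ref{phi} with $\phi=\phi_\emptyset$: every $n$-absorbing primary ideal is a $\phi_\emptyset$-$n$-absorbing primary ideal by definition, and since $\phi_\emptyset(J)=\emptyset$ for every ideal $J$, the hypothesis $\sqrt{\phi(I)}=\phi(\sqrt{I})$ is satisfied trivially. The theorem then yields that $\sqrt{I}$ is an $n$-absorbing ideal of $R$.

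Next, I would invoke a theorem of Anderson and Badawi \cite{AB1} stating that an $n$-absorbing ideal of a commutative ring has at most $n$ distinct minimal prime divisors. Applied to $\sqrt{I}$, this yields distinct prime ideals $P_1,\dots,P_i$ with $1\leq i\leq n$ that are precisely the primes minimal over $\sqrt{I}$; these coincide with the primes minimal over $I$, since an ideal and its radical always share their minimal primes. Because $\sqrt{I}$ is already a radical ideal, it equals the intersection of its minimal primes, giving $\sqrt{I}=P_1\cap P_2\cap\cdots\cap P_i$, which is the desired conclusion.

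The only delicate point is the appeal to Theorem \ref{phi}: strictly speaking, one must interpret the condition $\sqrt{\phi(I)}=\phi(\sqrt{I})$ when $\phi$ returns $\emptyset$. However, the proof of Theorem \ref{phi} uses that hypothesis only to rule out the case $x_1^m\cdots x_{n+1}^m\in\phi(I)$, which is vacuous when $\phi(I)=\emptyset$, so the argument carries over unchanged to the $\phi_\emptyset$-setting. Alternatively, one may bypass this convention issue entirely by re-running the short proof of Theorem \ref{phi} directly in the $n$-absorbing primary case; everything else is then routine.
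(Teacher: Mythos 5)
Your proof is correct and is essentially identical to the paper's: the authors likewise specialize Theorem \ref{phi} to $\phi=\phi_\emptyset$ to conclude that $\sqrt{I}$ is $n$-absorbing, and then cite Anderson--Badawi \cite[Theorem 2.5]{AB1} for the bound on the minimal primes. Your remark that the hypothesis $\sqrt{\phi(I)}=\phi(\sqrt{I})$ is vacuously satisfied when $\phi=\phi_\emptyset$ is a sensible clarification of a point the paper leaves implicit.
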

\begin{proof}
In Theorem \ref{phi}, suppose that $\phi=\phi_{\emptyset}$. Now apply \textrm{\cite[Theorem 2.5]{AB1}}.
\end{proof}

\begin{theorem}\label{cap}
Let $R$ be a ring, and let $\phi:\mathfrak{J}(R)\rightarrow \mathfrak{J}(R)\cup\{\emptyset\}$  be a function
that reverses the inclusion. Suppose that for every $1\leq i\leq k$, $I_i$ is a $\phi$-$n_i$-absorbing primary ideal of $R$ such that $\sqrt{I_i}=P_i$ is a $\phi$-$n_i$-absorbing ideal of $R$, respectively. Set $n:=n_1+n_2+\cdots+n_k$. The following conditions hold:
\begin{enumerate}
\item $I_1\cap I_2\cap\cdots\cap I_k$ is a $\phi$-$n$-absorbing primary ideal of $R$.
\item $I_1I_2\cdots I_k$ is a $\phi$-$n$-absorbing primary ideal of $R$.
\end{enumerate}
\end{theorem}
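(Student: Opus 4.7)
I would prove (1) and (2) by induction on $k$, reducing to the case $k=2$. The base $k=1$ is immediate. For the inductive step, apply the $k=2$ result to the pair $(I_1\cap\cdots\cap I_{k-1},\,I_k)$ for (1), and to $(I_1\cdots I_{k-1},\,I_k)$ for (2); to preserve the hypotheses at each stage, we also need $P_1\cap\cdots\cap P_{k-1}$ to be $\phi$-$(n_1+\cdots+n_{k-1})$-absorbing, which I would handle via a parallel induction invoking the analogous intersection result for $\phi$-$n$-absorbing ideals.

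For the $k=2$ case of (1), let $a_1\cdots a_{n+1}\in(I_1\cap I_2)\setminus\phi(I_1\cap I_2)$ with $n=n_1+n_2$. Since $\phi$ reverses inclusion and $I_1\cap I_2\subseteq I_j\subseteq P_j$, the chain $\phi(P_j)\subseteq\phi(I_j)\subseteq\phi(I_1\cap I_2)$ shows $a_1\cdots a_{n+1}\in I_j\setminus\phi(I_j)$ and $a_1\cdots a_{n+1}\in P_j\setminus\phi(P_j)$ for both $j\in\{1,2\}$. Apply the $\phi$-$n_j$-absorbing primary property of $I_j$ to the grouping $c_1=a_1,\dots,c_{n_j-1}=a_{n_j-1}$, $c_{n_j}=a_{n_j}\cdots a_n$, $c_{n_j+1}=a_{n+1}$: the conclusion is either $a_1\cdots a_n\in I_j$, or (after expanding the grouped factor and padding with unused $a_i$'s) an $n$-element subset $T_j\subseteq\{1,\dots,n+1\}$ containing $a_{n+1}$ with $\prod_{i\in T_j}a_i\in P_j$. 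When both $j$'s give the $I$-alternative, $a_1\cdots a_n\in I_1\cap I_2$, as desired. Otherwise we must produce a single $n$-subset $T$ containing $a_{n+1}$ with $\prod_{i\in T}a_i\in P_1\cap P_2=\sqrt{I_1\cap I_2}$; here I would exploit the $\phi$-$n_j$-absorbing hypothesis on $P_j$, applied to $a_1\cdots a_{n+1}\in P_j\setminus\phi(P_j)$ with alternate groupings, to extract additional subsets $S_j\subseteq\{1,\dots,n\}$ of size $n_j$ with $\prod_{i\in S_j}a_i\in P_j$, and then take $T=S_1\cup S_2\cup\{n+1\}$ (padded to size $n$ if necessary), which lies in $P_1\cap P_2$ since each $\prod_{i\in S_j}a_i$ divides the $T$-product and the ambient ideal $P_j$ absorbs the extra factors.

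For (2), the setup is identical because $\sqrt{I_1 I_2}=\sqrt{I_1\cap I_2}$ and the inclusion-reversing property again transfers non-membership in $\phi(I_1 I_2)$ to each $\phi(I_j)$ and $\phi(P_j)$. The only new step is to strengthen the pure-$I$ outcome from $a_1\cdots a_n\in I_1\cap I_2$ to $a_1\cdots a_n\in I_1 I_2$; I would accomplish this by rerunning the $\phi$-$n_j$-absorbing primary property of each $I_j$ with a grouping that isolates the first $n_1$ factors into a witness for $I_1$ and the next $n_2$ factors into a witness for $I_2$, so the factorization $a_1\cdots a_n=(a_1\cdots a_{n_1})(a_{n_1+1}\cdots a_n)$ explicitly exhibits membership in the product ideal. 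The genuine obstacle throughout is the combinatorial coordination of the $T_j$'s (and the auxiliary $S_j$) so that the resulting $n$-subset lies in $P_1\cap P_2$ and still contains $a_{n+1}$; this bookkeeping is precisely where the hypothesis that each $P_j$ is $\phi$-$n_j$-absorbing becomes indispensable.
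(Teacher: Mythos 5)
There is a genuine gap in your $k=2$ case, located exactly where you defer to ``bookkeeping.'' When not both $j$ return the $I$-alternative, you propose to always manufacture an $n$-element subset $T\ni n+1$ with $\prod_{i\in T}a_i\in P_1\cap P_2$ by taking $T=S_1\cup S_2\cup\{n+1\}$, where $|S_j|=n_j$ and $\prod_{i\in S_j}a_i\in P_j$. But if $S_1\cap S_2=\emptyset$ and $n+1\notin S_1\cup S_2$, then $S_1\cup S_2=\{1,\dots,n\}$ and $T$ is the full index set of size $n+1$: it cannot be ``padded to size $n$,'' and ``$a_1\cdots a_{n+1}\in\sqrt{I_1\cap I_2}$'' is not one of the permitted conclusions. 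This disjoint configuration is not a corner case; it is the essential one. Under the standing assumption that $a_1\cdots\widehat{a_i}\cdots a_{n+1}\notin\sqrt{I_1\cap I_2}$ for every $i\le n$, any overlap $S_1\cap S_2\neq\emptyset$ (or $n+1\in S_1\cup S_2$) makes $|S_1\cup S_2\cup\{n+1\}|\le n$, and padding then produces a forbidden $n$-subset product in $P_1\cap P_2$ -- an immediate contradiction. So one is forced into $S_1\sqcup S_2=\{1,\dots,n\}$, where no radical conclusion of the required shape is available and the theorem's conclusion must instead be $a_1\cdots a_n\in I_1\cap I_2$. Your proposal never reaches that conclusion.

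The missing idea is the pivot back from the radicals to the ideals themselves. Once $S_1,S_2$ partition $\{1,\dots,n\}$, suppose $\prod_{i\in S_1}a_i\notin I_1$ and apply the $\phi$-$n_1$-absorbing primary property of $I_1$ to the factorization of $a_1\cdots a_{n+1}$ into the $n_1$ elements $\{a_i\}_{i\in S_1}$ and the single last factor $c=\bigl(\prod_{i\in S_2}a_i\bigr)a_{n+1}$. The first alternative fails by supposition, so $c$ times $n_1-1$ of the $a_i$ with $i\in S_1$ lies in $\sqrt{I_1}=P_1$; that same product contains the whole block $S_2$, hence lies in $P_2$ as well, and it is exactly some $a_1\cdots\widehat{a_{i_0}}\cdots a_{n+1}\in P_1\cap P_2$ -- contradiction. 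Therefore each block product lies in its $I_j$ and $a_1\cdots a_n\in I_1I_2\subseteq I_1\cap I_2$, which simultaneously delivers part (2). (Your plan for (2), forcing the witness blocks to be $\{1,\dots,n_1\}$ and $\{n_1+1,\dots,n\}$ in the given order, is not available: the blocks are dictated by the absorbing property of the $P_j$'s, not chosen.) This final step is the substance of the paper's proof, which runs the argument for all $k$ blocks at once rather than by induction; your reduction to $k=2$ is fine in outline (granting the intersection statement for $\phi$-absorbing ideals, which the paper also uses without proof), but the $k=2$ engine as you describe it does not close.
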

\begin{proof}
Set $L=I_1\cap I_2\cap\cdots\cap I_k$. Then $\sqrt{L}=P_1\cap P_2\cap\cdots\cap P_k$. Suppose that $a_1a_2\cdots a_{n+1}\in L\backslash\phi(L)$ for some $a_1,a_2,\dots,a_{n+1}\in R$ and $a_1\cdots\widehat{a_i}\cdots a_{n+1}\notin\sqrt{L}$ for every
$1\leq i\leq n$. By, $\sqrt{L}=P_1\cap P_2\cap\cdots\cap P_k$ is $\phi$-$n$-absorbing, then $a_1a_2\cdots a_{n}\in P_1\cap P_2\cap\cdots\cap P_k$. We claim that $a_1a_2\cdots a_{n}\in L$. For every $1\leq i\leq k$, $P_i$ is $\phi$-$n_i$-absorbing
and $a_1a_2\cdots a_{n}\in P_i\backslash\phi(P_i)$, then there exist elements $1\leq{\beta^i_1},{\beta^i_2},\dots,{\beta^i_{n_i}}\leq n$ such that $a_{\beta^i_1}a_{\beta^i_2}\cdots a_{\beta^i_{n_i}}\in P_i$. If $\beta^l_r=\beta^m_s$ for two pairs $l,r$ and $m,s$, then
$$a_{\beta^1_1}a_{\beta^1_2}\cdots a_{\beta^1_{n_1}}\cdots a_{\beta^l_1}a_{\beta^l_2}\cdots a_{\beta^l_r}\cdots a_{\beta^l_{n_l}}\cdots $$
$$\hspace{1cm}a_{\beta^m_1}a_{\beta^m_2}\cdots\widehat{a_{\beta^m_s}}\cdots a_{\beta^m_{n_m}}\cdots a_{\beta^k_1}a_{\beta^k_2}\cdots a_{\beta^k_{n_k}}\in\sqrt{L}.$$
Therefore $a_1\cdots\widehat{a_{\beta^m_s}}\cdots a_na_{n+1}\in\sqrt{L}$, a contradiction. So $\beta^i_j$'s are distinct.
Hence $\{a_{\beta^1_1},a_{\beta^1_2},\dots,a_{\beta^1_{n_1}}, a_{\beta^2_1},a_{\beta^2_2},\dots, a_{\beta^2_{n_2}},\dots,a_{\beta^k_1},a_{\beta^k_2},\dots,a_{\beta^k_{n_k}}\}=\{a_1,a_2,\dots,a_n\}$. If 
$a_{\beta^i_1}a_{\beta^i_2}\cdots a_{\beta^i_{n_i}}\in I_i$ for every $1\leq i\leq k$, then 
$$a_1a_2\cdots a_n=a_{\beta^1_1}a_{\beta^1_2}\cdots a_{\beta^1_{n_1}}a_{\beta^2_1}a_{\beta^2_2}\cdots a_{\beta^2_{n_2}}\cdots a_{\beta^k_1}a_{\beta^k_2}\cdots a_{\beta^k_{n_k}}\in L,$$
thus we are done. Therefore we may assume that $a_{\beta^1_1}a_{\beta^1_2}\cdots a_{\beta^1_{n_1}}\notin I_1$. Since $I_1$ is 
$\phi$-$n_1$-absorbing primary and $$a_{\beta^1_1}a_{\beta^1_2}\cdots a_{\beta^1_{n_1}}a_{\beta^2_1}a_{\beta^2_2}\cdots a_{\beta^2_{n_2}}\cdots a_{\beta^k_1}a_{\beta^k_2}\cdots a_{\beta^k_{n_k}}a_{n+1}=a_1\cdots a_{n+1}\in I_1\backslash\phi(I_1),$$ then we have $a_{\beta^2_1}a_{\beta^2_2}\cdots a_{\beta^2_{n_2}}\cdots a_{\beta^k_1}a_{\beta^k_2}\cdots a_{\beta^k_{n_k}}a_{n+1}\in P_1$. On the other hand $a_{\beta^2_1}a_{\beta^2_2}\cdots a_{\beta^2_{n_2}}\cdots a_{\beta^k_1}a_{\beta^k_2}\cdots a_{\beta^k_{n_k}}a_{n+1}\in P_2\cap\cdots\cap P_k$. Consequently 
$a_{\beta^2_1}a_{\beta^2_2}\cdots a_{\beta^2_{n_2}}\\\cdots a_{\beta^k_1}a_{\beta^k_2}\cdots a_{\beta^k_{n_k}}a_{n+1}\in \sqrt{L}$,
which is a contradiction. Similarly $a_{\beta^i_1}a_{\beta^i_2}\cdots a_{\beta^i_{n_i}}\in I_i$ for every $2\leq i\leq k$. Then
$a_1a_2\cdots a_n\in L$.\\
(2) The proof is similar to that of part (1).
\end{proof}

\begin{corollary}\label{productprime}
Let $R$ be a  ring with $1\neq0$ and let $P_1,P_2,\dots,P_n$ be prime ideals of $R$. Suppose that for every $1\leq i\leq n$,
$P_i^{t_i}$ is a $P_i$-primary ideal of $R$ where $t_i$ is a positive integer. Then $P_1^{t_1}\cap P_2^{t_2}\cap\cdots\cap P_n^{t_n}$
and $P_1^{t_1}P_2^{t_2}\cdots P_n^{t_n}$ are $n$-absorbing primary ideals of $R$. In particular, $P_1\cap P_2\cap\cdots\cap P_n$ and $P_1P_2\cdots P_n$ are $n$-absorbing primary ideals of $R$.
\end{corollary}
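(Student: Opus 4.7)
The plan is to deduce this corollary as a direct specialization of Theorem \ref{cap}. Take $\phi = \phi_{\emptyset}$ throughout, and for each $1 \leq i \leq n$ set the ideal $I_i := P_i^{t_i}$ with the associated integer $n_i := 1$. Note that $\phi_{\emptyset}$ trivially reverses inclusion because it always returns $\emptyset$, so the structural hypothesis of Theorem \ref{cap} is costless. The number $n_1 + n_2 + \cdots + n_n$ in Theorem \ref{cap} then equals $n$, exactly the absorbing index we want to obtain in the conclusion.

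Next I would check the two ideal-theoretic hypotheses of Theorem \ref{cap}. By assumption $P_i^{t_i}$ is $P_i$-primary, and a $P_i$-primary ideal is in particular primary, hence $\phi_{\emptyset}$-primary, which by Remark \ref{rem1}(1) is the same as being $\phi_{\emptyset}$-$1$-absorbing primary; so each $I_i = P_i^{t_i}$ is $\phi_{\emptyset}$-$n_i$-absorbing primary with $n_i = 1$. Similarly $\sqrt{I_i} = \sqrt{P_i^{t_i}} = P_i$ is a prime ideal, and a prime ideal is by definition a (strongly) $1$-absorbing ideal, hence $\phi_{\emptyset}$-$n_i$-absorbing. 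Thus all the hypotheses of Theorem \ref{cap} hold.

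Applying Theorem \ref{cap}(1) and (2) now yields that both $P_1^{t_1} \cap \cdots \cap P_n^{t_n}$ and $P_1^{t_1} \cdots P_n^{t_n}$ are $\phi_{\emptyset}$-$n$-absorbing primary ideals of $R$, which (unwinding $\phi_{\emptyset}$) is precisely the statement that they are $n$-absorbing primary ideals. For the final ``in particular'' assertion, it suffices to take $t_i = 1$ for every $i$, since $P_i = P_i^1$ is automatically $P_i$-primary.

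I do not expect any genuine obstacle here: the work has already been done inside Theorem \ref{cap}. The only subtlety worth flagging is the verification that $\phi_{\emptyset}$ reverses inclusion and that the auxiliary radicals $\sqrt{P_i^{t_i}}$ are indeed $\phi_{\emptyset}$-$1$-absorbing, both of which are immediate from the definitions.
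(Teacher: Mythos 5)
Your proposal is correct and matches the paper's intent exactly: the corollary is stated immediately after Theorem \ref{cap} with no separate proof, precisely because it is the specialization $\phi=\phi_{\emptyset}$, $I_i=P_i^{t_i}$, $n_i=1$ (so $n_1+\cdots+n_k=n$), with the hypotheses verified just as you do. Your check that $\phi_{\emptyset}$ reverses inclusion and that each $\sqrt{P_i^{t_i}}=P_i$ is a ($\phi_{\emptyset}$-)$1$-absorbing, i.e.\ prime, ideal is the whole content of the deduction.
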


\begin{example}
Let $R=\mathbb{Z}[X_2,X_3,\dots,X_n]+3X_1\mathbb{Z}[X_2,X_3,\dots,X_n,X_1]$. 
Set $P_i:=X_{i+1}R$ for $1\leq i\leq n-1$ and $P_n:=3X_1\mathbb{Z}[X_2,X_3,\dots,X_n,X_1]$. Note that for every $1\leq i\leq n$, $P_i$ is a prime ideal of $R$. Let $I=P_1P_2\cdots P_{n-1}P_n^{2}$. Then $3X_1^2.X_2.\cdots .X_n.3=9X_1^2X_2\cdots X_n\in I$ and $3X_1^2.X_2.\cdots .X_n=3X_1^2X_2\cdots X_n\notin I$. On the other hand $X_2.\cdots .X_n.3=3X_2\cdots X_n\notin\sqrt{I}\subseteq P_n$ and $3X_1^2.X_2.\cdots.\widehat{X_i}.\cdots. \\X_n.3=9X_1^2X_2\cdots\widehat{X_i}\cdots X_n\notin\sqrt{I}\subseteq P_{i-1}$ for every $2\leq i\leq n$. Hence $I$ is not $n$-absorbing primary.
\end{example}

In  \textrm{\cite[Example 2.7]{Bt}}, the authors offered an example to show that if $I\subset J$ such that $I$ is a 2-absorbing
primary ideal of $R$ and $\sqrt{I}=\sqrt{J}$, then $J$ need not be a 2-absorbing ideal of $R$. They considered the ideal $J=\langle XYZ,Y^3,X^3\rangle$ of the ring $R=\mathbb{Z}[X,Y,Z]$ and showed that $\sqrt{J}=\langle XY\rangle$. But 
$X\in\sqrt{J}$, which is a contradiction. Therefore their example is incorrect. In the following example we show that
if $I\subset J$ such that $I$ is a $n$-absorbing primary ideal of $R$ and $\sqrt{I}=\sqrt{J}$, then $J$ need not be a $n$-absorbing ideal of $R$.
\begin{example}\label{exam}
Let $R=K[X_1,X_2,\dots,X_{n+2}]$ where $K$ is a field. Consider the ideal
$J=\langle X_1X_2\cdots X_{n+1},X_1^2X_2\cdots X_n,X_1^2X_{n+2}\rangle$ of $R$. Then $$\hspace{-3cm}\sqrt{J}=\langle X_1X_2\cdots X_n, X_1X_{n+2}\rangle$$
$$\hspace{2.3cm}=\langle X_1\rangle\cap\langle X_2,X_{n+2}\rangle\cap\langle X_3,X_{n+2}\rangle\cap\cdots\cap\langle X_n,X_{n+2}\rangle.$$
Set $P_1=\langle X_1\rangle$ and $P_i=\langle X_i,X_{n+2}\rangle$ for every $2\leq i\leq n$. Note that $P_i$'s are prime ideals of $R$. Let
$I=P_1^{2}P_2\cdots P_n$. Then $I\subset J$ and $\sqrt{I}=\sqrt{J}=\cap_{i=1}^{n}P_i$. By Corollary \ref{productprime}, $I$ is an $n$-absorbing primary ideal of $R$, but $J$ is not an $n$-absorbing primary ideal of $R$ because $X_1X_2\cdots X_{n+1}\in J$, but 
$X_1X_2\cdots X_{n}\notin J$ and $X_2\cdots X_{n+1}\notin\sqrt{J}\subseteq\langle X_1\rangle$ and $X_1\cdots\widehat{X_i}\cdots X_{n+1}\notin\sqrt{J}\subseteq\langle X_i,X_{n+2}\rangle$ for every $2\leq i\leq n$.
\end{example}

\begin{theorem}
Let $R$ be a  ring, and let $\phi:\mathfrak{J}(R)\rightarrow \mathfrak{J}(R)\cup\{\emptyset\}$  be a function.
Suppose that $I$ is an ideal of $R$ such that $\sqrt{\phi(\sqrt{I})}\subseteq\phi(I)$. If $\sqrt{I}$ is a $\phi$-$(n-1)$-absorbing
ideal of $R$, then $I$ is a $\phi$-$n$-absorbing primary ideal of $R$.
\end{theorem}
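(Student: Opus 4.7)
The plan will be to reduce the $(n+1)$-fold product $a_1 a_2 \cdots a_{n+1}$ to an $n$-fold product by grouping the last two factors, and then invoke the $\phi$-$(n-1)$-absorbing hypothesis on $\sqrt{I}$. Concretely, I will suppose $a_1,\ldots,a_{n+1}\in R$ with $a_1 a_2 \cdots a_{n+1}\in I\setminus\phi(I)$; my goal will be to show that the product of $a_{n+1}$ with some $(n-1)$ of $a_1,\ldots,a_n$ lies in $\sqrt{I}$, which verifies the second clause of the $\phi$-$n$-absorbing primary condition (so the first clause need not be addressed separately).

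Setting $b_i := a_i$ for $1\leq i\leq n-1$ and $b_n := a_n a_{n+1}$, I will have $b_1 b_2\cdots b_n = a_1\cdots a_{n+1}\in I\subseteq\sqrt{I}$. The key step is to verify that $b_1\cdots b_n\notin\phi(\sqrt{I})$: otherwise $a_1\cdots a_{n+1}\in\phi(\sqrt{I})\subseteq\sqrt{\phi(\sqrt{I})}\subseteq\phi(I)$ by hypothesis, contradicting $a_1\cdots a_{n+1}\notin\phi(I)$. With this in hand, the $\phi$-$(n-1)$-absorbing property of $\sqrt{I}$ applies to the $n$ elements $b_1,\ldots,b_n$ and delivers some $(n-1)$ of them whose product lies in $\sqrt{I}$.

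A short case analysis on the omitted index will then finish the argument. If $b_n$ is the omitted factor, then $a_1\cdots a_{n-1}\in\sqrt{I}$, so $a_1\cdots a_{n-1} a_{n+1}\in\sqrt{I}$, expressing $a_{n+1}$ times $(n-1)$ of $a_1,\ldots,a_n$ (all except $a_n$). Otherwise $b_j = a_j$ is omitted for some $1\leq j\leq n-1$, giving $a_1\cdots\widehat{a_j}\cdots a_{n-1}\cdot a_n a_{n+1}\in\sqrt{I}$, again a product of $a_{n+1}$ with $(n-1)$ of $a_1,\ldots,a_n$ (all except $a_j$). The only delicate point in the whole proof is the verification that $a_1\cdots a_{n+1}\notin\phi(\sqrt{I})$, which is precisely where the technical assumption $\sqrt{\phi(\sqrt{I})}\subseteq\phi(I)$ is used to transfer non-membership in $\phi(I)$ to non-membership in $\phi(\sqrt{I})$; the rest is bookkeeping.
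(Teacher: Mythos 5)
Your proof is correct, and it takes a genuinely different (and in fact cleaner) route than the paper's. The paper multiplies \emph{each} of $a_1,\dots,a_n$ by $a_{n+1}$ and applies the $\phi$-$(n-1)$-absorbing hypothesis to the $n$ elements $a_1a_{n+1},\dots,a_na_{n+1}$, whose product is $(a_1\cdots a_n)a_{n+1}^n=(a_1\cdots a_{n+1})a_{n+1}^{n-1}$; you instead group only the last two factors, taking $b_i=a_i$ for $i\le n-1$ and $b_n=a_na_{n+1}$, so that $b_1\cdots b_n$ equals $a_1\cdots a_{n+1}$ on the nose. This buys you two simplifications. First, your verification that the product avoids $\phi(\sqrt{I})$ is immediate from $\phi(\sqrt{I})\subseteq\sqrt{\phi(\sqrt{I})}\subseteq\phi(I)$, whereas the paper's product is only a multiple of $a_1\cdots a_{n+1}$, so justifying $(a_1\cdots a_{n+1})a_{n+1}^{n-1}\notin\phi(\sqrt{I})$ really requires an extra power argument (e.g.\ $(a_1\cdots a_{n+1})^n=(a_1\cdots a_n)^{n-1}\cdot(a_1\cdots a_{n+1})a_{n+1}^{n-1}$, putting $a_1\cdots a_{n+1}$ in $\sqrt{\phi(\sqrt{I})}\subseteq\phi(I)$) that the paper elides. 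Second, the paper must afterwards extract a root to pass from $(a_1\cdots\widehat{a_i}\cdots a_n)a_{n+1}^{n-1}\in\sqrt{I}$ to $a_1\cdots\widehat{a_i}\cdots a_na_{n+1}\in\sqrt{I}$, while your two cases land directly on a product of the required form (at the mild cost of an asymmetric case split on whether $b_n$ is the omitted factor). Your observation that the second disjunct of the definition is established unconditionally, so the hypothesis $a_1\cdots a_n\notin I$ is never needed, is also correct.
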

\begin{proof}
Let $\sqrt{I}$ be $\phi$-$(n-1)$-absorbing. Assume that $a_1a_2\cdots a_{n+1}\in I\backslash\phi(I)$ for some $a_1,a_2,\dots,a_{n+1}\in R$ and $a_1a_2\cdots a_{n}\notin I$. Since $$(a_1a_{n+1})(a_2a_{n+1})\cdots(a_na_{n+1})=(a_1a_2\cdots a_{n})a_{n+1}^n\in I\subseteq\sqrt{I}\backslash\phi(\sqrt{I}).$$
Then for some $1\leq i\leq n$, $$(a_1a_{n+1})\cdots\widehat{(a_ia_{n+1})}\cdots(a_na_{n+1})=(a_1\cdots\widehat{a_i}\cdots a_{n})a_{n+1}^{n-1}\in\sqrt{I},$$
and so $a_1\cdots\widehat{a_i}\cdots a_{n}a_{n+1}\in\sqrt{I}.$ Consequently $I$ is $\phi$-$n$-absorbing primary.
\end{proof}

The following example gives an ideal $J$ of a ring $R$ where $\sqrt{J}$ is an $n$-absorbing ideal of $R$, but $J$
is not an $n$-absorbing primary ideal of $R$.
\begin{example}
Let $R=K[X_1,X_2,\dots,X_{n+2}]$ where $K$ is a field and let
$J=\langle X_1X_2\cdots X_{n+1},X_1^2X_2\cdots X_n,X_1^2X_{n+2}\rangle$. Then $$\sqrt{J}=\langle X_1\rangle\cap\langle X_2,X_{n+2}\rangle\cap\langle X_3,X_{n+2}\rangle\cap\cdots\cap\langle X_n,X_{n+2}\rangle.$$
By \textrm{\cite[Theorem 2.1(c)]{AB1}}, $\sqrt{J}$ is an $n$-absorbing ideal of $R$, but $J$ is not an $n$-absorbing primary ideal of $R$ as it is shown in Example \ref{exam}.
\end{example}

We know that if $I$ is an ideal of a ring $R$ such that $\sqrt{I}$ is a maximal ideal of $R$, then $I$ is a primary ideal of $R$.
\begin{theorem} 
Let $I$ be an ideal of a ring $R$. If $\sqrt{I}=M_1\cap M_2\cap\cdots\cap M_n$ where $M_i$'s are maximal ideals of $R$, then 
$I$ is an $n$-absorbing primary ideal of $R$.
\end{theorem}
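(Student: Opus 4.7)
The plan is to split into two cases based on whether $a_{n+1}$ lies in $\bigcup_{j=1}^{n}M_j$ or not. First I would reduce to the case that $M_1,\dots,M_n$ are pairwise distinct; if some coincide then $\sqrt{I}$ is an intersection of $m<n$ distinct maximals, and once the theorem is established at this smaller level, Remark~\ref{rem1}(2) promotes ``$m$-absorbing primary'' to ``$n$-absorbing primary''. Fix $a_1,\dots,a_{n+1}\in R$ with $a_1\cdots a_{n+1}\in I$; the goal is to verify one of the two alternatives in the definition of $n$-absorbing primary.

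\textbf{Case A: $a_{n+1}\notin M_j$ for every $j$.} Because the maximal ideals of $R/\sqrt{I}$ are precisely the $M_j/\sqrt{I}$, the class of $a_{n+1}$ is a unit in $R/\sqrt{I}$, so there exists $v\in R$ with $a_{n+1}v-1\in\sqrt{I}$, whence $(a_{n+1}v-1)^m\in I$ for some $m\geq 1$. Expanding the binomial produces an element $W\in R$ such that $1+a_{n+1}vW\in I$, i.e.\ $a_{n+1}vW\equiv -1\pmod{I}$. Multiplying the hypothesis $a_1\cdots a_{n+1}\in I$ by $vW$ and replacing $a_{n+1}vW$ by $-1$ modulo $I$ forces $a_1\cdots a_n\in I$, which is option~(i) of the conclusion.

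\textbf{Case B: $a_{n+1}\in M_j$ for some $j$.} Set $S=\{j\in\{1,\dots,n\}:a_{n+1}\in M_j\}\neq\emptyset$ and, for each $j\notin S$, put $T_j=\{l\in\{1,\dots,n\}:a_l\in M_j\}$, which is non-empty because $M_j$ is prime and contains the product $a_1\cdots a_{n+1}$ but not $a_{n+1}$. I will aim to locate an index $k\in\{1,\dots,n\}$ with $T_j\not\subseteq\{k\}$ for every $j\notin S$, because then $a_{n+1}\cdot a_1\cdots\widehat{a_k}\cdots a_n$ sits in every $M_j$---in those with $j\in S$ automatically through $a_{n+1}$, and in those with $j\notin S$ through some chosen $a_l$ with $l\in T_j\setminus\{k\}$---hence in $\sqrt{I}$, giving option~(ii). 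If no such $k$ existed then, for each $k$, one could pick some $j_k\notin S$ with $T_{j_k}=\{k\}$; the map $k\mapsto j_k$ would send the $n$-element set $\{1,\dots,n\}$ into $\{1,\dots,n\}\setminus S$, which has size at most $n-1$, so pigeonhole produces $k\neq k'$ with $j_k=j_{k'}=j$, contradicting $T_j=\{k\}=\{k'\}$.

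I expect Case~A to be the delicate step: upgrading ``$a_{n+1}$ is a unit modulo $\sqrt{I}$'' to a usable identity modulo $I$ hinges on the nilpotence of $\sqrt{I}/I$ together with a careful binomial manipulation. Case~B then reduces to a clean pigeonhole count on which maximal ideal absorbs which of the $a_i$'s, with the extreme subcase $S=\{1,\dots,n\}$ (so that $a_{n+1}\in\sqrt{I}$ already) handled trivially.
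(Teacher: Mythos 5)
Your proof is correct, but it follows a genuinely different route from the paper's. The paper argues by assuming $a_1\cdots\widehat{a_i}\cdots a_{n+1}\notin\sqrt{I}$ for all $i$, assigning to each such product a maximal ideal $M_i$ it avoids, writing $m_i+r_i(a_1\cdots\widehat{a_i}\cdots a_{n+1})=1$ with $m_i\in M_i$, multiplying these $n$ relations together, and using that a power of $m_1\cdots m_n\in\sqrt{I}$ lies in $I$ to conclude $a_1\cdots a_n\in I$; this hinges on a ``without loss of generality'' matching of products to distinct maximal ideals. You instead split on whether $a_{n+1}$ lies outside every $M_j$: in that case you lift invertibility of $a_{n+1}$ modulo $\sqrt{I}$ to the relation $1\in a_{n+1}R+I$ via the binomial expansion of $(a_{n+1}v-1)^m\in I$ and deduce $a_1\cdots a_n\in I$ directly, while in the remaining case you run a purely combinatorial pigeonhole argument on the sets $T_j$ to produce an index $k$ with $a_1\cdots\widehat{a_k}\cdots a_{n+1}\in\bigcap_j M_j=\sqrt{I}$. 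Both proofs rest on the same two facts---the only maximal ideals of $R$ containing $\sqrt{I}$ are the $M_j$, and elements of $\sqrt{I}$ have powers in $I$---but your version replaces the paper's single comaximality computation (and its somewhat delicate reindexing step) by a clean dichotomy: a short ring-theoretic computation when $a_{n+1}$ is a unit modulo $\sqrt{I}$, and a counting argument otherwise. One phrasing quibble: $\sqrt{I}/I$ need not be nilpotent as an ideal, but your argument only uses that the single element $a_{n+1}v-1\in\sqrt{I}$ has a power in $I$, which is exactly what the radical gives, so nothing is lost.
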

\begin{proof}
Let $a_1a_2\cdots a_{n+1}\in I$ for some $a_1,a_2,\dots,a_{n+1}\in R$ such that $a_1\cdots\widehat{a_i}\cdots a_{n+1}\\\notin\sqrt{I}$ for every $1\leq i\leq n$. If for some $1\leq i\leq n$, $a_1\cdots\widehat{a_i}\cdots a_{n+1}\in M_j$ (for every $1\leq j\leq n$), then $a_1\cdots\widehat{a_i}\cdots a_{n+1}\in \sqrt{I}$ and so we are done. Without loss of generality we may assume that for every $1\leq i\leq n$,
$a_1\cdots\widehat{a_i}\cdots a_{n+1}\notin M_i$, respectively. Since $M_i$'s are maximal, then 
$M_i+R(a_1\cdots\widehat{a_i}\cdots a_{n+1})=R$ for every $1\leq i\leq n$. Therefore  for every $1\leq i\leq n$
there are $m_i\in M_i$ and $r_i\in R$ such that  $m_i+r_i(a_1\cdots\widehat{a_i}\cdots a_{n+1})=1$. So
$$m_1m_2\cdots m_n+
\sum_{t=1}^{n}\sum^{n-t+1}_{\substack{\alpha_1=1\\\ \alpha_1<\alpha_2<\\\cdots<\alpha_t\leq n}} [r_{\alpha_1}r_{\alpha_2}\cdots r_{\alpha_t}(m_1\cdots\widehat{m_{\alpha_1}}\cdots\widehat{m_{\alpha_2}}\cdots\widehat{m_{\alpha_t}}\cdots m_n)$$
$$\hspace{-2.8cm}\prod_{i=1}^{t}(a_1\cdots\widehat{a_{\alpha_i}}\cdots a_{n+1})]=1$$
Since $m_1m_2\cdots m_n\in \sqrt{I}$, hence $(m_1m_2\cdots m_n)^t\in I$ for some $t\geq 1$. Thus 
$$(m_1m_2\cdots m_n)^t+s[\sum_{t=1}^{n}\sum^{n-t+1}_{\substack{\alpha_1=1\\\ \alpha_1<\alpha_2<\\\cdots<\alpha_t\leq n}} [r_{\alpha_1}r_{\alpha_2}\cdots r_{\alpha_t}(m_1\cdots\widehat{m_{\alpha_1}}\cdots\widehat{m_{\alpha_2}}\cdots\widehat{m_{\alpha_t}}\cdots m_n)$$
$$\hspace{-2.1cm}\prod_{i=1}^{t}(a_1\cdots\widehat{a_{\alpha_i}}\cdots a_{n+1})]]=1$$ for some $s\in R$. Multiply $a_1a_2\cdots a_n$ on both
sides to get 
$$\hspace{-5cm}a_1a_2\cdots a_n=a_1a_2\cdots a_n(m_1m_2\cdots m_n)^t+$$
$$\hspace{2.5cm}s[\sum_{t=1}^{n}\sum^{n-t+1}_{\substack{\alpha_1=1\\\ \alpha_1<\alpha_2<\\\cdots<\alpha_t\leq n}} [r_{\alpha_1}r_{\alpha_2}\cdots r_{\alpha_t}(m_1\cdots\widehat{m_{\alpha_1}}\cdots\widehat{m_{\alpha_2}}\cdots\widehat{m_{\alpha_t}}\cdots m_n)$$
$$\hspace{-.8cm}(a_1a_2\cdots a_n)\prod_{i=1}^{t}(a_1\cdots\widehat{a_{\alpha_i}}\cdots a_{n+1})]]\in I.$$
Hence $I$ is an $n$-absorbing primary ideal.
\end{proof}

Let $R$ be an integral domain with $1\neq0$ and let $K$ be the quotient field of $R$. A nonzero ideal $I$ of $R$ is said to be {\it invertible} if $II^{-1}=R$, where $I^{-1}=\{x\in K\mid xI\subseteq R\}$. An integral domain $R$ is said to be a {\it Dedekind domain} if every
nonzero proper ideal of $R$ is invertible.

\begin{theorem}\label{dedekind}
Let $R$ be a Noetherian integral domain with $1\neq0$ that is not a field . The following conditions are equivalent: 
\begin{enumerate}
\item $R$ is a Dedekind domain;
\item A nonzero proper ideal $I$ of $R$ is an $n$-absorbing primary ideal of $R$ if and only if $I=M_1^{t_1}M_2^{t_2}\cdots M_i^{t_i}$
for some $1\leq i\leq n$ and some distinct maximal ideals $M_1,M_2,\dots,M_i$ of $R$ and some positive integers $t_1,t_2,\dots,t_i$;
\item If $I$ is a nonzero $n$-absorbing primary ideal of $R$, then $I=M_1^{t_1}M_2^{t_2}\cdots M_i^{t_i}$
for some $1\leq i\leq n$ and some distinct maximal ideals $M_1,M_2,\dots, M_i$ of $R$ and some positive integers $t_1,t_2,\dots,t_i$;
\item A nonzero proper ideal $I$ of $R$ is an $n$-absorbing primary ideal of $R$ if and only if $I=P_1^{t_1}P_2^{t_2}\cdots P_i^{t_i}$
for some $1\leq i\leq n$ and some distinct prime ideals $P_1,P_2,\dots,P_i$ of $R$ and some positive integers $t_1,t_2,\dots,t_i$;
\item If $I$ is a nonzero $n$-absorbing primary ideal of $R$, then $I=P_1^{t_1}P_2^{t_2}\cdots P_i^{t_i}$
for some $1\leq i\leq n$ and some distinct prime ideals $P_1,P_2,\dots,P_i$ of $R$ and some positive integers $t_1,t_2,\dots,t_i$.
\end{enumerate}
\end{theorem}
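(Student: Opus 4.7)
The plan is to prove the two cycles $(1)\Rightarrow(2)\Rightarrow(3)\Rightarrow(1)$ and $(1)\Rightarrow(4)\Rightarrow(5)\Rightarrow(1)$, noting that $(2)\Rightarrow(3)$ and $(4)\Rightarrow(5)$ are immediate. For $(1)\Rightarrow(2)$ (and $(1)\Rightarrow(4)$ analogously, since nonzero primes in a Dedekind domain are maximal), I will use unique factorization in a Dedekind domain: each nonzero proper ideal has the form $M_1^{t_1}\cdots M_k^{t_k}$ with distinct maximals. If $I$ is $n$-absorbing primary, Corollary~\ref{minimal} forces $\sqrt{I}$ to be the intersection of at most $n$ primes minimal over $I$ (namely the $M_j$ in the factorization), giving $k\leq n$. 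Conversely, each $M_j^{t_j}$ is $M_j$-primary, so Corollary~\ref{productprime} shows $M_1^{t_1}\cdots M_i^{t_i}$ is $i$-absorbing primary, and Remark~\ref{rem1}(2) promotes it to $n$-absorbing primary.

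The central step is $(3)\Rightarrow(1)$. A nonzero prime $P$ is primary and hence $n$-absorbing primary, so by (3), $P = M_1^{t_1}\cdots M_i^{t_i}$. Taking radicals and using the comaximality of distinct maximals gives $P = M_1\cap\cdots\cap M_i$; primality of $P$ combined with the pairwise incomparability of distinct maximals then forces $i=1$, $t_1=1$, and $P=M_1$. Hence every nonzero prime is maximal and $\dim R = 1$. The same argument applied to any primary $Q$ (whose radical is prime) collapses its factorization to $Q=M^t$, so every nonzero primary ideal is a prime power. A general nonzero ideal $I$ then has primary decomposition $I=Q_1\cap\cdots\cap Q_r$ with $Q_j=M_j^{t_j}$; distinct maximals being pairwise comaximal, the Chinese remainder theorem yields $I=M_1^{t_1}\cdots M_r^{t_r}$. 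Thus every nonzero ideal is a product of prime ideals, and $R$ is a Dedekind domain.

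The implication $(5)\Rightarrow(1)$ proceeds similarly but must accommodate primes that need not be pairwise incomparable. Given a primary ideal $Q$ with $\sqrt{Q}=P$, (5) writes $Q = P_1^{t_1}\cdots P_i^{t_i}$; taking radicals and using primality of $P$ forces some $P_j=P$ with $P\subseteq P_k$ for all $k$. For $k\neq j$, $P_k\not\subseteq\sqrt{Q}$, so the primary property of $Q$ yields the cancellation $(Q:_R P_k)=Q$. Iterating this strips each $P_k^{t_k}$ with $k\neq j$ off the product, collapsing it to $Q=P_j^{t_j}$; hence every nonzero primary of $R$ is a prime power. Localizing at any nonzero prime $P$ of $R$, the Noetherian local domain $R_P$ inherits the property that every $\mathfrak{m}$-primary ideal ($\mathfrak{m}=PR_P$) is a power of $\mathfrak{m}$. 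For each $a\in\mathfrak{m}$, the $\mathfrak{m}$-primary ideal $\mathfrak{m}^2+(a)$ equals either $\mathfrak{m}$ or $\mathfrak{m}^2$; if it were $\mathfrak{m}^2$ for every choice of $a$, then $\mathfrak{m}=\mathfrak{m}^2$ and Nakayama would give $\mathfrak{m}=0$, a contradiction. Hence some $a\in\mathfrak{m}$ satisfies $\mathfrak{m}=(a)+\mathfrak{m}^2$, and another application of Nakayama yields $\mathfrak{m}=(a)$, making $R_P$ a DVR. Since this holds at every nonzero prime, $R$ is Dedekind.

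The hard part will be the $(5)\Rightarrow(1)$ direction: carefully collapsing the factorization via the cancellation $(Q:_R P_k)=Q$ and then transferring the prime-power-primary property through localization to extract a principal generator of $\mathfrak{m}$ via the sandwich $\mathfrak{m}^2\subseteq\mathfrak{m}^2+(a)\subseteq\mathfrak{m}$ together with Nakayama.
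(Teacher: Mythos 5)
Your proposal is correct, and the forward directions $(1)\Rightarrow(2)$, $(1)\Rightarrow(4)$ coincide with the paper's (factorization in a Dedekind domain, Corollary \ref{minimal} to bound the number of factors, Corollary \ref{productprime} for the converse). Where you genuinely diverge is in closing the cycle. The paper proves only $(5)\Rightarrow(1)$, and does so by a short appeal to an external criterion: for a maximal ideal $M$ and any $I$ with $M^2\subsetneq I\subsetneq M$, the ideal $I$ is $M$-primary, hence $n$-absorbing primary, hence by (5) a product of prime powers with radical $M$, which forces $I$ to be a power of $M$ --- a contradiction; Gilmer's Theorem 39.2 (no ideals strictly between $M^2$ and $M$ implies Dedekind) then finishes. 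You instead give two self-contained arguments: for $(3)\Rightarrow(1)$ you show $\dim R=1$ and that every primary ideal is a prime power, then assemble a prime factorization of an arbitrary ideal via primary decomposition and the Chinese remainder theorem, invoking the classical fact that a domain in which every nonzero ideal is a product of primes is Dedekind; for $(5)\Rightarrow(1)$ you collapse the factorization of a primary ideal $Q$ to a single prime power via the cancellation $(Q:_RP_k)=Q$ (valid since $P_k\nsubseteq\sqrt{Q}$ and $Q$ is primary, so $Q=P_1^{t_1}\cdots P_i^{t_i}\subseteq P_j^{t_j}\subseteq(Q:_R\prod_{k\neq j}P_k^{t_k})=Q$), and then show each $R_P$ is a DVR by the Nakayama argument on $\mathfrak{m}^2+(a)$. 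Both routes are sound; the paper's is shorter but leans on a citation, while yours is longer, elementary, and makes the structural content (dimension one, primary ideals are prime powers, local DVR property) explicit. The only points worth tightening in a write-up are the justification that $J=(J\cap R)R_P$ when transferring the prime-power property to $R_P$, and the observation that $\mathfrak{m}^2+(a)$, being sandwiched between $\mathfrak{m}^2$ and $\mathfrak{m}$, cannot equal $\mathfrak{m}^s$ for $s\geq 3$ without Nakayama forcing $\mathfrak{m}=0$; neither is a gap.
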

\begin{proof}
(1)$\Rightarrow$(2) Assume that $R$ is a Dedekind domain that is not a field. Then every nonzero prime ideal of $R$ is maximal.
Let $I$ be a nonzero $n$-absorbing primary ideal of $R$. Since $R$ is a Dedekind domain, then there are distinct maximal ideals $M_1,M_2,\dots, M_i$ of $R$ ($k\geq 1$) such that $I=M_1^{t_1}M_2^{t_2}\cdots M_i^{t_i}$ in which $t_j$'s are positive integers. Therefore $\sqrt{I}=M_1\cap M_2\cap\cdots\cap M_i$. Since $I$ is $n$-absorbing primary and every prime ideal of $R$ is maximal, then $\sqrt{I}$ is the intersection of at most $n$ maximal ideals of $R$, by Corollary \ref{minimal}. So $i\leq n$.\\ Conversely, suppose that $I=M_1^{t_1}M_2^{t_2}\cdots M_i^{t_i}$
for some $1\leq i\leq n$ and some distinct maximal ideals $M_1,M_2,\dots,M_i$ of $R$ and some positive integers $t_1,t_2,\dots,t_i$. Then
$I$ is $n$-absorbing primary, by Corollary \ref{productprime}.\\
(1)$\Rightarrow$(4) The proof is similar to that of (1)$\Rightarrow$(2).\\
(2)$\Rightarrow$(3),  (3)$\Rightarrow$(5) and (4)$\Rightarrow$(5) are evident.\\
(5)$\Rightarrow$(1) Let $M$ be an arbitrary maximal ideal of $R$ and $I$ be an ideal of $R$ such that
$M^2\subsetneq I\subsetneq M$. Hence $\sqrt{I}=M$ and so $I$ is $M$-primary. Then $I$ is $n$-absorbing primary, and thus by part (5)
we have that $I=P_1^{t_1}P_2^{t_2}\cdots P_i^{t_i}$ for some $1\leq i\leq n$ and some distinct prime ideals $P_1,P_2,\dots,P_i$ of $R$ and some positive integers $t_1,t_2,\dots,t_i$. Then $\sqrt{I}=P_1\cap P_2\cap\cdots\cap P_i=M$ which shows that $I$ is a power of $M$, a contradiction. Therefore, there are no ideals properly between $M^2$ and $M$. Consequently $R$ is a Dedekind domain, by \textrm{\cite[Theorem 39.2, p. 470]{G}}.
\end{proof}

Since every principal ideal domain is a Dedekind domain, we have the following result as a consequence of Theorem \ref{dedekind}.
\begin{corollary}
Let $R$ be a principal ideal domain and $I$ be a nonzero proper
ideal of $R$. Then $I$ is an $n$-absorbing primary ideal of $R$ if and only if
$I=R(p_1^{t_1}p_2^{t_2}\cdots p_i^{t_i})$, where $p_j$'s are prime elements of $R$, $1\leq i\leq n$ and $t_j$'s are some integers.
\end{corollary}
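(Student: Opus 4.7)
The plan is to derive this as an immediate corollary of Theorem \ref{dedekind}, using the well-known fact that every principal ideal domain is a Dedekind domain and that in a PID the nonzero prime (equivalently maximal) ideals are exactly the ideals generated by prime elements.

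First I would invoke the equivalence (1)$\Leftrightarrow$(2) in Theorem \ref{dedekind} for the PID $R$: a nonzero proper ideal $I$ of $R$ is $n$-absorbing primary if and only if $I=M_1^{t_1}M_2^{t_2}\cdots M_i^{t_i}$ for some $1\leq i\leq n$, distinct maximal ideals $M_1,\dots,M_i$, and positive integers $t_1,\dots,t_i$. (Strictly speaking, if $R$ is a field the claim is vacuous since there are no nonzero proper ideals; otherwise we are in the setting of the theorem.)

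Next I would translate the ideal-theoretic description into the element-theoretic one. Since $R$ is a PID, every nonzero maximal ideal is principal and generated by a prime element of $R$; so write $M_j=Rp_j$ for some prime $p_j\in R$. Distinctness of the maximal ideals $M_1,\dots,M_i$ is equivalent to the primes $p_1,\dots,p_i$ being pairwise non-associate. Then $M_j^{t_j}=Rp_j^{t_j}$, and multiplying principal ideals gives
\[
I=M_1^{t_1}M_2^{t_2}\cdots M_i^{t_i}=R(p_1^{t_1}p_2^{t_2}\cdots p_i^{t_i}),
\]
which is exactly the stated form. The converse direction is obtained by reversing this translation: given $I=R(p_1^{t_1}\cdots p_i^{t_i})$ with $1\leq i\leq n$ and $p_j$ pairwise non-associate primes, the ideals $M_j:=Rp_j$ are distinct maximal ideals, so $I=M_1^{t_1}\cdots M_i^{t_i}$ falls under Theorem \ref{dedekind}(2) and is $n$-absorbing primary.

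There is essentially no obstacle here; the only point worth stating carefully is that ``distinct maximal ideals'' and ``pairwise non-associate prime elements'' correspond bijectively in a PID, so the two formulations of the factorization agree.
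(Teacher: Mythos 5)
Your proposal is correct and is exactly the route the paper takes: the corollary is derived directly from Theorem \ref{dedekind}(1)$\Leftrightarrow$(2) via the fact that a PID is a Dedekind domain, with the translation between distinct maximal ideals and pairwise non-associate prime elements left implicit in the paper and spelled out by you.
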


The following example shows that an $n$-absorbing primary ideal of a ring $R$ need not be of the form
$P_1^{t_1}P_2^{t_2}\cdots P_i^{t_i}$, where $P_j$'s are prime ideals of $R$, $1\leq i\leq n$ and $t_j$'s are some integers.
\begin{example}
Let $R=K[X_1,X_2,\dots,X_n]$ where $K$ is a field and let $I=\langle X_1,X_2,\dots,X_{n-1},X_n^2\rangle$. Since $I$ is
$\langle X_1,X_2,\dots,X_n\rangle$-primary, then $I$ is an $n$-absorbing primary ideal of $R$. But $I$ is not in the form 
of $P_1^{t_1}P_2^{t_2}\cdots P_i^{t_i}$, where $P_j$'s are prime ideals of $R$, $1\leq i\leq n$ and $t_j$'s are some integers.
\end{example}

\begin{theorem}
Let $R$ be a ring, $a\in R$ a nonunit and $m\geq2$ a positive integer.
 If $(0:_Ra)\subseteq\langle a\rangle$, then $\langle a\rangle$ is $\phi$-$n$-absorbing primary, for some $\phi$ 
with $\phi\leq\phi_m$ if and only if $\langle a\rangle$ is $n$-absorbing primary.
\end{theorem}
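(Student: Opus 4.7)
The ``if'' direction is immediate: any $n$-absorbing primary ideal is $\phi$-$n$-absorbing primary for every choice of $\phi$. For the converse, I set $I = \langle a \rangle$ and suppose $a_1 a_2 \cdots a_{n+1} \in I$ for some $a_1,\dots,a_{n+1}\in R$; the aim is to show that either $a_1 a_2 \cdots a_n \in I$ or the product of $a_{n+1}$ with $(n-1)$ of $a_1,\dots,a_n$ lies in $\sqrt{I}$. If $a_1\cdots a_{n+1}\notin\phi(I)$, the $\phi$-$n$-absorbing primary hypothesis finishes the argument at once, so the entire difficulty lies in the case $a_1 \cdots a_{n+1}\in\phi(I)\subseteq I^{m}=\langle a^{m}\rangle$.

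My plan for this case is a perturbation trick: replace $a_{n+1}$ by $a_{n+1}+a$ and look at
$$x:=a_1\cdots a_n(a_{n+1}+a)=a_1\cdots a_{n+1}+a\cdot a_1\cdots a_n\in I.$$
If $x\notin\phi(I)$, applying the $\phi$-$n$-absorbing primary property to the tuple $(a_1,\dots,a_n,a_{n+1}+a)$ yields either $a_1\cdots a_n\in I$ (and we are done) or $(a_{n+1}+a)\cdot P\in\sqrt{I}$ for some product $P$ of $(n-1)$ of the $a_i$'s; expanding and using $aP\in\sqrt{I}$ (because $a\in\sqrt{I}$) gives $a_{n+1}P\in\sqrt{I}$, as required.

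The remaining subcase is $x\in\phi(I)$. Subtracting yields $a\cdot a_1\cdots a_n=x-a_1\cdots a_{n+1}\in\phi(I)\subseteq\langle a^{m}\rangle$, so $a\cdot a_1\cdots a_n=a^{m}s$ for some $s\in R$. Rearranging gives $a(a_1\cdots a_n-a^{m-1}s)=0$, and the hypothesis $(0:_R a)\subseteq\langle a\rangle$ forces $a_1\cdots a_n-a^{m-1}s\in\langle a\rangle$. Since $m\geq 2$ we have $a^{m-1}s\in\langle a\rangle$, so $a_1\cdots a_n\in I$, completing the proof.

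The step I expect to be the main obstacle is this last subcase: when both the original product and the perturbed product lie in $\phi(I)$ the $\phi$-$n$-absorbing primary hypothesis gives no direct information, and everything must be extracted from the containment $\phi(I)\subseteq I^{m}$ combined with $(0:_R a)\subseteq\langle a\rangle$. It is reassuring that these two hypotheses are exactly what is required to convert the cancellation $a\cdot(\,\cdot\,)=0$ into membership in $\langle a\rangle$, and that the argument genuinely uses both $m\geq 2$ and the annihilator assumption.
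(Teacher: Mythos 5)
Your proposal is correct and follows essentially the same route as the paper: the same perturbation of the last factor by $a$, the same case split on whether the original and perturbed products land in $\phi(I)\subseteq\langle a^m\rangle$, and the same final step converting $a\cdot a_1\cdots a_n\in\langle a^m\rangle$ into $a_1\cdots a_n\in\langle a\rangle$ via $(0:_Ra)\subseteq\langle a\rangle$ and $m\geq 2$. The only cosmetic difference is that the paper first reduces to $\phi=\phi_m$, whereas you carry the general $\phi\leq\phi_m$ through directly.
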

\begin{proof}
We may assume that $\langle a\rangle$ is $\phi_m$-$n$-absorbing primary. Let $x_1x_2\cdots x_{n+1}\in\langle a\rangle$ for some $x_1,x_2,\dots,x_{n+1}\in R$. If $x_1x_2\cdots x_{n+1}\notin\langle a^m\rangle$, then either $x_1x_2\cdots x_{n}\in\langle a\rangle$ or $x_1\cdots\widehat{x_i}\cdots x_{n+1}\in\sqrt{\langle a\rangle}$ for some $1\leq i\leq n$. Therefore, assume that  
$x_1x_2\cdots x_{n+1}\in\langle a^m\rangle$. Hence $x_1x_2\cdots x_n(x_{n+1}+a)\in\langle a\rangle$. If $x_1x_2\cdots x_n(x_{n+1}+a)\notin\langle a^m\rangle$,
then either $x_1x_2\cdots x_n\in\langle a\rangle$ or $x_1\cdots\widehat{x_i}\cdots x_n(x_{n+1}+a)\in\sqrt{\langle a\rangle}$
for some $1\leq i\leq n$. So, either $x_1x_2\cdots x_n\in\langle a\rangle$ or $x_1\cdots\widehat{x_i}\cdots x_{n+1}\in\sqrt{\langle a\rangle}$ for some $1\leq i\leq n$. Hence, suppose that $x_1x_2\cdots x_n(x_{n+1}+a)\in\langle a^m\rangle$. Thus 
$x_1x_2\cdots x_{n+1}\in\langle a^m\rangle$ implies that $x_1x_2\cdots x_n a\in\langle a^m\rangle$. Therefore,
there exists $r\in R$ such that $x_1x_2\cdots x_n-ra^{m-1}\in(0:_{R}a)\subseteq\langle a\rangle$. Consequently $x_1x_2\cdots x_n\in\langle a\rangle$.
\end{proof}

\begin{corollary}
Let $R$ be an integral domain, $a\in R$ a nonunit element and $m\geq2$ a positive integer. Then $\langle a\rangle$ is $\phi$-$n$-absorbing primary, for some $\phi$ with $\phi\leq\phi_m$ if and only if $\langle a\rangle$ is $n$-absorbing primary.
\end{corollary}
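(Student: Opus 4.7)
The plan is to invoke the preceding theorem directly: the present corollary differs from that theorem only in replacing the annihilator hypothesis $(0:_R a) \subseteq \langle a \rangle$ by the stronger assumption that $R$ is an integral domain, so my only task is to verify that this annihilator condition is automatic in the integral-domain setting.

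First I would dispose of the main case $a \neq 0$. Any $x \in (0:_R a)$ satisfies $xa = 0$, which forces $x = 0$ since a domain has no nonzero zero divisors; hence $(0:_R a) = (0) \subseteq \langle a \rangle$ and the preceding theorem applies verbatim to yield the equivalence. Next I would handle the degenerate case $a = 0$ (which is a nonunit because $1 \neq 0$): here $\langle a \rangle = (0)$ is a prime ideal of the domain $R$, hence $n$-absorbing primary, and for any $\phi$ satisfying the standing assumption $\phi(I) \subseteq I$ we have $\phi((0)) = (0)$, making $(0) \setminus \phi((0))$ empty and $(0)$ vacuously $\phi$-$n$-absorbing primary; thus both sides of the equivalence hold trivially in this case.

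I anticipate essentially no obstacle. The corollary is a clean specialization of the preceding theorem, and the only substantive point is the brief independent verification needed for the edge case $a = 0$.
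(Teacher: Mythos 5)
Your proposal is correct and is exactly the intended derivation: in a domain $(0:_Ra)=(0)\subseteq\langle a\rangle$ for $a\neq 0$, so the preceding theorem applies, and the $a=0$ case is trivial since $(0)$ is prime (hence $n$-absorbing primary) in a domain. The only nitpick is that $\phi((0))$ could be $\emptyset$ rather than $(0)$, but this is harmless since primeness of $(0)$ already gives both directions in that case.
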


\begin{theorem}
Let $V$ be a valuation domain and $n$ be a natural number. Suppose that $I$ is an ideal of $V$ such that $I^{n+1}$ is not principal.
Then $I$ is a $\phi_{n+1}$-$n$-absorbing primary if and only if it is $n$-absorbing primary.
\end{theorem}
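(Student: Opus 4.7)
The backward direction is immediate: every $n$-absorbing primary ideal is $\phi$-$n$-absorbing primary for any function $\phi$. For the forward direction I would argue by contradiction. Suppose $I$ is $\phi_{n+1}$-$n$-absorbing primary but not $n$-absorbing primary, so there exist $a_1, \ldots, a_{n+1} \in V$ with $a_1 \cdots a_{n+1} \in I$, $a_1 \cdots a_n \notin I$, and $a_1 \cdots \widehat{a_i} \cdots a_{n+1} \notin \sqrt{I}$ for every $i \in \{1, \ldots, n\}$. Since the required conclusion fails on this tuple, the $\phi_{n+1}$-$n$-absorbing primary property forces $a_1 \cdots a_{n+1} \in \phi_{n+1}(I) = I^{n+1}$. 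Two structural facts drive the argument: because the prime ideals of $V$ form a chain, $\sqrt{I}$ is itself prime; and because the ideals of $V$ are totally ordered, $a_1 \cdots a_n \notin I$ forces $I \subsetneq (a_1 \cdots a_n) V$, whence $I = (a_1 \cdots a_n) J$ and $I^{n+1} = (a_1 \cdots a_n) J'$ for $J = (I :_V a_1 \cdots a_n)$ and $J' = (I^{n+1} :_V a_1 \cdots a_n)$.

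I first dispose of the case $I = I^{n+1}$: iterating yields $I = I^2$, and for any $a \notin I$, writing $I = aJ$ one obtains $aJ = I^2 = a^2 J^2$, so (cancelling $a$ in the domain) $J = aJ^2 \subseteq aJ = I$; then any $b$ with $ab \in I$ lies in $J \subseteq I$, proving $I$ is prime. A prime ideal is $n$-absorbing primary by Remark \ref{rem1}(1) and (3), contradicting the assumption. Hence $I \supsetneq I^{n+1}$.

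The core of the proof is to produce $c \in \sqrt{I}$ with $(a_1 \cdots a_n) c \in I \setminus I^{n+1}$. Given such $c$, the product of the tuple $(a_1, \ldots, a_n, a_{n+1} + c)$ equals $a_1 \cdots a_{n+1} + (a_1 \cdots a_n) c$ and lies in $I \setminus I^{n+1} = I \setminus \phi_{n+1}(I)$, since in the totally ordered ideals of $V$ a sum of an element of $I^{n+1}$ with one in $I \setminus I^{n+1}$ cannot lie in $I^{n+1}$. Applying the $\phi_{n+1}$-$n$-absorbing primary hypothesis excludes $a_1 \cdots a_n \in I$ and forces $a_1 \cdots \widehat{a_i} \cdots a_n (a_{n+1} + c) \in \sqrt{I}$ for some $i \in \{1, \ldots, n\}$. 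Since $\sqrt{I}$ is prime, either some $a_j$ with $j \leq n$ and $j \neq i$ lies in $\sqrt{I}$ (which, via any index $i' \neq j$ in $\{1, \ldots, n\}$, contradicts the assumption $a_1 \cdots \widehat{a_{i'}} \cdots a_{n+1} \notin \sqrt{I}$), or $a_{n+1} + c \in \sqrt{I}$; the latter combined with $c \in \sqrt{I}$ gives $a_{n+1} \in \sqrt{I}$, contradicting $a_{n+1} \notin \sqrt{I}$ (itself a consequence of the prime property of $\sqrt{I}$ and the non-containment assumption).

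The main obstacle is the construction of $c$. Let $v$ denote the valuation of $V$ with value group $\Gamma$, and let $H$ be the convex subgroup of $\Gamma$ associated with the prime $\sqrt{I}$; writing $\bar g$ for the image of $g$ in $\Gamma / H$, an element $a \in V \setminus \{0\}$ lies in $\sqrt{I}$ exactly when $\bar{v(a)} > 0$. With $v(I) = \{g > s\}$ and $v(I^{n+1}) = \{g > t\}$ (both non-principal, as $I^{n+1}$ is non-principal by hypothesis and hence so is $I$), one has $\bar s > 0$ (else $v(I) = v(\sqrt{I})$ and $I = \sqrt{I}$ would be prime, already ruled out) and $\bar t \geq (n+1)\bar s > \bar s$, since the valuation of any element of $I^{n+1}$ strictly exceeds $(n+1)s$. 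The condition $a_1 \cdots a_n \notin I$ gives $\bar{v(a_1 \cdots a_n)} \leq \bar s$, so the shifted interval $\bigl(s - v(a_1 \cdots a_n),\, t - v(a_1 \cdots a_n)\bigr]$ contains elements of $\Gamma_{\geq 0}$ projecting strictly above $0$ in $\Gamma/H$; lifting any such element via the surjection $v : V \setminus \{0\} \twoheadrightarrow \Gamma_{\geq 0}$ yields the desired $c$.
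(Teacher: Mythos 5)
Your overall plan---perturb $a_{n+1}$ by an element $c\in\sqrt{I}$ with $(a_1\cdots a_n)c\in I\setminus I^{n+1}$, then exploit the primality of $\sqrt{I}$---is coherent, and the verification that such a $c$ yields a contradiction is correct, as is your treatment of the case $I=I^{n+1}$. The gap is in the construction of $c$, which is the heart of your argument. First, for a non-principal ideal the value set $v(I)$ need not be $\{g\in\Gamma:g>s\}$ for an actual element $s\in\Gamma$; in general $s$ and $t$ are only cuts, so expressions such as $\bar{s}$, $\bar{t}\geq(n+1)\bar{s}$ and $s-v(a_1\cdots a_n)$ are not well defined as written. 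More seriously, even granting that arithmetic, the final inference does not follow from the three inequalities you display: knowing that the two endpoints of the interval have images $\geq 0$ and $>0$ in $\Gamma/H$ does not produce an element of $\Gamma$ \emph{inside} the interval with strictly positive image. When $\Gamma/H$ is discrete (say $\Gamma=\mathbb{Z}\times\mathbb{Q}$ lexicographically ordered, $H=0\times\mathbb{Q}$, $\Gamma/H=\mathbb{Z}$) and $\overline{v(a_1\cdots a_n)}=\bar{s}$, your inequalities are perfectly compatible with every element of $\Gamma$ lying strictly between the cuts $s$ and $t$ belonging to the single coset $\bar{s}+H$, in which case the shifted interval lies entirely in $H$ and contains no admissible $c$. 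Excluding this requires a further argument about the structure of $v(I^{n+1})$ as the upward closure of $(n+1)$-fold sums from $v(I)$ (or the observation that the hypothesized tuple cannot exist at all); as it stands, the existence of $c$ is asserted rather than proved.

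All of this machinery is unnecessary: the paper's proof is three lines and uses the non-principality hypothesis in a completely different way. From the failure conditions one gets $a_i\notin I$ for every $1\leq i\leq n+1$ (for $i\leq n$ because $a_1\cdots a_n\notin I$, and for $i=n+1$ because otherwise $a_2\cdots a_{n+1}\in I\subseteq\sqrt{I}$). Since the ideals of a valuation domain are totally ordered, $I\subsetneq\langle a_i\rangle$ for each $i$, hence $I^{n+1}\subseteq\langle a_1\cdots a_{n+1}\rangle$. If $a_1\cdots a_{n+1}$ lay in $I^{n+1}$, this would force $I^{n+1}=\langle a_1\cdots a_{n+1}\rangle$ to be principal, against the hypothesis; therefore $a_1\cdots a_{n+1}\in I\setminus\phi_{n+1}(I)$ and the $\phi_{n+1}$-$n$-absorbing primary property applies directly to the \emph{original} tuple, giving the contradiction with no perturbation, no case analysis, and no value groups. (Incidentally, your own observation that $\sqrt{I}$ is prime already shows that for $n\geq 2$ the hypothesized tuple cannot exist: the conditions $a_1\cdots\widehat{a_i}\cdots a_{n+1}\notin\sqrt{I}$ for $i=1,2$ force every $a_j$ to lie outside the prime $\sqrt{I}$, contradicting $a_1\cdots a_{n+1}\in I\subseteq\sqrt{I}$; so only $n=1$ carries content.)
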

\begin{proof}
$(\Rightarrow)$ Assume that $I$ is $\phi_n$-$n$-absorbing primary that is not $n$-absorbing primary. Therefore 
there are $a_1,\dots,a_{n+1}\in R$ such that $a_1\cdots a_{n+1}\in I$, but neither $a_1\cdots a_n\in I$ nor $a_1\cdots\widehat{a_i}\cdots a_{n+1}\in\sqrt{I}$ for every $1\leq i\leq n$. Hence $\langle a_i\rangle\nsubseteq I$ for every $1\leq i\leq n+1$.
Since $V$ is a valuation domain, thus $I\subset\langle a_i\rangle$ for every $1\leq i\leq n+1$, and so
$I^{n+1}\subseteq\langle a_1\cdots a_{n+1}\rangle$. Since $I^{n+1}$ is not principal, then $a_1\cdots a_{n+1}\in I\backslash I^{n+1}$.
Therefore $I~~~~$ $\phi_{n+1}$-$n$-absorbing primary implies that either $a_1\cdots a_n\in I$ or $a_1\cdots\widehat{a_i}\cdots a_{n+1}\in\sqrt{I}$ for some $1\leq i\leq n$, which is a contradiction. Consequently $I$ is $n$-absorbing primary.\\
$(\Leftarrow)$ is trivial.
\end{proof}

Let $J$ be an ideal of $R$ and $\phi:\mathfrak{J}(R)\rightarrow \mathfrak{J}(R)\cup\{\emptyset\}$ be a function. Define $\phi_J:\mathfrak{I}(R/J)\to \mathfrak{I}(R/J)\cup \{\emptyset\}$ by $\phi_J(I/J) = (\phi(I)+ J)/J$ for every ideal $I\in\mathfrak{J}(R)$ with $J\subseteq I$ (and $\phi_J(I/J) = \emptyset$ if $\phi(I) = \emptyset$). 

\begin{theorem}\label{frac}
Let $J\subseteq I$ be proper ideals of a  ring $R$, and let $\phi:\mathfrak{J}(R)\rightarrow \mathfrak{J}(R)\cup\{\emptyset\}$ be a function. 
\begin{enumerate}
\item If $I$ is a $\phi$-$n$-absorbing primary ideal of $R$, then $I/J$ is a $\phi_J$-$n$-absorbing primary ideal of $R/J$.
\item If $J\subseteq \phi(I)$ and $I/J$ is a $\phi_J$-$n$-absorbing primary ideal of $R/J$, then $I$ is a $\phi$-$n$-absorbing primary
ideal of $R$.
\item If $\phi(I)\subseteq J$ and $I$ is a $\phi$-$n$-absorbing primary ideal of $R$, then $I/J$ is a weakly $n$-absorbing primary ideal of $R/J$.
\item If $\phi(J)\subseteq\phi(I)$, $J$ is a $\phi$-$n$-absorbing primary
ideal of $R$ and  $I/J$ is a weakly $n$-absorbing primary ideal of $R/J$, then $I$ is a $\phi$-$n$-absorbing primary
ideal of $R$.
\end{enumerate}
\end{theorem}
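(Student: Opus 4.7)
The plan is to push everything through the standard correspondence between ideals of $R$ containing $J$ and ideals of $R/J$, using three facts which I would record at the outset: (a) for $J \subseteq I$, $\sqrt{I/J} = \sqrt{I}/J$ and $x \in I \Leftrightarrow x+J \in I/J$; (b) by definition $\phi_J(I/J) = (\phi(I)+J)/J$; and (c) in particular, if $J \subseteq \phi(I)$ then $\phi_J(I/J) = \phi(I)/J$, while if $\phi(I) \subseteq J$ then $\phi_J(I/J) = J/J = 0$ in $R/J$. Each of the four parts is then a short verification, so I would just work through them in order.

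For (1), I would pick $\bar{a}_1\cdots \bar{a}_{n+1} \in (I/J) \setminus \phi_J(I/J)$, lift to $a_1\cdots a_{n+1} \in I$ with $a_1\cdots a_{n+1} \notin \phi(I)+J$, hence in particular $\notin \phi(I)$, and apply the $\phi$-$n$-absorbing primary hypothesis on $I$; the two alternatives $a_1\cdots a_n \in I$ or $a_{n+1}(a_1\cdots \widehat{a_i}\cdots a_n) \in \sqrt{I}$ pass to cosets using fact (a). For (2), the added hypothesis $J \subseteq \phi(I)$ is used to rule out the bad case: given $a_1\cdots a_{n+1} \in I \setminus \phi(I)$, the image $\bar{a}_1\cdots \bar{a}_{n+1}$ is in $I/J$ but not in $\phi_J(I/J) = \phi(I)/J$ (otherwise $a_1\cdots a_{n+1}$ would lie in $\phi(I)$), and then the $\phi_J$-$n$-absorbing primary property of $I/J$ gives the required conclusion after lifting via (a).

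For (3), using (c), $\phi_J(I/J) = 0$, so the $\phi$-$n$-absorbing primary property of $I$ translates directly into: $\bar{0} \neq \bar{a}_1\cdots \bar{a}_{n+1} \in I/J$ implies $a_1\cdots a_{n+1} \in I \setminus J \subseteq I \setminus \phi(I)$, and the rest is just part~(1) specialized. Part (4) is the one that needs a case split, which is where I expect the only real subtlety. Given $a_1\cdots a_{n+1} \in I \setminus \phi(I)$, I would distinguish whether or not $a_1\cdots a_{n+1} \in J$. If $a_1\cdots a_{n+1} \notin J$, then $\bar{a}_1\cdots \bar{a}_{n+1}$ is a nonzero element of $I/J$, so the weak $n$-absorbing primary property of $I/J$ gives one of the two desired conclusions (upon lifting). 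If $a_1\cdots a_{n+1} \in J$, then since $\phi(J) \subseteq \phi(I)$ and $a_1\cdots a_{n+1} \notin \phi(I)$ we have $a_1\cdots a_{n+1} \in J \setminus \phi(J)$; applying the $\phi$-$n$-absorbing primary hypothesis on $J$ yields either $a_1\cdots a_n \in J \subseteq I$ or $a_{n+1}(a_1\cdots \widehat{a_i}\cdots a_n) \in \sqrt{J} \subseteq \sqrt{I}$, which is exactly what is required.

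The main (minor) obstacle is bookkeeping in part~(4) to make sure the hypothesis $\phi(J) \subseteq \phi(I)$ is invoked at the right moment so that the element $a_1\cdots a_{n+1}$, when it happens to lie in $J$, is also outside $\phi(J)$; everything else is definition chasing via facts (a)--(c).
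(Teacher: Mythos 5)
Your proposal is correct and follows essentially the same route as the paper's proof: parts (1)--(3) by the standard coset correspondence with the observations $\phi_J(I/J)=(\phi(I)+J)/J$ and $\sqrt{I/J}=\sqrt{I}/J$, and part (4) by the same case split on whether $a_1\cdots a_{n+1}$ lies in $J$, using $\phi(J)\subseteq\phi(I)$ to invoke the hypothesis on $J$ in that case. No gaps.
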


\begin{proof}
(1) Let $a_1, a_2,\dots,a_{n+1}\in R$ be such that $(a_1+J)(a_2+J)\cdots(a_{n+1}+J)\in (I/J)\backslash\phi_J(I/J)=(I/J)\backslash(\phi(I)+ J)/J$. Then $a_1a_2\cdots a_{n+1}\in I\backslash\phi(I)$ and $I$ $\phi$-$n$-absorbing primary gives either $a_1\cdots a_{n}\in I$ or $a_{1}\cdots \widehat{a_{i}}\cdots a_{n+1}\in \sqrt{I}$ for some $1\leq i\leq n$. Therefore either $(a_1+J)\cdots(a_n+J)\in I/J$ or $(a_{1}+J)\cdots\widehat{(a_i+J)}\cdots(a_{n+1}+J)\in \sqrt{I}/J = \sqrt{I/J}$ for some $1\leq i\leq n$. This shows that $I/J$ is $\phi_J$-$n$-absorbing primary.\\
(2) Suppose that $a_{1}a_{2}\cdots a_{n+1}\in I\backslash\phi(I)$ for some $a_{1},a_2,\dots,a_{n+1}\in R$. Then $(a_1+J)(a_2+J)\cdots(a_{n+1}+J)\in(I/J)\backslash(\phi(I)/J) =(I/J)\backslash\phi_J(I/J)$. Since $I/J$ is assumed to be $\phi_J$-$n$-absorbing primary, we get either $(a_1+J)\cdots(a_{n}+J)\in I/J$ or $(a_1+J)\cdots\widehat{(a_i+J)}\cdots(a_{n+1}+J)\in \sqrt{I/J} = \sqrt{I}/J$ for some $1\leq i\leq n$. Consequently, either $a_1\cdots a_{n}\in I$ or $a_{1}\cdots \widehat{a_{i}}\cdots a_{n+1}\in \sqrt{I}$ for some $1\leq i\leq n$, that $I$ is $\phi$-$n$-absorbing primary.\\
(3) is a direct consequence of part (1).\\
(4) Let $a_{1}\cdots a_{n+1}\in I\backslash\phi(I)$ where $a_{1},\dots,a_{n+1}\in R$. Note that $a_{1}\cdots a_{n+1}\notin\phi(J)$ because $\phi(J)\subseteq\phi(I)$. If $a_{1}\cdots a_{n+1}\in J$, then either $a_1\cdots a_n\in J\subseteq I$ or $a_{1}\cdots\widehat{a_{i}}\cdots a_{n+1}\in\sqrt{J}\subseteq\sqrt{I}$ for some $1\leq i\leq n$, since $J$ is $\phi$-$n$-absorbing primary. If $a_{1}\cdots a_{n+1}\notin J$, then $(a_{1}+I)\cdots (a_{n+1}+I)\in(I/J)\backslash\{0\}$ and so either $(a_{1}+I)\cdots (a_{n}+I)\in I/J$ or $(a_{1}+J)\cdots\widehat{(a_{i}+J)}\cdots (a_{n+1}+J)\in \sqrt{I/J}=\sqrt{I}/J$ for some 
$1\leq i\leq n$. Therefore, either $a_1\cdots a_{n}\in I$ or $a_{1}\cdots\widehat{a_{i}}\cdots a_{n+1}\in\sqrt{I}$ for some $1\leq i\leq n$. Consequently $I$ is a  $\phi$-$n$-absorbing primary ideal of $R$.
\end{proof}

\begin{corollary}
Let $R$ be a  ring, and let $\phi:\mathfrak{J}(R)\rightarrow \mathfrak{J}(R)\cup\{\emptyset\}$  be a function. An ideal $I$ of $R$ is $\phi$-$n$-absorbing primary if and only if $I/\phi(I)$ is a weakly $n$-absorbing primary ideal of $R/\phi(I)$.
\end{corollary}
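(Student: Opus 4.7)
The plan is to deduce this corollary directly from Theorem \ref{frac} by specializing $J = \phi(I)$. Both directions are essentially one-line applications of the appropriate parts of that theorem, so the main task is to verify the hypotheses and unwind the definition of $\phi_J$ in this special case.

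For the forward direction, I would invoke Theorem \ref{frac}(3) with $J := \phi(I)$. The hypothesis $\phi(I) \subseteq J$ holds trivially since $J = \phi(I)$, and by assumption $I$ is $\phi$-$n$-absorbing primary, so the conclusion of part (3) gives that $I/\phi(I)$ is weakly $n$-absorbing primary in $R/\phi(I)$.

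For the converse, I would apply Theorem \ref{frac}(2) with $J := \phi(I)$. The containment $J \subseteq \phi(I)$ holds by construction. The key observation is that, under this choice of $J$, the induced function $\phi_J$ evaluated on $I/J$ yields
\[
\phi_J(I/J) \;=\; (\phi(I) + J)/J \;=\; \phi(I)/\phi(I) \;=\; 0.
\]
Hence a $\phi_J$-$n$-absorbing primary ideal $I/J$ is precisely a weakly $n$-absorbing primary ideal of $R/J = R/\phi(I)$, which is what we are assuming. Part (2) then yields that $I$ is $\phi$-$n$-absorbing primary in $R$.

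There is no real obstacle here; the entire argument is a direct specialization of Theorem \ref{frac}. The only point requiring mild care is the bookkeeping observation that $\phi_{\phi(I)}(I/\phi(I)) = 0$, so that "$\phi_J$-$n$-absorbing primary" collapses to "weakly $n$-absorbing primary" for this particular $J$, aligning the statements of parts (2) and (3) with the corollary.
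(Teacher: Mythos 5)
Your proposal is correct and is exactly the paper's argument: the authors also prove this by setting $J=\phi(I)$ in parts (2) and (3) of Theorem \ref{frac}. Your extra remark that $\phi_{\phi(I)}(I/\phi(I))=0$, so that ``$\phi_J$-$n$-absorbing primary'' collapses to ``weakly $n$-absorbing primary,'' is the right (and only) bookkeeping point needed to align the two statements.
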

\begin{proof}
In parts (2) and (3) of Theorem \ref{frac} set $J=\phi(I)$.
\end{proof}

\begin{corollary}
Let $R$ be a ring, $\phi:\mathfrak{J}(R)\rightarrow \mathfrak{J}(R)\cup\{\emptyset\}$ a function and $I$ a proper ideal of $R$ such that $\phi(\langle I,X\rangle)\subseteq \langle X\rangle$. Then $\langle I,X\rangle$ is a $\phi$-$n$-absorbing primary ideal of
$R[X]$ if and only if $I$ is a weakly $n$-absorbing ideal of $R$. 
\end{corollary}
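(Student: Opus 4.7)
The plan is to recognize the containment $\langle X\rangle \subseteq \langle I,X\rangle$ and apply Theorem \ref{frac} with $J := \langle X\rangle$, exploiting the standard isomorphism $R[X]/\langle X\rangle \cong R$, under which $\langle I,X\rangle/\langle X\rangle$ is identified with $I$, and the radical computation $\sqrt{\langle I,X\rangle} = \langle \sqrt{I},X\rangle$.

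For the direction ($\Rightarrow$), the hypothesis $\phi(\langle I,X\rangle) \subseteq \langle X\rangle$ is precisely the premise of Theorem \ref{frac}(3) with $J=\langle X\rangle$. Hence if $\langle I,X\rangle$ is $\phi$-$n$-absorbing primary in $R[X]$, then $\langle I,X\rangle/\langle X\rangle$ is weakly $n$-absorbing primary in $R[X]/\langle X\rangle$, which via the isomorphism yields the desired statement about $I$ in $R$.

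For the direction ($\Leftarrow$), I will argue directly. Take $f_1,\dots,f_{n+1}\in R[X]$ with $f_1\cdots f_{n+1} \in \langle I,X\rangle \setminus \phi(\langle I,X\rangle)$ and write $f_i=a_i+Xg_i(X)$ with $a_i\in R$; the product lies in $\langle I,X\rangle$ if and only if the constant-term product $a_1\cdots a_{n+1}$ lies in $I$. When $a_1\cdots a_{n+1}\neq 0$, the weakly $n$-absorbing property of $I$ provides an $n$-subproduct of the $a_i$'s in $I$, and since constant terms of sub-products of the $f_i$'s are sub-products of the $a_i$'s, the corresponding $n$-subproduct of the $f_i$'s lies in $\langle I,X\rangle$ (or in $\langle\sqrt{I},X\rangle=\sqrt{\langle I,X\rangle}$, as appropriate). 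The zero case $a_1\cdots a_{n+1}=0$ is handled by tracking which constant term vanishes: if some $a_j=0$ with $j\leq n$ then $f_j\in\langle X\rangle$, so $f_1\cdots f_n\in\langle X\rangle\subseteq\langle I,X\rangle$; if $a_{n+1}=0$ then $f_{n+1}\in\langle X\rangle\subseteq\sqrt{\langle I,X\rangle}$, placing any product of $f_{n+1}$ with $n-1$ of $f_1,\dots,f_n$ into $\sqrt{\langle I,X\rangle}$.

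The main obstacle is the subtle boundary case in which all the constant terms $a_i$ are nonzero yet $a_1\cdots a_{n+1}=0$ in $R$ (only possible when $R$ has zero divisors), since the weakly $n$-absorbing hypothesis is silent there; completing the argument in this sub-case requires exploiting that $f_1\cdots f_{n+1}\in\langle X\rangle$ yet $f_1\cdots f_{n+1}\notin\phi(\langle I,X\rangle)$ in order to reduce, via a small perturbation of the $f_i$'s by multiples of $X$, to a situation where the constant-term product is nonzero and weakly $n$-absorbing can be applied.
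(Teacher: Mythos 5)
Your forward direction coincides with the paper's: both apply Theorem \ref{frac}(3) with $J=\langle X\rangle$ together with the isomorphism $R[X]/\langle X\rangle\cong R$. Note, however, that this yields that $I$ is weakly $n$-absorbing \emph{primary}, whereas the corollary as written asserts that $I$ is weakly $n$-absorbing, and these are genuinely different conclusions: for $n=1$ the ideal $\langle 4,X\rangle$ of $\mathbb{Z}[X]$ is primary (its radical $\langle 2,X\rangle$ is maximal) and $\phi_{\emptyset}(\langle 4,X\rangle)=\emptyset\subseteq\langle X\rangle$, yet $4\mathbb{Z}$ is not weakly prime since $0\neq 2\cdot 2\in 4\mathbb{Z}$ while $2\notin 4\mathbb{Z}$. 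So your claim that Theorem \ref{frac}(3) ``yields the desired statement'' silently substitutes a weaker conclusion for the stated one. (The paper's own one-line proof has the same feature, so this is arguably a defect of the statement, but it should be flagged rather than elided.)

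The serious gap is in the converse, and it is exactly the case you set aside. Perturbing the $f_i$ by multiples of $X$ does not change their constant terms, so no such perturbation can move you out of the situation where every $a_i\neq 0$ but $a_1\cdots a_{n+1}=0$; the reduction you describe does not exist. Worse, the implication actually fails there: take $R=\mathbb{Z}_{p_1p_2\cdots p_{n+1}}$ for distinct primes $p_i$, $I=\{0\}$ (vacuously weakly $n$-absorbing, and weakly $n$-absorbing primary), and $\phi=\phi_0$, so that $\phi(\langle I,X\rangle)=\{0\}\subseteq\langle X\rangle$. With $f_i=p_i$ for $1\leq i\leq n$ and $f_{n+1}=p_{n+1}+X$ one gets $f_1\cdots f_{n+1}=p_1\cdots p_nX$, a nonzero element of $\langle I,X\rangle=\langle X\rangle$, while $f_1\cdots f_n=p_1\cdots p_n\notin\langle X\rangle$ and every product of $f_{n+1}$ with $n-1$ of $f_1,\dots,f_n$ has a nonzero, non-nilpotent constant term and so lies outside $\sqrt{\langle X\rangle}=\langle X\rangle$. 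Hence $\langle I,X\rangle$ is not $\phi_0$-$n$-absorbing primary. For comparison, the paper disposes of this direction by citing Theorem \ref{frac}(4) with $J=\langle X\rangle$, but that theorem requires $\langle X\rangle$ to be a $\phi$-$n$-absorbing primary ideal of $R[X]$ with $\phi(\langle X\rangle)\subseteq\phi(\langle I,X\rangle)$, neither of which is among the hypotheses (and the first of which fails in the example above). Your instinct to argue the converse directly was sound, but the sub-case you could not close is not a technicality: it is precisely where the argument, and indeed the statement itself, breaks down when $R$ has zero-divisors.
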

\begin{proof}
By parts (3), (4) of Theorem \ref{frac} and regarding the isomorphism $\langle I,X\rangle/\langle X\rangle\\\simeq I$ in $R[X]/\langle X\rangle\simeq R$ we have the result.
\end{proof}

Let $S$ be a multiplicatively closed subset of a  ring $R$. Let $\phi:\mathfrak{J}(R)\rightarrow \mathfrak{J}(R)\cup\{\emptyset\}$ be a function and define $\phi_S: {\mathfrak{I}}(R_S)\to {\mathfrak{I}}(R_S)\cup \{\emptyset\}$ by $\phi_S(J) = (\phi(J\cap R))_S$ (and $\phi_S(J) = \emptyset$ if $\phi(J\cap R) = \emptyset$) for every ideal $J$ of $R_S$. Note that $\phi_S(J)\subseteq J$. \\Let $M$ be an $R$-module. The set of all zero divisors on $M$ is:

${\rm Z}_{R}(M)=\{r\in R\mid \mbox{there exists an element}~0\neq x\in M~\mbox{such that}~rx=0\}$.
\begin{proposition}
Let $R$ be a  ring and $\phi:\mathfrak{J}(R)\rightarrow \mathfrak{J}(R)\cup\{\emptyset\}$ be a function. Suppose that $S$ is a multiplicatively closed subset of $R$ and $I$ is a proper ideal of $R$.
\begin{enumerate}
\item If $I$ is a $\phi$-$n$-absorbing primary ideal of $R$  with $I\cap S=\emptyset$ and $\phi(I)_S\subseteq \phi_S(I_S)$, then $I_S$ is a $\phi_S$-$n$-absorbing primary ideal of $R_S$.
\item If $I_S$ is a $\phi_S$-$n$-absorbing primary ideal of $R_S$ with $\phi_S(I_S)\subseteq\phi(I)_S$, $S\cap Z_R(\frac{I}{\phi(I)})=\emptyset$ and $S\cap Z_R(\frac{R}{I})=\emptyset$, then $I$ is a $\phi$-$n$-absorbing primary ideal of $R$.
\end{enumerate} 
\end{proposition}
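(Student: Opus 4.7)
The plan is to handle the two parts separately, both by taking a witness product and clearing denominators. Throughout I will use the standard fact $\sqrt{I_S}=(\sqrt{I})_S$.

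For part (1), I would start from elements $a_1/s_1,\ldots,a_{n+1}/s_{n+1}\in R_S$ whose product lies in $I_S\setminus \phi_S(I_S)$. Clearing denominators gives some $t\in S$ with $t\,a_1 a_2\cdots a_{n+1}\in I$. I first verify that $(ta_1)a_2\cdots a_{n+1}\notin\phi(I)$: otherwise, since $t/1$ is a unit in $R_S$, the product $(a_1/s_1)\cdots(a_{n+1}/s_{n+1})$ would lie in $\phi(I)_S\subseteq\phi_S(I_S)$, contradicting the choice of the tuple. Applying the $\phi$-$n$-absorbing primary property of $I$ to the $(n{+}1)$-tuple $(ta_1,a_2,\ldots,a_{n+1})$ yields either $(ta_1)a_2\cdots a_n\in I$, giving $(a_1/s_1)\cdots(a_n/s_n)\in I_S$ after cancelling $t/1$, or the product of $a_{n+1}$ with $n-1$ of $\{ta_1,a_2,\ldots,a_n\}$ lies in $\sqrt{I}$. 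In the second case, $\sqrt{I_S}=(\sqrt{I})_S$ together with the invertibility of $t/1$ give that the corresponding product of $a_{n+1}/s_{n+1}$ with $n-1$ of $a_1/s_1,\ldots,a_n/s_n$ lies in $\sqrt{I_S}$, as required.

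For part (2), suppose $a_1\cdots a_{n+1}\in I\setminus\phi(I)$. I push to $R_S$: the image lies in $I_S$, and the crucial step is to show that it avoids $\phi_S(I_S)$. If it did lie there, then $\phi_S(I_S)\subseteq \phi(I)_S$ would produce $s\in S$ with $s\,a_1\cdots a_{n+1}\in\phi(I)$; but $\overline{a_1\cdots a_{n+1}}$ is then a nonzero element of $I/\phi(I)$ annihilated by $s$, contradicting $S\cap Z_R(I/\phi(I))=\emptyset$. The $\phi_S$-$n$-absorbing primary hypothesis on $I_S$ now gives either $(a_1/1)\cdots(a_n/1)\in I_S$, or the product of $a_{n+1}/1$ with $n-1$ of $a_1/1,\ldots,a_n/1$ lies in $\sqrt{I_S}$. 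I descend either conclusion back to $R$ using $S\cap Z_R(R/I)=\emptyset$: each alternative produces, after clearing denominators and (in the radical case) taking a power, an element $s\in S$ and some $y\in R$ with $s\cdot y^k\in I$; if $\bar y\ne 0$ in $R/I$ one chooses the smallest $m$ with $s^m\bar y^k\ne 0$ and $s^{m+1}\bar y^k=0$, forcing $s\in Z_R(R/I)$, a contradiction. Hence $a_1\cdots a_n\in I$ or $a_{n+1}a_1\cdots\widehat{a_i}\cdots a_n\in\sqrt{I}$ for the appropriate $i$.

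The routine part is the bookkeeping in clearing denominators and invoking $\sqrt{I_S}=(\sqrt{I})_S$. The main subtlety is the twofold use of the non-zerodivisor hypotheses in part (2): $S\cap Z_R(I/\phi(I))=\emptyset$ ensures the image in $R_S$ avoids $\phi_S(I_S)$, while $S\cap Z_R(R/I)=\emptyset$ is needed to lift the conclusions back from $R_S$ to $R$, where the radical alternative forces a careful minimal-exponent argument to upgrade ``a power of $s$ annihilates $\bar y$'' to ``$s$ itself is a zero divisor on $R/I$''.
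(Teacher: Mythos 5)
Your proposal is correct and follows essentially the same route as the paper: clear denominators, check membership outside $\phi(I)$ (resp.\ $\phi_S(I_S)$) using the stated inclusion between $\phi(I)_S$ and $\phi_S(I_S)$, apply the absorbing-primary hypothesis, and transfer back via $\sqrt{I_S}=(\sqrt{I})_S$ and the zero-divisor conditions. The only cosmetic differences are that the paper attaches the cleared factor to $a_{n+1}$ rather than $a_1$ in part (1), and in part (2) it descends the radical alternative by noting $S\cap {\rm Z}_R(R/I)=\emptyset$ implies $S\cap {\rm Z}_R(R/\sqrt{I})=\emptyset$ instead of your (equally valid, if slightly overcomplicated) power-and-annihilator argument.
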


\begin{proof}
(1) Assume that $\frac{a_1}{s_1}\frac{a_2}{s_2}\cdots\frac{a_{n+1}}{s_{n+1}}\in I_S\backslash\phi_S(I_S)$ for some $\frac{a_1}{s_1},\frac{a_2}{s_2},\dots,\frac{a_{n+1}}{s_{n+1}}\in R_S$ such that $\frac{a_1}{s_1}\frac{a_2}{s_2}\cdots\frac{a_{n}}{s_{n}}\notin I_S$. Since $\frac{a_1}{s_1}\frac{a_2}{s_2}\cdots\frac{a_{n+1}}{s_{n+1}}\in I_S$, then there is $s\in S$ such that $sa_1a_2\cdots a_{n+1}\in I$. If $sa_1a_2\cdots a_{n+1}\in \phi(I)$, then $\frac{a_1}{s_1}\frac{a_2}{s_2}\cdots\frac{a_{n+1}}{s_{n+1}}=\frac{sa_1a_2\cdots a_{n+1}}{ss_1s_2\cdots s_{n+1}}\in \phi(I)_S\subseteq\phi_S(I_S)$, a contradiction. Hence $a_1a_2\cdots a_n(sa_{n+1})\in I\backslash\phi(I)$. As $I$ is $\phi$-$n$-absorbing primary, we get either $a_1a_2\cdots a_{n}\in I$ or $a_1\cdots\widehat{a_i}\cdots a_n(sa_{n+1})\in\sqrt{I}$ for some $1\leq i\leq n$. The first case implies that $\frac{a_1}{s_1}\frac{a_2}{s_2}\cdots\frac{a_{n}}{s_{n}}\in I_S$ which is a contradiction, and the second case implies that $\frac{a_1}{s_1}\cdots\widehat{\frac{a_i}{s_i}}\cdots\frac{a_{n+1}}{s_{n+1}}\in (\sqrt{I})_S=\sqrt{I_S}$ for some $1\leq i\leq n$. Consequently $I_S$ is a $\phi_S$-$n$-absorbing primary ideal of $R_S$.\\
(2) Let $a_1a_2\cdots a_{n+1}\in I\backslash\phi(I)$ for some $a_1,a_2,\dots,a_{n+1}\in R$ and let $a_1a_2\cdots a_{n}\notin I$. Then $\frac{a_1}{1}\frac{a_2}{1}\cdots \frac{a_{n+1}}{1}\in I_S$. Assume that $\frac{a_1}{1}\frac{a_2}{1}\cdots \frac{a_{n+1}}{1}\in\phi_S(I_S)$. Since $\phi_S(I_S)\subseteq\phi(I)_S$, then there exists a $s\in S$ such that $sa_1a_2\cdots a_{n+1}\in\phi(I)$. Since $S\cap Z_R(\frac{I}{\phi(I)})=\emptyset$ we have that $a_1a_2\cdots a_{n+1}\in\phi(I)$, which is a contradiction. Therefore $\frac{a_1}{1}\frac{a_2}{1}\cdots \frac{a_{n+1}}{1}\in I_S\backslash\phi_S(I_S)$. Hence, either $\frac{a_1}{1}\frac{a_2}{1}\cdots \frac{a_{n}}{1}\in I_S$ or
$\frac{a_1}{1}\cdots\widehat{\frac{a_i}{1}}\cdots \frac{a_{n+1}}{1}\in\sqrt{I_S}=(\sqrt{I})_S$ for some $1\leq i\leq n$.
If $\frac{a_1}{1}\frac{a_2}{1}\cdots \frac{a_{n}}{1}\in I_S$, then there exists $u\in S$ such that $ua_1a_2\cdots a_{n}\in I$ and so
the assumption $S\cap Z_R(\frac{R}{I})=\emptyset$ shows that $a_1a_2\cdots a_{n}\in I$, a contradiction. Therefore, there is 
$1\leq i\leq n$ such that $\frac{a_1}{1}\cdots\widehat{\frac{a_i}{1}}\cdots \frac{a_{n+1}}{1}\in(\sqrt{I})_S$, and thus there is
a $t\in S$ such that $ta_1\cdots\widehat{a_i}\cdots a_{n+1}\in\sqrt{I}$. Note that $S\cap Z_R(\frac{R}{I})=\emptyset$ implies that
$S\cap Z_R(\frac{R}{\sqrt{I}})=\emptyset$, then $a_1\cdots\widehat{a_i}\cdots a_{n+1}\in\sqrt{I}$. Consequently $I$ is a $\phi$-$n$-absorbing primary ideal of $R$.
\end{proof}

Let $f:R\to T$ be a homomorphism of rings and let $\phi_T:\mathfrak{J}(T)\rightarrow \mathfrak{J}(T)\cup\{\emptyset\}$ be a function. Define $\phi_R:\mathfrak{J}(R)\rightarrow \mathfrak{J}(R)\cup\{\emptyset\}$ by $\phi_R(I)=\phi_T(I^e)^c$ (and $\phi_R(I) = \emptyset$ if $\phi_T(I^e)=\emptyset$). We recall that if $R$ is a Pr\"{u}fer domain or $T=R_S$ for some multiplicatively closed subset $S$ of $R$, then for every ideal $J$ of $T$ we have $J^{ce}=J$.
\begin{theorem}\label{context}
Let $f:R\to T$ be a homomorphism of rings. If $J$ is a $\phi_T$-$n$-absorbing primary ideal of $T$ such that $\phi_T(J)\subseteq\phi_T(J^{ce})$ $($e.g. where $J=J^{ce}~)$, then
$J^c$ is a $\phi_R$-$n$-absorbing primary ideal of $R$.
\end{theorem}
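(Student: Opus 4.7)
The plan is to pull everything back along $f$. Let $a_1,\dots,a_{n+1}\in R$ with $a_1\cdots a_{n+1}\in J^c\setminus \phi_R(J^c)$. Applying $f$, one immediately has $f(a_1)\cdots f(a_{n+1})=f(a_1\cdots a_{n+1})\in J$. The first verification is that this product lies \emph{outside} $\phi_T(J)$, because this is exactly what is needed to invoke the $\phi_T$-$n$-absorbing primary hypothesis on $J$.

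The second step is the small calculation producing that non-membership. If, toward contradiction, $f(a_1)\cdots f(a_{n+1})\in\phi_T(J)$, then by the standing hypothesis $\phi_T(J)\subseteq \phi_T(J^{ce})$ we would get $f(a_1\cdots a_{n+1})\in \phi_T(J^{ce})$. Contracting gives $a_1\cdots a_{n+1}\in \phi_T(J^{ce})^c=\phi_R(J^c)$, contrary to the choice of the $a_i$'s. So indeed $f(a_1)\cdots f(a_{n+1})\in J\setminus\phi_T(J)$.

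Now I apply the $\phi_T$-$n$-absorbing primary hypothesis on $J$ to the elements $f(a_1),\dots,f(a_{n+1})\in T$. One of two things happens: either $f(a_1)\cdots f(a_n)\in J$, in which case $f(a_1\cdots a_n)\in J$ and hence $a_1\cdots a_n\in J^c$; or else there is some $1\leq i\leq n$ with $f(a_1)\cdots\widehat{f(a_i)}\cdots f(a_{n+1})\in\sqrt{J}$. In the latter case $f\bigl((a_1\cdots\widehat{a_i}\cdots a_{n+1})^m\bigr)\in J$ for some $m\geq 1$, so $(a_1\cdots\widehat{a_i}\cdots a_{n+1})^m\in J^c$, and thus $a_1\cdots\widehat{a_i}\cdots a_{n+1}\in\sqrt{J^c}$. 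Either alternative matches the definition, so $J^c$ is $\phi_R$-$n$-absorbing primary.

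The only real obstacle is the non-membership step in paragraph two; everything else is bookkeeping about how contraction interacts with products, radicals, and the defining formula $\phi_R(\,\cdot\,)=\phi_T((\,\cdot\,)^e)^c$. The auxiliary remark following Theorem \ref{context} (noting that $J=J^{ce}$ holds automatically for localizations and for Pr\"ufer domains) is precisely what guarantees that the hypothesis $\phi_T(J)\subseteq\phi_T(J^{ce})$ is trivial in those standard cases, so no separate argument is needed there.
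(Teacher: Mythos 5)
Your proof is correct and follows essentially the same route as the paper's: pull the product back along $f$, rule out membership in $\phi_T(J)$ via the hypothesis $\phi_T(J)\subseteq\phi_T(J^{ce})$ and the identity $\phi_R(J^c)=\phi_T(J^{ce})^c$, then apply the $\phi_T$-$n$-absorbing primary property of $J$ and contract the two alternatives. Your spelled-out justification that $\sqrt{J}^{\,c}\subseteq\sqrt{J^c}$ (via an $m$-th power) is a detail the paper leaves implicit, but the argument is the same.
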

\begin{proof}
Let $a_1a_2\cdots a_{n+1}\in J^c\backslash\phi_R(J^c)$ for some $a_1,a_2\dots,a_{n+1}\in R$. If $f(a_1)f(a_2)\\\cdots f(a_{n+1})\in\phi_T(J)$, then $a_1a_2\cdots a_{n+1}\in\phi_T(J)^c\subseteq\phi_T(J^{ce})^c=\phi_R(J^c)$,
which is a contradiction. Therefore $f(a_1)f(a_2)\cdots f(a_{n+1})\in J\backslash\phi_T(J)$.
Hence, either $f(a_1)f(a_2)\cdots f(a_{n})\in J$ or $f(a_1)\cdots\widehat{f(a_i)}\cdots f(a_{n+1})\in \sqrt{J}$ for some
$1\leq i\leq n$. Thus, either $a_1a_2\cdots a_{n}\in J^c$ or $a_1\cdots\widehat{a_i}\cdots a_{n+1}\in \sqrt{J^c}$ for some
$1\leq i\leq n$. Consequently $J^c$ is a $\phi_R$-$n$-absorbing primary ideal of $R$.
\end{proof}

Let $R,~T$ be rings and $\psi_R:\mathfrak{J}(R)\rightarrow \mathfrak{J}(R)\cup\{\emptyset\}$ be a function. Define  $\psi_T:\mathfrak{J}(T)\rightarrow \mathfrak{J}(T)\cup\{\emptyset\}$ by $\psi_T(J)=\psi_R(J^c)^e$ (and $\psi_T(J) = \emptyset$ if $\psi_R(J^c)=\emptyset$). We recall that if $f:R\to T$ is a faithfully flat  homomorphism of rings, then for every ideal $I$ of $R$ we have $I^{ec}=I$.
\begin{theorem}
Let $f:R\to T$ be a faithfully flat  homomorphism of rings.
\begin{enumerate}
\item If $J$ is a $\psi_T$-$n$-absorbing primary ideal of $T$, then $J^c$ is a $\psi_R$-$n$-absorbing primary ideal of $R$.
\item If $I^e$ is a $\psi_T$-$n$-absorbing primary ideal of $T$ for some ideal $I$ or $R$, then $I$ is a $\psi_R$-$n$-absorbing primary ideal of $R$.
\end{enumerate}

\end{theorem}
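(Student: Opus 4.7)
The plan is to prove part (1) directly and then deduce part (2) as an immediate consequence, since faithful flatness yields $I = I^{ec}$, which means $I$ is the contraction of the ideal $I^e$, putting us exactly in the setting of part (1) with $J = I^e$.

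For part (1), I would start with elements $a_1,\ldots,a_{n+1}\in R$ with $a_1\cdots a_{n+1}\in J^c\setminus\psi_R(J^c)$ and push the product forward: $f(a_1)\cdots f(a_{n+1})\in J$ by the definition of $J^c$. The first key step is to verify that this product avoids $\psi_T(J)=\psi_R(J^c)^e$. If on the contrary $f(a_1\cdots a_{n+1})\in\psi_R(J^c)^e$, then $a_1\cdots a_{n+1}\in(\psi_R(J^c)^e)^c=\psi_R(J^c)^{ec}$, and since $f$ is faithfully flat we have $\psi_R(J^c)^{ec}=\psi_R(J^c)$, contradicting our assumption. (In the degenerate case $\psi_R(J^c)=\emptyset$ this step is automatic by convention.) Hence $f(a_1)\cdots f(a_{n+1})\in J\setminus\psi_T(J)$.

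Now apply the $\psi_T$-$n$-absorbing primary property of $J$: either $f(a_1)\cdots f(a_n)\in J$, in which case $a_1\cdots a_n\in J^c$, or for some $1\le i\le n$ we have $f(a_1)\cdots\widehat{f(a_i)}\cdots f(a_{n+1})\in\sqrt{J}$. In the latter case, one checks the routine identity $(\sqrt{J})^c=\sqrt{J^c}$ (if $b^m\in J^c$ then $f(b)^m\in J$ and conversely), which yields $a_1\cdots\widehat{a_i}\cdots a_{n+1}\in\sqrt{J^c}$. This shows $J^c$ is $\psi_R$-$n$-absorbing primary.

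For part (2), faithful flatness gives $(I^e)^c=I^{ec}=I$, so $I=J^c$ for $J=I^e$. Since $J$ is $\psi_T$-$n$-absorbing primary by hypothesis, part (1) applied to $J$ gives that $J^c=I$ is $\psi_R$-$n$-absorbing primary, completing the proof. The only genuine obstacle is the bookkeeping around the two $\psi$ functions, which is resolved entirely by the identity $\psi_R(J^c)^{ec}=\psi_R(J^c)$ coming from faithful flatness; everything else is a direct pullback along $f$.
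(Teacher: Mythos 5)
Your proof is correct and is in substance the same argument as the paper's: the paper disposes of part (1) by invoking Theorem~\ref{context} after checking, via $I^{ec}=I$, that the induced function $\phi_R$ equals $\psi_R$ and that $\psi_T(J)=\psi_T(J^{ce})$, and your direct pullback (push the product forward, rule out landing in $\psi_T(J)=\psi_R(J^c)^e$ using $\psi_R(J^c)^{ec}=\psi_R(J^c)$, then contract using $(\sqrt{J})^c=\sqrt{J^c}$) is precisely what the proof of Theorem~\ref{context} does in this special case. Part (2) is handled identically in both, by noting $I=(I^e)^c$ under faithful flatness.
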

\begin{proof}
(1) Suppose that $J$ is a $\psi_T$-$n$-absorbing primary ideal of $T$. In Theorem \ref{context} get $\phi_T:=\psi_T$. Let $I$ be an ideal of $R$. Then $$\phi_R(I)=\phi_T(I^e)^c=\psi_T(I^e)^c=\psi_R(I^{ec})^{ec}=\psi_R(I).$$
So $\phi_R=\psi_R$. Moreover, $\psi_T(J)=\psi_R(J^c)^e=\psi_R(J^{cec})^e=\psi_T(J^{ce})$. Therefore $J^c$ is a $\psi_R$-$n$-absorbing primary ideal of $R$.\\
(2) By part (1).
\end{proof}

\begin{proposition}
Let $I$ be an ideal of a ring $R$ such that $\phi(I)$ be an $n$-absorbing primary ideal of $R$. 
If $I$ is a $\phi$-$n$-absorbing primary ideal of $R$, then $I$ is an $n$-absorbing primary ideal of $R$.
\end{proposition}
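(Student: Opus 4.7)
The plan is a straightforward two-case split on whether the product $a_1\cdots a_{n+1}$ lies in $\phi(I)$ or not. Recall from the introduction that we always assume $\phi(I)\subseteq I$, so taking radicals gives $\sqrt{\phi(I)}\subseteq\sqrt{I}$; this containment will be the bridge that lets the $n$-absorbing primary property of $\phi(I)$ feed back into one for $I$.

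More precisely, I would start with arbitrary elements $a_1,a_2,\dots,a_{n+1}\in R$ satisfying $a_1a_2\cdots a_{n+1}\in I$, and consider two cases. In the first case, $a_1a_2\cdots a_{n+1}\notin\phi(I)$, so $a_1a_2\cdots a_{n+1}\in I\setminus\phi(I)$, and the hypothesis that $I$ is $\phi$-$n$-absorbing primary immediately delivers the desired conclusion: either $a_1\cdots a_n\in I$, or $a_1\cdots\widehat{a_i}\cdots a_{n+1}\in\sqrt{I}$ for some $1\leq i\leq n$. In the second case, $a_1a_2\cdots a_{n+1}\in\phi(I)$, and now the hypothesis that $\phi(I)$ itself is $n$-absorbing primary yields either $a_1\cdots a_n\in\phi(I)\subseteq I$ or $a_1\cdots\widehat{a_i}\cdots a_{n+1}\in\sqrt{\phi(I)}\subseteq\sqrt{I}$ for some $1\leq i\leq n$. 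Either branch gives exactly what is required to conclude that $I$ is $n$-absorbing primary.

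There is no real obstacle here; the statement is essentially a bookkeeping observation that converting a $\phi$-version into the full version costs nothing when $\phi(I)$ itself already has the stronger property. The only point worth flagging is the implicit use of $\phi(I)\subseteq I$ (hence $\sqrt{\phi(I)}\subseteq\sqrt{I}$), which was fixed as a standing convention in the introduction and so requires no separate justification in the write-up.
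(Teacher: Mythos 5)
Your proof is correct and follows essentially the same two-case argument as the paper: when the product escapes $\phi(I)$ you invoke the $\phi$-$n$-absorbing primary hypothesis on $I$, and when it falls into $\phi(I)$ you invoke the $n$-absorbing primary hypothesis on $\phi(I)$ together with $\phi(I)\subseteq I$ and $\sqrt{\phi(I)}\subseteq\sqrt{I}$. Nothing further is needed.
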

\begin{proof}
Assume that $a_1a_2\cdots a_{n+1}\in I$ for some elements $a_1,a_2,\dots,a_{n+1}\in R$ such that
$a_1a_2\cdots a_{n}\notin I$. If $a_1a_2\cdots a_{n+1}\in\phi(I)$, then $\phi(I)~$ $n$-absorbing primary and $a_1a_2\cdots a_{n}\notin\phi(I)$ implies that $a_1\cdots\widehat{a_i}\cdots a_{n+1}\in\sqrt{\phi(I)}\subseteq\sqrt{I}$ for some $1\leq i\leq n$, and so we are done. When $a_1a_2\cdots a_{n+1}\notin\phi(I)$ clearly the result follows.
\end{proof}

We say that a $\phi$-prime ideal $P$ of a ring $R$ is a divided $\phi$-prime ideal if $P\subset xR$ for every $x \in R\backslash P$; thus a divided $\phi$-prime ideal is comparable to every ideal of $R$.
\begin{theorem} 
Let $P$ be a divided $\phi$-prime ideal of a ring $R$. Suppose that $I$ is a $\phi$-$n$-absorbing
ideal of $R$ with $\sqrt{I}=P$ and $\phi(P)\subseteq\phi(I)$. Then $I$ is a $\phi$-primary ideal of R.
\end{theorem}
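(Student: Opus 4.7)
The plan is to take $a,b\in R$ with $ab\in I\backslash\phi(I)$ and, assuming $b\notin\sqrt{I}=P$, derive $a\in I$. The first observation is that $ab\in I\subseteq P$ together with $\phi(P)\subseteq\phi(I)$ forces $ab\in P\backslash\phi(P)$; since $P$ is $\phi$-prime and $b\notin P$, this gives $a\in P$. Because $P$ is divided and $b\notin P$, we have $P\subset bR$, so in particular $a=bc_1$ for some $c_1\in R$.

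Suppose for contradiction that $a\notin I$. I would then construct, by induction on $k$ from $1$ to $n-1$, elements $c_k\in P$ with $a=b^kc_k$. Setting $c_0:=a$, the key point at each step is that $c_{k-1}\notin\phi(P)$: otherwise $a=b^{k-1}c_{k-1}\in R\cdot\phi(P)\subseteq\phi(I)\subseteq I$, contradicting $a\notin I$. Hence the divided hypothesis applied to $c_{k-1}\in P$ lets us write $c_{k-1}=bc_k$ for some $c_k\in R$, and then $bc_k=c_{k-1}\in P\backslash\phi(P)$ together with $b\notin P$ and the $\phi$-prime property of $P$ forces $c_k\in P$. This closes the induction.

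With $k=n-1$ we have $ab=b^nc_{n-1}\in I\backslash\phi(I)$ expressed as a product of $n+1$ elements (namely $n$ copies of $b$ and one $c_{n-1}$). Applying the $\phi$-$n$-absorbing property of $I$, some $n$ of these $n+1$ factors have product in $I$. Excluding the factor $c_{n-1}$ yields $b^n\in I$, so $b\in\sqrt{I}=P$, contradicting $b\notin P$; excluding any one copy of $b$ yields $b^{n-1}c_{n-1}=a\in I$, contradicting $a\notin I$. Either alternative is a contradiction, so $a\in I$, and $I$ is $\phi$-primary. The case $n=1$ is automatic, since $\phi$-$1$-absorbing coincides with $\phi$-prime, which is trivially $\phi$-primary.

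The main obstacle is making the iterative descent $a=bc_1=b^2c_2=\cdots=b^{n-1}c_{n-1}$ rigorous; the hypothesis $\phi(P)\subseteq\phi(I)$ is used precisely to keep $c_{k-1}$ outside $\phi(P)$ at each stage, so that the $\phi$-prime property of $P$ continues to apply and the descent does not stall before the exponent of $b$ reaches $n$, which is exactly the exponent needed to invoke the $\phi$-$n$-absorbing hypothesis on $ab$.
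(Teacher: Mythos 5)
Your proof is correct and follows essentially the same route as the paper's: both arguments use the divided hypothesis to factor $a$ as $b^{n-1}z$ with $z\in P$ (the paper does this in one step after checking $b^{n-1}\notin P$, you do it by iterating $P\subset bR$), and then apply the $\phi$-$n$-absorbing property to $ab=b^{n}z\in I\backslash\phi(I)$, ruling out $b^{n}\in I$ because $b\notin\sqrt{I}$. The only cosmetic difference is that you run the factorization under the contradiction hypothesis $a\notin I$ to keep each $c_{k-1}$ outside $\phi(P)$, whereas the paper uses $b\notin\sqrt{I}$ directly for the same purpose.
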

\begin{proof}
Let $xy\in I\backslash\phi(I)$ for $x,y\in R$ and $y\notin P$. Since $xy\in P\backslash\phi(P)$, then $x\in P$. If $y^{n-1}\in\phi(P)$, then $y\in\sqrt{I}=P$, which is a contradiction. Therefore $y^{n-1}\notin\phi(P)$, and so $y^{n-1}\notin P$. Thus $P\subset y^{n-1}R$, because $P$ is a divided $\phi$-prime ideal of $R$. Hence $x=y^{n-1}z$ for some $z\in R$. As $y^{n}z=yx\in I\backslash\phi(I)$, $y^{n}\notin I$, and $I$ is a $\phi$-$n$-absorbing ideal of $R$, we have $x=y^{n-1}z\in I$. Hence $I$ is a $\phi$-primary ideal of $R$. 
\end{proof}

Let $I$ be an ideal of a ring $R$ and $\phi:\mathfrak{J}(R)\rightarrow \mathfrak{J}(R)\cup\{\emptyset\}$  be a function. Assume that $I$ is a $\phi$-$n$-absorbing primary ideal of $R$ and $a_{1},\dots, a_{n+1}\in R$. We say that $(a_{1},\dots,a_{n+1})$
is an $\phi$-$(n+1)$-tuple of $I$ if $a_{1}\cdots a_{n+1}\in\phi(I)$, $a_{1}a_{2}\cdots a_{n}\notin I$ and for each $1\leq i\leq n$, $a_{1}\cdots\widehat{a_{i}}\cdots a_{n+1}\notin\sqrt{I}$.

In the following theorem $a_{1}\cdots\widehat{a_{i}}\cdots\widehat{a_{j}}\cdots a_{n}$ denotes that $a_{i}$ and $a_{j}$ are eliminated from $a_{1}\cdots a_{n}$.

\begin{theorem}\label{first}
Let $I$ be a $\phi$-$n$-absorbing primary ideal of a ring $R$ and suppose that
$(a_{1},\dots,a_{n+1})$ is a $\phi$-$(n+1)$-tuple of $I$ for some $a_{1},\dots, a_{n+1}\in R$. Then for every elements $\alpha_{1},\alpha_{2},\dots,\alpha_{m}\in\{1,2,\dots,n+1\}$ which $1\leq m\leq n$, $$a_{1}\cdots\widehat{a_{\alpha_{1}}}\cdots\widehat{a_{\alpha_{2}}}\cdots\widehat{a_{\alpha_{m}}}\cdots a_{n+1}I^{m}\subseteq\phi(I).$$
\end{theorem}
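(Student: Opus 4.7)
The plan is to proceed by induction on $m$, the base case $m=1$ being subsumed by the inductive step (the range $1\le|S|\le m-1$ appearing below is empty when $m=1$). Since $I^m$ is additively generated by products $b_1\cdots b_m$ with each $b_i\in I$, it suffices to prove that $c\cdot b_1 b_2\cdots b_m\in\phi(I)$ for arbitrary $b_1,\dots,b_m\in I$, where $c:=a_1\cdots\widehat{a_{\alpha_1}}\cdots\widehat{a_{\alpha_m}}\cdots a_{n+1}$ (the $\alpha_i$ being understood as distinct).

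The central device will be to form the perturbed product
\[
P \;:=\; \prod_{j\notin\{\alpha_1,\dots,\alpha_m\}}\! a_j \;\cdot\; \prod_{k=1}^{m}(a_{\alpha_k}+b_k)
\]
and expand it multilinearly as
\[
P \;=\; \sum_{S\subseteq\{1,\dots,m\}}\Bigl(\prod_{k\in S}b_k\Bigr)\, c_S,
\]
where $c_S$ denotes $a_1\cdots a_{n+1}$ with the factors $a_{\alpha_k}$, $k\in S$, deleted. The summand $S=\emptyset$ equals $a_1\cdots a_{n+1}\in\phi(I)$; each $S$ with $1\le|S|\le m-1$ contributes a term in $\phi(I)$ by the inductive hypothesis (since $|S|<m\le n$); and the top summand $S=\{1,\dots,m\}$ is exactly $c\cdot b_1\cdots b_m$. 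Thus the whole argument reduces to establishing $P\in\phi(I)$.

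Every summand of $P$ except $c_\emptyset=a_1\cdots a_{n+1}$ carries some $b_k\in I$, so $P\in I$. Suppose for contradiction that $P\notin\phi(I)$, and write $P=y_1y_2\cdots y_{n+1}$ in the natural order, where $y_j=a_j+b_k$ when $j=\alpha_k$ and $y_j=a_j$ otherwise. Applying the $\phi$-$n$-absorbing primary property of $I$ to $(y_1,\dots,y_{n+1})$ provides two possibilities: either $y_1\cdots y_n\in I$, or $y_1\cdots\widehat{y_i}\cdots y_{n+1}\in\sqrt{I}$ for some $1\le i\le n$. Both products, upon multilinear expansion, equal a clean monomial ($a_1\cdots a_n$ in the first case, $a_1\cdots\widehat{a_i}\cdots a_{n+1}$ in the second) plus terms each containing some $b_k\in I$. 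The first alternative therefore forces $a_1\cdots a_n\in I$, while the second forces $a_1\cdots\widehat{a_i}\cdots a_{n+1}\in\sqrt{I}$ for some $1\le i\le n$; each outcome directly contradicts the $\phi$-$(n+1)$-tuple hypothesis on $(a_1,\dots,a_{n+1})$. Hence $P\in\phi(I)$, and the expansion of $P$ then extracts $c\cdot b_1\cdots b_m\in\phi(I)$, closing the induction.

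The main obstacle is the asymmetric treatment of $a_{n+1}$ in the definition of $\phi$-$n$-absorbing primary: the $(n+1)$-th factor is never excluded, so one must verify that the argument is uniform regardless of whether $n+1\in\{\alpha_1,\dots,\alpha_m\}$, i.e., regardless of whether the distinguished slot of $P$ is occupied by $a_{n+1}$ itself or by a perturbed factor $a_{n+1}+b_k$. The multilinear bookkeeping resolves this automatically, because every correction term produced by the expansions above contains some $b_k\in I$ and is therefore absorbed by $I$ (respectively $\sqrt{I}$), leaving behind precisely the unperturbed monomials whose forbidden memberships ($a_1\cdots a_n\notin I$ and $a_1\cdots\widehat{a_i}\cdots a_{n+1}\notin\sqrt{I}$ for $1\le i\le n$) are furnished by the $\phi$-$(n+1)$-tuple hypothesis.
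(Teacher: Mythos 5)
Your proof is correct and follows essentially the same route as the paper's: perturb each $a_{\alpha_k}$ to $a_{\alpha_k}+b_k$, expand multilinearly so the induction hypothesis absorbs all intermediate cross terms into $\phi(I)$, and derive a contradiction with the $\phi$-$(n+1)$-tuple conditions. Your choice of the natural ordering of the factors $y_1,\dots,y_{n+1}$ (so the excluded index in the second alternative always lies in $\{1,\dots,n\}$) neatly replaces the paper's case split on whether $\alpha_m=n+1$, but this is a presentational streamlining rather than a different argument.
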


\begin{proof}
We use induction on $m$. Let $m=1$ and suppose that $a_{1}\cdots\widehat{a_{\alpha_{1}}}\cdots a_{n+1} x\notin\phi(I)$ for some $x\in I$. Then $a_{1}\cdots\widehat{a_{\alpha_{1}}}\cdots a_{n+1}(a_{\alpha_{1}}+x)\notin\phi(I)$. Since $I$ is a $\phi$-$n$-absorbing primary ideal of $R$ and
$a_{1}\cdots\widehat{a_{\alpha_{1}}}\cdots a_{n+1}\notin I$, we conclude that $a_{1}\cdots\widehat{a_{\alpha_{1}}}\cdots\widehat{a_{\alpha_{2}}}\cdots a_{n+1}(a_{\alpha_{1}}+x)\in\sqrt{I}$, for some $1\leq\alpha_{2}\leq n+1$ different from $\alpha_{1}$. Hence $a_{1}\cdots\widehat{a_{\alpha_{2}}}\cdots a_{n+1}\in\sqrt{I}$, a contradiction. Thus $a_{1}\cdots\widehat{a_{\alpha_{1}}}\cdots a_{n+1}I\subseteq\phi(I)$.\\
Now suppose $m>1$ and assume that for all integers less than $m$ the claim holds.
Let $a_{1}\cdots\widehat{a_{\alpha_{1}}}\cdots\widehat{a_{\alpha_{2}}}\cdots\widehat{a_{\alpha_{m}}}\cdots a_{n+1}x_{1}x_{2}\cdots x_{m}\notin\phi(I)$ for some $x_{1},x_{2},\dots,x_{m}\in I$. 
By  induction hypothesis, we conclude that there exists $\zeta\in\phi(I)$ such that

{\small $$
\begin{array}{ll}
a_{1}\cdots\widehat{a_{\alpha_{1}}}\cdots &\widehat{a_{\alpha_{2}}}\cdots\widehat{a_{\alpha_{m}}}\cdots
a_{n+1}(a_{\alpha_{1}}+x_{1})(a_{\alpha_{2}}+x_{2})\cdots(a_{\alpha_{m}}+x_{m})\\
&=\zeta+a_{1}\cdots\widehat{a_{\alpha_{1}}}\cdots\widehat{a_{\alpha_{2}}}\cdots\widehat{a_{\alpha_{m}}}\cdots a_{n+1}x_{1}x_{2}\cdots x_{m}\notin\phi(I).
\end{array}$$}
Now, we consider two cases.\\
{\bf Case 1.} Assume that $\alpha_m<n+1$.
Since $I$ is $\phi$-$n$-absorbing primary, then either
\begin{eqnarray*}
a_{1}\cdots\widehat{a_{\alpha_{1}}}\cdots\widehat{a_{\alpha_{2}}}\cdots\widehat{a_{\alpha_{m}}}\cdots
a_{n}(a_{\alpha_{1}}+x_{1})(a_{\alpha_{2}}+x_{2})\cdots(a_{\alpha_{m}}+x_{m})\in I,
 \end{eqnarray*}
or 
 \begin{eqnarray*}
a_{1}\cdots\widehat{a_{\alpha_{1}}}\cdots\widehat{a_{\alpha_{2}}}\cdots\widehat{a_{\alpha_{m}}}\cdots\widehat{a_{j}}\cdots
a_{n+1}(a_{\alpha_{1}}+x_{1})(a_{\alpha_{2}}+x_{2})\cdots(a_{\alpha_{m}}+x_{m})\\\in\sqrt{I},
 \end{eqnarray*}
for some $j< n+1$ distinct from ${\alpha_{i}}$'s; or
 \begin{eqnarray*}
a_{1}\cdots\widehat{a_{\alpha_{1}}}\cdots\widehat{a_{\alpha_{2}}}\cdots\widehat{a_{\alpha_{m}}}\cdots
a_{n+1}(a_{\alpha_{1}}+x_{1})\cdots\widehat{(a_{\alpha_{i}}+x_{i})}\cdots(a_{\alpha_{m}}+x_{m})\in\sqrt{I}
 \end{eqnarray*}
for some $1\leq i\leq m$.
Thus either $a_1a_2\cdots a_{n}\in I$ or $a_{1}\cdots\widehat{a_{j}}\cdots a_{n+1}\in\sqrt{I}$ or $a_{1}\cdots\widehat{a_{\alpha_{i}}}\cdots a_{n+1}\in\sqrt{I}$,
which any of these cases has a contradiction.\\ 
{\bf Case 2.} Assume that $\alpha_m=n+1$. Since $I$ is 
$\phi$-$n$-absorbing primary, then either 
\begin{eqnarray*}
a_{1}\cdots\widehat{a_{\alpha_{1}}}\cdots\widehat{a_{\alpha_{2}}}\cdots\widehat{a_{\alpha_{m-1}}}\cdots
\widehat{a_{n+1}}(a_{\alpha_{1}}+x_{1})(a_{\alpha_{2}}+x_{2})\cdots\widehat{(a_{\alpha_{m}}+x_{m})}\in I,
\end{eqnarray*}
or 
\begin{eqnarray*}
a_{1}\cdots\widehat{a_{\alpha_{1}}}\cdots\widehat{a_{\alpha_{2}}}\cdots\widehat{a_{\alpha_{m-1}}}\cdots\widehat{a_{j}}\cdots
\widehat{a_{n+1}}(a_{\alpha_{1}}+x_{1})(a_{\alpha_{2}}+x_{2})\cdots(a_{\alpha_{m}}+x_{m})\\\in\sqrt{I},
\end{eqnarray*}
for some $j< n+1$ different from ${\alpha_{i}}$'s; or
\begin{eqnarray*}
a_{1}\cdots\widehat{a_{\alpha_{1}}}\cdots\widehat{a_{\alpha_{2}}}\cdots\widehat{a_{\alpha_{m-1}}}\cdots
\widehat{a_{n+1}}(a_{\alpha_{1}}+x_{1})\cdots\widehat{(a_{\alpha_{i}}+x_{i})}\cdots(a_{\alpha_{m}}+x_{m})\\\in\sqrt{I}
\end{eqnarray*}
for some $1\leq i\leq m-1$.
Thus either $a_1a_2\cdots a_{n}\in I$ or $a_{1}\cdots\widehat{a_{j}}\cdots a_{n+1}\in\sqrt{I}$ or $a_{1}\cdots\widehat{a_{\alpha_{i}}}\cdots a_{n+1}\in\sqrt{I}$,
which any of these cases has a contradiction. Thus $$a_{1}\cdots\widehat{a_{\alpha_{1}}}\cdots\widehat{a_{\alpha_{2}}}\cdots\widehat{a_{\alpha_{m}}}\cdots a_{n+1}I^{m}\subseteq\phi(I).$$

\end{proof}

\begin{theorem}\label{nil}
Let $I$ be an $\phi$-$n$-absorbing primary ideal of $R$ that is not an $n$-absorbing primary
ideal. Then
\begin{enumerate}
\item $I^{n+1}\subseteq\phi(I)$.
\item $\sqrt{I}=\sqrt{\phi(I)}$.
\end{enumerate}
\end{theorem}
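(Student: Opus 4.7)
The plan is as follows. Since $I$ is $\phi$-$n$-absorbing primary but not $n$-absorbing primary, I fix a witness $a_1, a_2, \ldots, a_{n+1} \in R$ with $a_1 a_2 \cdots a_{n+1} \in I$, yet $a_1 a_2 \cdots a_n \notin I$ and $a_1 \cdots \widehat{a_i} \cdots a_{n+1} \notin \sqrt{I}$ for every $1 \le i \le n$. The $\phi$-$n$-absorbing primary hypothesis then forces $a_1 a_2 \cdots a_{n+1} \in \phi(I)$, so $(a_1, a_2, \ldots, a_{n+1})$ is a $\phi$-$(n+1)$-tuple of $I$ in the sense of Theorem~\ref{first}.

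For part~(1), let $x_1, x_2, \ldots, x_{n+1} \in I$ be arbitrary; I show $x_1 x_2 \cdots x_{n+1} \in \phi(I)$. The idea is to expand the shifted product
$$(a_1 + x_1)(a_2 + x_2) \cdots (a_{n+1} + x_{n+1}) = \sum_{S \subseteq \{1, \ldots, n+1\}} \prod_{i \in S} a_i \prod_{j \notin S} x_j.$$
The term with $S = \{1, \ldots, n+1\}$ is $a_1 \cdots a_{n+1} \in \phi(I)$ and the term with $S = \emptyset$ is $x_1 \cdots x_{n+1}$. For each mixed $S$ with $1 \le |S| \le n$, Theorem~\ref{first} applied with $m = |S^c|$ and $\{\alpha_1, \ldots, \alpha_m\} = S^c$ gives $\prod_{i \in S} a_i \cdot I^m \subseteq \phi(I)$, so the corresponding cross term lies in $\phi(I)$. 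Hence the shifted product is congruent to $x_1 \cdots x_{n+1}$ modulo $\phi(I)$, and simultaneously every summand lies in $I$ (each mixed term and the $S = \emptyset$ term contain some factor $x_j \in I$, while the pure-$a$ term lies in $\phi(I) \subseteq I$), so the shifted product itself belongs to $I$.

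Suppose now, for a contradiction, that $x_1 \cdots x_{n+1} \notin \phi(I)$. Then $(a_1 + x_1)\cdots(a_{n+1} + x_{n+1}) \in I \setminus \phi(I)$, so the $\phi$-$n$-absorbing primary property yields either $(a_1+x_1)\cdots(a_n+x_n) \in I$, or $(a_1+x_1)\cdots\widehat{(a_i+x_i)}\cdots(a_{n+1}+x_{n+1}) \in \sqrt{I}$ for some $1 \le i \le n$. In the first case, expanding shows the left-hand side equals $a_1 \cdots a_n$ plus a sum of terms each containing some $x_j \in I$, forcing $a_1 \cdots a_n \in I$ and contradicting the $\phi$-tuple property. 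The second case is analogous: the expansion equals $a_1 \cdots \widehat{a_i} \cdots a_{n+1}$ plus terms in $I \subseteq \sqrt{I}$, giving $a_1 \cdots \widehat{a_i} \cdots a_{n+1} \in \sqrt{I}$, again a contradiction. Therefore $x_1 \cdots x_{n+1} \in \phi(I)$, establishing $I^{n+1} \subseteq \phi(I)$.

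Part~(2) is then immediate: $\sqrt{\phi(I)} \subseteq \sqrt{I}$ because $\phi(I) \subseteq I$, while conversely if $y \in \sqrt{I}$ with $y^k \in I$, then $y^{k(n+1)} \in I^{n+1} \subseteq \phi(I)$ by part~(1), so $y \in \sqrt{\phi(I)}$. The key obstacle is the congruence calculation in the second paragraph: without Theorem~\ref{first} absorbing every mixed cross term into $\phi(I)$, one cannot isolate $x_1 \cdots x_{n+1}$ as the sole surviving summand modulo $\phi(I)$, and it is precisely this isolation that lets the $\phi$-$n$-absorbing primary hypothesis be applied to the shifted product to produce the desired contradictions.
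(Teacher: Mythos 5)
Your proof is correct and follows essentially the same route as the paper: both produce a $\phi$-$(n+1)$-tuple witness $(a_1,\dots,a_{n+1})$, use Theorem~\ref{first} to show the shifted product $(a_1+x_1)\cdots(a_{n+1}+x_{n+1})$ equals $x_1\cdots x_{n+1}$ modulo $\phi(I)$, and then derive a contradiction from the $\phi$-$n$-absorbing primary condition. Your write-up merely makes explicit the subset-expansion that the paper compresses into the single element $\zeta\in\phi(I)$.
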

\begin{proof}
(1) Since $I$ is not an $n$-absorbing primary ideal of $R$, $I$ has an $\phi$-$(n+1)$-truple-zero $(a_{1},\dots,a_{n+1})$ for some
$a_{1},\dots,a_{n+1}\in R$. Suppose that $x_{1}x_{2}\cdots x_{n+1}\notin\phi(I)$ for some $x_{1},x_{2},\dots,x_{n+1}\in I$. Then by Theorem \ref{first}, there is $\zeta\in\phi(I)$ such that  $(a_{1}+x_{1})\cdots(a_{n+1}+x_{n+1})=\zeta+x_{1}x_{2}\cdots x_{n+1}\notin\phi(I)$. Hence either $(a_{1}+x_{1})\cdots(a_{n}+x_{n})\in I$ or $(a_{1}+x_{1})\cdots\widehat{(a_{i}+x_{i})}\cdots(a_{n+1}+x_{n+1})\in\sqrt{I}$ for some $1\leq i\leq n$. Thus either
$a_{1}\cdots a_{n}\in I$ or $a_{1}\cdots\widehat{a_{i}}\cdots a_{n+1}\in\sqrt{I}$ for some $1\leq i\leq n$, a contradiction. Hence  $I^{n+1}\subseteq\phi(I)$.\\
(2) Clearly, $\sqrt{\phi(I)}\subseteq\sqrt{I}$. As $I^{n+1}\subseteq\phi(I)$, we get $\sqrt{I}\subseteq\sqrt{\phi(I)}$, as required.
\end{proof}

\begin{corollary}\label{nil2}
Let $I$ be an ideal of a ring $R$  that is not $n$-absorbing primary.
\begin{enumerate}
\item If $I$ is weakly $n$-absorbing primary, then $I^{n+1}=\{0\}$ and $\sqrt{I}=Nil(R)$.
\item If $I$ is $\phi$-$n$-absorbing primary where $\phi\leq\phi_{n+2}$, then $I^{n+1}=I^{n+2}$.
\end{enumerate}
\end{corollary}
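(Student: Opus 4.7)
The plan is to derive both parts as immediate specializations of Theorem \ref{nil}, since the hypothesis ``$I$ is not $n$-absorbing primary'' together with ``$I$ is $\phi$-$n$-absorbing primary'' places us squarely in that theorem's setting. The only work is to identify what $\phi(I)$ is (or what containment it satisfies) for the specific $\phi$ in each part, and then read off the conclusions.

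For part (1), I would note that a weakly $n$-absorbing primary ideal is precisely a $\phi_{0}$-$n$-absorbing primary ideal, so $\phi(I)=\{0\}$. Applying Theorem \ref{nil}(1) gives $I^{n+1}\subseteq\phi(I)=\{0\}$, hence $I^{n+1}=\{0\}$. Applying Theorem \ref{nil}(2) gives $\sqrt{I}=\sqrt{\phi(I)}=\sqrt{\{0\}}=\mathrm{Nil}(R)$.

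For part (2), the assumption $\phi\leq\phi_{n+2}$ means $\phi(I)\subseteq\phi_{n+2}(I)=I^{n+2}$. Theorem \ref{nil}(1) then yields $I^{n+1}\subseteq\phi(I)\subseteq I^{n+2}$, and combining this with the trivial inclusion $I^{n+2}\subseteq I^{n+1}$ gives $I^{n+1}=I^{n+2}$.

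There is no genuine obstacle: the content has already been absorbed into Theorem \ref{nil}, so the corollary is essentially bookkeeping. The only point worth stating carefully is the identification of the ambient $\phi$ in part (1) (reading ``weakly'' as $\phi_{0}$), and the use of the monotonicity of the $\phi_\alpha$ family from the introduction to get $\phi(I)\subseteq I^{n+2}$ in part (2).
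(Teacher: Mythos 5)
Your proposal is correct and is exactly the intended derivation: the paper states this corollary without proof as an immediate consequence of Theorem \ref{nil}, and your specializations ($\phi=\phi_0$ giving $\phi(I)=\{0\}$ in part (1), and $\phi(I)\subseteq I^{n+2}$ combined with the trivial reverse inclusion in part (2)) are precisely the bookkeeping that is being left to the reader.
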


\begin{corollary}
Let $I$ be a $\phi$-$n$-absorbing primary ideal  where $\phi\leq \phi_{n+2}$. Then $I$ is $\omega$-$n$-absorbing primary.
\end{corollary}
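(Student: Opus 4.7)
The plan is to split into two cases according to whether $I$ is $n$-absorbing primary or not. If $I$ is already $n$-absorbing primary, then any product in $I$ (in particular those in $I \setminus \phi_\omega(I)$) satisfies the required conclusion, and we are done for free.

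The substantive case is when $I$ is $\phi$-$n$-absorbing primary but \emph{not} $n$-absorbing primary. In this case, the previous Corollary \ref{nil2}(2) immediately gives $I^{n+1} = I^{n+2}$. First I would iterate this: multiplying through by $I$ shows $I^{n+2} = I^{n+3}$, and inductively $I^{n+1} = I^{n+k}$ for every $k \geq 1$. Combined with the obvious inclusion $\bigcap_{m=1}^{\infty} I^m \subseteq I^{n+1}$, this yields the key identity
\[
\phi_\omega(I) \;=\; \bigcap_{m=1}^{\infty} I^m \;=\; I^{n+1}.
\]

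Next, since $\phi \leq \phi_{n+2}$ we have $\phi(I) \subseteq I^{n+2} = I^{n+1} = \phi_\omega(I)$. Therefore $I \setminus \phi_\omega(I) \subseteq I \setminus \phi(I)$. Now if $a_1 a_2 \cdots a_{n+1} \in I \setminus \phi_\omega(I)$ for some $a_1,\ldots,a_{n+1} \in R$, then a fortiori $a_1 a_2 \cdots a_{n+1} \in I \setminus \phi(I)$, and applying the $\phi$-$n$-absorbing primary hypothesis on $I$ directly delivers either $a_1 \cdots a_n \in I$ or a product of $a_{n+1}$ with $n-1$ of $a_1,\ldots,a_n$ lying in $\sqrt{I}$. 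This is precisely the $\phi_\omega$-$n$-absorbing primary condition.

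I do not anticipate a real obstacle here; the only point requiring care is recording that $\phi_\omega(I) = I^{n+1}$ under the hypothesis, which reduces the statement to an immediate inclusion comparison between $\phi(I)$ and $\phi_\omega(I)$. Everything else is a direct appeal to Corollary \ref{nil2}(2) and the definition of $\phi$-$n$-absorbing primary.
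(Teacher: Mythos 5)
Your proof is correct and follows essentially the same route as the paper: split on whether $I$ is $n$-absorbing primary, and in the nontrivial case use Corollary \ref{nil2}(2) to get $I^{n+1}=I^{n+2}$, hence $\phi_{\omega}(I)=I^{n+1}\supseteq\phi(I)$, so the $\phi$-condition transfers to $\phi_{\omega}$. The only cosmetic difference is that the paper passes through the intermediate observation that $I$ is $\phi_{n+1}$-$n$-absorbing primary with $\phi_{n+1}(I)=\phi_{\omega}(I)$, whereas you compare $\phi(I)$ with $\phi_{\omega}(I)$ directly.
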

\begin{proof}
If $I$ is $n$-absorbing primary, then it is $\omega$-$n$-absorbing primary. So assume that $I$ is not $n$-absorbing primary. Then $I^{n+1}= I^{n+2}$ by Corollary \ref{nil2}(2).  By hypothesis $I$ is $\phi$-$n$-absorbing primary and $\phi\leq \phi_{n+1}$. So $I$ is $\phi_{n+1}$-$n$-absorbing primary. On the other hand
$\phi_{\omega}(I)=I^{n+1}=\phi_{n+1}(I)$. Therefore $I$ is $\omega$-$n$-absorbing primary.
\end{proof}

\begin{theorem}
Let $R$ be a  ring and let $\phi:\mathfrak{J}(R)\rightarrow \mathfrak{J}(R)\cup\{\emptyset\}$ be a function.
Suppost that $\{I_\lambda\}_{\lambda\in \Lambda}$ is a family of ideals of $R$ such that for every $\lambda,{\lambda^\prime}\in\Lambda$, $\sqrt{\phi(I_\lambda)}=\sqrt{\phi(I_{\lambda^\prime})}$ and $\phi(I_\lambda)\subseteq\phi(I)$. If for every $\lambda\in \Lambda$, $I_\lambda$ is a $\phi$-$n$-absorbing primary ideal of $R$ that is not $n$-absorbing primary, then $I=\bigcap_{\lambda\in \Lambda} I_{\lambda}$ is a  $\phi$-$n$-absorbing primary ideal of $R$.
\end{theorem}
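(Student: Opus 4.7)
The plan is to leverage Theorem \ref{nil} in an essential way, since the hypothesis that no $I_\lambda$ is $n$-absorbing primary lets us activate both conclusions of that theorem: $I_\lambda^{n+1}\subseteq\phi(I_\lambda)$ and $\sqrt{I_\lambda}=\sqrt{\phi(I_\lambda)}$. Combined with the assumption that $\sqrt{\phi(I_\lambda)}=\sqrt{\phi(I_{\lambda'})}$ for all $\lambda,\lambda'$, this forces all the radicals $\sqrt{I_\lambda}$ to coincide with a single prime-looking radical, call it $P$.

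Next I would pin down $\sqrt{I}$. The inclusion $\sqrt{I}\subseteq\sqrt{I_\lambda}=P$ is automatic from $I\subseteq I_\lambda$. For the reverse, take $x\in P=\sqrt{\phi(I_\lambda)}$ for any fixed $\lambda$; then $x^k\in\phi(I_\lambda)\subseteq\phi(I)\subseteq I$ for some $k$, so $x\in\sqrt{I}$. Thus $\sqrt{I}=P$, which is exactly the target radical we will need later.

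Now suppose $a_1a_2\cdots a_{n+1}\in I\setminus\phi(I)$. The key observation is that the hypothesis $\phi(I_\lambda)\subseteq\phi(I)$ implies $a_1\cdots a_{n+1}\notin\phi(I_\lambda)$ for every $\lambda$, while $I\subseteq I_\lambda$ gives $a_1\cdots a_{n+1}\in I_\lambda$. So for each $\lambda$ the element lies in $I_\lambda\setminus\phi(I_\lambda)$, and the $\phi$-$n$-absorbing primary property of $I_\lambda$ produces one of the required conclusions: either $a_1\cdots a_n\in I_\lambda$, or $a_1\cdots\widehat{a_i}\cdots a_{n+1}\in\sqrt{I_\lambda}=P=\sqrt{I}$ for some $1\le i\le n$. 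If even one $\lambda$ falls into the second alternative we are finished, since the conclusion $a_1\cdots\widehat{a_i}\cdots a_{n+1}\in\sqrt{I}$ is $\lambda$-free. Otherwise, $a_1\cdots a_n\in I_\lambda$ for every $\lambda$, hence $a_1\cdots a_n\in\bigcap_{\lambda\in\Lambda} I_\lambda=I$.

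There is no real obstacle here; the whole argument is a bookkeeping exercise that rides on Theorem \ref{nil}. The only point that needs care is the identification $\sqrt{I}=P$, which is what turns a local radical statement for each $I_\lambda$ into a global one for $I$; this is precisely where the assumptions $\sqrt{\phi(I_\lambda)}=\sqrt{\phi(I_{\lambda'})}$ and $\phi(I_\lambda)\subseteq\phi(I)$ are both used, so skipping or weakening either would break the proof.
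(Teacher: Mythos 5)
Your proof is correct and follows essentially the same route as the paper: both rest on Theorem \ref{nil}(2) to identify $\sqrt{I}$ with the common radical $\sqrt{I_\lambda}=\sqrt{\phi(I_\lambda)}$, and then apply the $\phi$-$n$-absorbing primary property of each $I_\lambda$ to the element $a_1\cdots a_{n+1}\in I_\lambda\setminus\phi(I_\lambda)$. The only cosmetic difference is that the paper assumes $a_1\cdots a_n\notin I$ and picks a single witnessing $\lambda$, whereas you run the case analysis over all $\lambda$; the two are logically equivalent.
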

\begin{proof}
Since $I_{\lambda}$'s are $\phi$-$n$-absorbing primary but are not $n$-absorbing primary, then for every $\lambda\in\Lambda$, $\sqrt{I_{\lambda}}=\sqrt{\phi(I_\lambda)}$, by Theorem \ref{nil}. On the other hand $\phi(I_\lambda)\subseteq\phi(I)$ for every $\lambda\in\Lambda$,
and so $\sqrt{\phi(I_\lambda)}\subseteq\sqrt{I}$. Hence $\sqrt{I}=\sqrt{I_{\lambda}}=\sqrt{\phi(I_\lambda)}$ for every $\lambda\in\Lambda$. Let $a_1a_2\cdots a_{n+1}\in I\backslash\phi(I)$ for some $a_1,a_2,\dots,a_{n+1}\in R$, and let $a_1a_2\cdots a_n\notin I$. Therefore there is a $\lambda\in\Lambda$
such that $a_1a_2\cdots a_n\notin I_{\lambda}$. Since $I_{\lambda}$ is $\phi$-$n$-absorbing primary and $a_1a_2\cdots a_{n+1}\in I_\lambda\backslash\phi(I_\lambda)$, then $a_1\cdots\widehat{a_i}\cdots a_{n+1}\in\sqrt{I_{\lambda}}=\sqrt{I}$ for some $1\leq i\leq n$. Consequently $I$ is a $\phi$-$n$-absorbing primary ideal of $R$.
\end{proof}

\begin{corollary}
Let $R$ be a  ring, $\phi:\mathfrak{J}(R)\rightarrow \mathfrak{J}(R)\cup\{\emptyset\}$ be a function and $I$ be an ideal of $R$. Suppose that $\sqrt{\phi(I)}=\phi(\sqrt{I})$ that is an $n$-absorbing ideal of $R$. If $I$ is a $\phi$-$n$-absorbing primary ideal of $R$, then $\sqrt{I}$ is an $n$-absorbing ideal of $R$.
\end{corollary}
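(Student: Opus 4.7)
The plan is to assemble this from Theorem \ref{phi} together with the hypothesis that $\phi(\sqrt{I})$ is itself $n$-absorbing, using the standing convention $\phi(J)\subseteq J$ so that $\phi(\sqrt{I})\subseteq \sqrt{I}$.

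First I would apply Theorem \ref{phi} directly: since $I$ is $\phi$-$n$-absorbing primary and $\sqrt{\phi(I)}=\phi(\sqrt{I})$, that theorem yields that $\sqrt{I}$ is a $\phi$-$n$-absorbing ideal of $R$. This already handles the "generic" case of the $n$-absorbing property for $\sqrt{I}$, namely for products that avoid $\phi(\sqrt{I})$.

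Next I would upgrade $\phi$-$n$-absorbing to $n$-absorbing by a dichotomy on whether a given product lies in $\phi(\sqrt{I})$. Take $a_{1},\dots,a_{n+1}\in R$ with $a_{1}\cdots a_{n+1}\in \sqrt{I}$. If $a_{1}\cdots a_{n+1}\notin\phi(\sqrt{I})$, then the $\phi$-$n$-absorbing property of $\sqrt{I}$ supplies $n$ of the $a_{i}$'s whose product lies in $\sqrt{I}$. If instead $a_{1}\cdots a_{n+1}\in\phi(\sqrt{I})$, then by the hypothesis that $\phi(\sqrt{I})$ is $n$-absorbing, some product of $n$ of the $a_{i}$'s lies in $\phi(\sqrt{I})$, and since $\phi(\sqrt{I})\subseteq\sqrt{I}$ (our standing convention) this product is already in $\sqrt{I}$. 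In either case the desired conclusion holds, so $\sqrt{I}$ is $n$-absorbing.

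There is no real obstacle; the only subtlety is to remember to invoke $\phi(\sqrt{I})\subseteq\sqrt{I}$ in the second case, which is legitimate because the paper has fixed the convention $\phi(J)\subseteq J$ in the introduction. The proof is thus a one-line invocation of Theorem \ref{phi} followed by a two-case split using the extra hypothesis on $\phi(\sqrt{I})$.
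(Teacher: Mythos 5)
Your proof is correct, but it takes a genuinely different route from the paper's. The paper argues by cases on whether $I$ is an $n$-absorbing primary ideal: if it is, Theorem \ref{phi} (in its $\phi_{\emptyset}$ incarnation) gives that $\sqrt{I}$ is $n$-absorbing; if it is not, Theorem \ref{nil}(2) forces $\sqrt{I}=\sqrt{\phi(I)}=\phi(\sqrt{I})$, which is $n$-absorbing by hypothesis, so $\sqrt{I}$ is literally equal to an $n$-absorbing ideal. You instead apply Theorem \ref{phi} once with the general $\phi$ to get that $\sqrt{I}$ is $\phi$-$n$-absorbing, and then upgrade to $n$-absorbing by a dichotomy on whether the given product lies in $\phi(\sqrt{I})$, handling the degenerate case by the $n$-absorbing hypothesis on $\phi(\sqrt{I})$ together with $\phi(\sqrt{I})\subseteq\sqrt{I}$. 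Both arguments are sound. Your second step is really a self-contained general lemma (if $J$ is $\phi$-$n$-absorbing and $\phi(J)$ is $n$-absorbing, then $J$ is $n$-absorbing), which is the $n$-absorbing analogue of the paper's earlier proposition that a $\phi$-$n$-absorbing primary ideal with $\phi(I)$ $n$-absorbing primary is itself $n$-absorbing primary; this makes your proof independent of Theorem \ref{nil} and slightly more modular. The paper's version buys the extra (stronger) observation that in the non-$n$-absorbing-primary case the radical collapses all the way down to $\phi(\sqrt{I})$, but for the stated conclusion your shorter route suffices.
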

\begin{proof}
Assume that $I$ is a $\phi$-$n$-absorbing primary ideal of $R$. If $I$ is an $n$-absorbing primary ideal of $R$, then $\sqrt{I}$ is an $n$-absorbing ideal, by Theorem \ref{phi}. If $I$ is not an $n$-absorbing primary ideal of $R$, then by Theorem \ref{nil} and by our hypothesis, $\sqrt{I}=\sqrt{\phi(I)}$ which is an $n$-absorbing ideal.
\end{proof}

\begin{theorem}
Let $I$ be a $\phi$-$n$-absorbing primary ideal of a ring $R$ that is not $n$-absorbing primary and let $J$ be a $\phi$-$m$-absorbing primary ideal of $R$ that is not $m$-absorbing primary, and $n\geq m$. Suppose that the two ideals $\phi(I)$ and $\phi(J)$ are not coprime. Then
\begin{enumerate}
\item $\sqrt{I+J}=\sqrt{\phi(I)+\phi(J)}$.
\item If $\phi(I)\subseteq J$ and $\phi(J)\subseteq\phi(I+J)$, then $I+J$ is a $\phi$-$n$-absorbing primary ideal of $R$.
\end{enumerate}
\end{theorem}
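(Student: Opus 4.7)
For part (1), the strategy is a short radical calculation. Since $I$ and $J$ are $\phi$-$n$- and $\phi$-$m$-absorbing primary respectively but neither is $n$- or $m$-absorbing primary, Theorem~\ref{nil}(2) yields $\sqrt{I}=\sqrt{\phi(I)}$ and $\sqrt{J}=\sqrt{\phi(J)}$. Combined with the elementary identity $\sqrt{A+B}=\sqrt{\sqrt{A}+\sqrt{B}}$ (valid because $A+B\subseteq\sqrt{A}+\sqrt{B}\subseteq\sqrt{A+B}$, after taking radicals throughout), one computes
$$\sqrt{I+J}=\sqrt{\sqrt{I}+\sqrt{J}}=\sqrt{\sqrt{\phi(I)}+\sqrt{\phi(J)}}=\sqrt{\phi(I)+\phi(J)}.$$

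For part (2), the first step is to deduce $\sqrt{I+J}=\sqrt{J}$ by combining (1) with the hypothesis $\phi(I)\subseteq J$: then $\phi(I)+\phi(J)\subseteq J+\phi(J)=J$ and $\phi(J)\subseteq\phi(I)+\phi(J)$ squeeze both radicals to $\sqrt{J}$. The noncoprimality hypothesis on $\phi(I),\phi(J)$ forces $\sqrt{I+J}\neq R$, so $I+J$ is proper. Now take $a_1\cdots a_{n+1}\in (I+J)\setminus\phi(I+J)$ with $a_1\cdots a_n\notin I+J$, and aim to place a product of $a_{n+1}$ with $n-1$ of the remaining $a_i$'s in $\sqrt{I+J}=\sqrt{J}$. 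The hypothesis $\phi(J)\subseteq\phi(I+J)$ guarantees $a_1\cdots a_{n+1}\notin\phi(J)$, and Remark~\ref{rem1}(2) upgrades $J$ to a $\phi$-$n$-absorbing primary ideal (since $n\geq m$). If $a_1\cdots a_{n+1}\in J$, then applying the $\phi$-$n$-absorbing primary property of $J$ directly works: the alternative $a_1\cdots a_n\in J\subseteq I+J$ is ruled out by assumption, so the radical alternative yields what is required. Otherwise $a_1\cdots a_{n+1}\in (I+J)\setminus J$, and we pass to $R/J$, where by Theorem~\ref{nil}(1) the ideal $\bar I=(I+J)/J$ is nilpotent of index at most $n+1$ (since $I^{n+1}\subseteq\phi(I)\subseteq J$). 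Using $a_1\cdots a_{n+1}\in\sqrt{J}$, choose a power $t$ with $(a_1\cdots a_{n+1})^t\in J\setminus\phi(J)$ and apply the $\phi$-$n$-absorbing primary property of $J$ to $a_1^t,\ldots,a_{n+1}^t$; the radical alternative $a_{n+1}^t a_{i_1}^t\cdots a_{i_{n-1}}^t\in\sqrt{J}$ translates immediately to $a_{n+1}a_{i_1}\cdots a_{i_{n-1}}\in\sqrt{I+J}$, as desired.

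The main obstacle is this final subcase of part (2). First, one must guarantee that a power $t$ with $(a_1\cdots a_{n+1})^t\in J\setminus\phi(J)$ actually exists; this requires using the noncoprimality of $\phi(I),\phi(J)$ together with $\sqrt{J}=\sqrt{\phi(J)}$ to exclude the pathological scenario where every power of $a_1\cdots a_{n+1}$ landing in $J$ already lies in $\phi(J)$. Second, the ``bad'' branch $(a_1\cdots a_n)^t\in J$ only gives $a_1\cdots a_n\in\sqrt{I+J}$ rather than the membership $a_1\cdots a_n\in I+J$ demanded by the definition; converting this into a valid conclusion of the $\phi$-$n$-absorbing primary condition for $I+J$ requires either iterating the $\phi$-$n$-absorbing primary property of $J$ on the shorter product $a_1^t\cdots a_n^t$, or exploiting the nilpotence of $\bar I$ in $R/J$ to extract a product-with-$a_{n+1}$ statement in $\sqrt{I+J}$.
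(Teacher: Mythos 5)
Part (1) of your proposal is correct and is exactly the paper's argument: Theorem \ref{nil}(2) gives $\sqrt{I}=\sqrt{\phi(I)}$ and $\sqrt{J}=\sqrt{\phi(J)}$, and the identity $\sqrt{A+B}=\sqrt{\sqrt{A}+\sqrt{B}}$ finishes it. In part (2), your reduction to $\sqrt{I+J}=\sqrt{J}$, the properness of $I+J$, and the case $a_1\cdots a_{n+1}\in J$ (where $J$, upgraded to $\phi$-$n$-absorbing primary via $n\ge m$, is applied directly) are all fine and mirror what the paper does inside Theorem \ref{frac}(4).

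The case $a_1\cdots a_{n+1}\in (I+J)\setminus J$ is, however, not proved, and the two obstacles you name yourself are genuine and are not closed by the remedies you sketch. First, a power $t$ with $(a_1\cdots a_{n+1})^t\in J\setminus\phi(J)$ need not exist: the equality $\sqrt{J}=\sqrt{\phi(J)}$ works \emph{against} you here, not for you, since it says every element of $\sqrt{J}$ already has a power in $\phi(J)$; e.g.\ for $\phi=\phi_0$ the first power of $a_1\cdots a_{n+1}$ that lands in $J$ may simply be $0$. Noncoprimality of $\phi(I)$ and $\phi(J)$ says nothing about this. Second, even granting such a $t$, the branch $(a_1\cdots a_n)^t\in J$ only yields $a_1\cdots a_n\in\sqrt{I+J}$, which is not one of the two alternatives the definition of $\phi$-$n$-absorbing primary permits (it demands the genuine membership $a_1\cdots a_n\in I+J$ there), and neither ``iterating on the shorter product'' nor the nilpotence of $(I+J)/J$ is developed into an argument that produces a product of $a_{n+1}$ with $n-1$ of the $a_i$'s in $\sqrt{I+J}$. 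The paper avoids both problems by never taking powers: in this case it observes that $(I+J)/J\simeq I/(I\cap J)$ with $\phi(I)\subseteq I\cap J$, so by Theorem \ref{frac}(3) the ideal $(I+J)/J$ is \emph{weakly} $n$-absorbing primary in $R/J$; since $a_1\cdots a_{n+1}\notin J$ the image product is nonzero, and the weak property delivers exactly the correct dichotomy ``$a_1\cdots a_n\in I+J$ or a product of $a_{n+1}$ with $n-1$ of the others lies in $\sqrt{(I+J)/J}=\sqrt{I+J}/J$.'' You should replace your power-taking step with this quotient argument (this is the combination of Theorem \ref{frac}(3) and \ref{frac}(4) that the paper cites); without it the proof of part (2) is incomplete.
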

\begin{proof}
(1) By Theorem \ref{nil}, we have $\sqrt{I}=\sqrt{\phi(I)}$ and $\sqrt{J}=\sqrt{\phi(J)}$. Now, by \cite[2.25(i)]{Sh} the result follows.\\
(2) Assume that $\phi(I)\subseteq J$ and $\phi(J)\subseteq\phi(I+J)$. Since $\phi(I)+\phi(J)\neq R$, then $I+J$
is a proper ideal of $R$, by part (1). Since $(I+J)/J\simeq I/(I\cap J)$ and $I$ is $\phi$-$n$-absorbing primary, we get that $(I+J)/J$ is a weakly $n$-absorbing primary ideal of $R/J$, by Theorem \ref{frac}(3). On the other hand $J$ is also $\phi$-$n$-absorbing primary, by Remark \ref{rem1}(6). Now, the assertion follows from Theorem \ref{frac}(4). 
\end{proof}

Let $R$ be a ring and $M$ an $R$-module. A submodule $N$ of $M$ is called a pure submodule
if the sequence $0\rightarrow N\otimes_{R} E\rightarrow M\otimes_{R} E$ is exact for every $R$-module $E$.

As another consequence of Theorem \ref{nil} we have the following corollary.
\begin{corollary}
Let $R$ be a ring. 
\begin{enumerate}
\item If $I$ is a pure $\phi$-$n$-absorbing primary ideal of R that is not $n$-absorbing primary, then $I=\phi(I)$.
\item If $R$ is von Neumann regular ring, then every $\phi$-$n$-absorbing primary ideal of $R$ that is not
$n$-absorbing primary is of the form $\phi(I)$ for some ideal $I$ of $R$.
\end{enumerate}
\end{corollary}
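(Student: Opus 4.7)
The plan is to combine Theorem \ref{nil}(1) with the idempotency of pure ideals, which collapses both parts to a one-line argument.

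For part (1), assume $I$ is pure, $\phi$-$n$-absorbing primary, and not $n$-absorbing primary. Theorem \ref{nil}(1) immediately yields $I^{n+1}\subseteq\phi(I)$. I would then show that any pure ideal $I$ is idempotent, i.e.\ $I^{2}=I$. Applying the definition of purity with $E=R/I$, the map $I\otimes_{R}(R/I)\to R\otimes_{R}(R/I)=R/I$ must be injective; but it is the zero map, so $I\otimes_{R}(R/I)=0$. Tensoring the short exact sequence $0\to I\to R\to R/I\to 0$ with $I$ identifies $I\otimes_{R}(R/I)$ with $I/I^{2}$, giving $I=I^{2}$, hence by induction $I=I^{n+1}\subseteq\phi(I)\subseteq I$, so $I=\phi(I)$.

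For part (2), I would use that every ideal of a von Neumann regular ring is pure. Indeed, given $x\in R$, there exists $y\in R$ with $xyx=x$; then $e:=yx$ is idempotent and $xe=x$, so each finitely generated ideal is generated by idempotents and is a direct summand of $R$, hence pure. Since an arbitrary ideal of $R$ is a directed union of finitely generated ones, and a directed union of pure ideals is pure, every ideal of $R$ is pure. Invoking part (1), every $\phi$-$n$-absorbing primary ideal of $R$ that is not $n$-absorbing primary satisfies $I=\phi(I)$, and in particular is of the form $\phi(J)$ with $J=I$.

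The only substantive step is the idempotency of pure ideals in part (1); I expect that to be the main (but mild) obstacle. The tensor-product argument with $E=R/I$ is the cleanest route, since it derives $I=I^{2}$ directly from the paper's definition of purity without appealing to any external characterization of pure ideals.
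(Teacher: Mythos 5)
Your proposal is correct and follows the same route as the paper: the paper's proof is just the remark that pure ideals are idempotent (citing Fieldhouse) and that every ideal of a von Neumann regular ring is idempotent, combined with Theorem \ref{nil}(1) to get $I=I^{n+1}\subseteq\phi(I)\subseteq I$. You simply supply the details the paper cites away — the tensor argument with $E=R/I$ for idempotency of pure ideals, and (in part (2)) a slightly more roundabout detour through purity where the paper uses idempotency of ideals in von Neumann regular rings directly — and both fillings are sound.
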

\begin{proof}
Note that every pure ideal is idempotent (see \cite{F}), also every ideal of a von Neumann regular ring is idempotent.
\end{proof}

\begin{theorem}
Let $R$ be a ring and $\phi:\mathfrak{J}(R)\rightarrow \mathfrak{J}(R)\cup\{\emptyset\}$ be a function. Let
$I$ be a $\phi$-$(n-1)$-absorbing primary ideal of $R$ that is not $(n-1)$-absorbing primary, and $J$ be an ideal of 
$R$ such that $J\subseteq I$ with $\phi(I)\subseteq\phi(J)$. Then $J$ is a $\phi$-$n$-absorbing primary ideal of $R$.
\end{theorem}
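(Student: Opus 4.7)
The plan is to leverage Theorem~\ref{nil} to translate the hypothesis ``$I$ is $\phi$-$(n-1)$-absorbing primary but not $(n-1)$-absorbing primary'' into the concrete identities $I^{n}\subseteq\phi(I)$ and $\sqrt{I}=\sqrt{\phi(I)}$, and then reduce an $(n+1)$-factor absorbing test for $J$ to an $n$-factor absorbing test for $I$ via a simple grouping trick.

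First I would establish $\sqrt{J}=\sqrt{I}$. The inclusion $\sqrt{J}\subseteq\sqrt{I}$ is immediate from $J\subseteq I$. For the reverse inclusion, the chain $I^{n}\subseteq\phi(I)\subseteq\phi(J)\subseteq J$ forces $I\subseteq\sqrt{J}$, hence $\sqrt{I}\subseteq\sqrt{J}$. This equality is the essential bridge between the two radicals, and it is exactly what the failure of $I$ to be $(n-1)$-absorbing primary buys us.

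Next, to verify that $J$ is $\phi$-$n$-absorbing primary, take $a_{1}a_{2}\cdots a_{n+1}\in J\setminus\phi(J)$. Since $J\subseteq I$ and $\phi(I)\subseteq\phi(J)$, this product lies in $I\setminus\phi(I)$. Group the factors by setting $c_{i}=a_{i}$ for $1\leq i\leq n-1$ and $c_{n}=a_{n}a_{n+1}$, so that $c_{1}c_{2}\cdots c_{n}=a_{1}a_{2}\cdots a_{n+1}\in I\setminus\phi(I)$ is a product of exactly $n$ elements. Invoking the $\phi$-$(n-1)$-absorbing primary property of $I$ yields one of two outcomes. In the first, $c_{1}\cdots c_{n-1}=a_{1}\cdots a_{n-1}\in I\subseteq\sqrt{J}$, and multiplying by $a_{n+1}$ gives $a_{1}\cdots a_{n-1}a_{n+1}\in\sqrt{J}$, which is the required condition omitting $a_{n}$. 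In the second, $c_{1}\cdots\widehat{c_{j}}\cdots c_{n}\in\sqrt{I}=\sqrt{J}$ for some $j\in\{1,\ldots,n-1\}$, and unfolding $c_{n}=a_{n}a_{n+1}$ gives exactly $a_{1}\cdots\widehat{a_{j}}\cdots a_{n+1}\in\sqrt{J}$, the required condition omitting $a_{j}$.

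There is no substantive obstacle beyond selecting the right grouping: the choice $c_{n}=a_{n}a_{n+1}$ is what makes Case~A supply the missing-$a_{n}$ conclusion while Case~B sweeps through $j\in\{1,\ldots,n-1\}$, thereby covering every index $i\in\{1,\ldots,n\}$ demanded by the definition of $\phi$-$n$-absorbing primary ideal.
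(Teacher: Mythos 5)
Your proof is correct. Both you and the paper rest on the same key lemma (Theorem~\ref{nil}, which converts ``$\phi$-$(n-1)$-absorbing primary but not $(n-1)$-absorbing primary'' into $I^{n}\subseteq\phi(I)$ and $\sqrt{I}=\sqrt{\phi(I)}$, hence $\sqrt{I}=\sqrt{J}$), and both then reduce the $(n+1)$-factor test for $J$ to an application of the hypothesis on $I$; but the reductions differ. The paper splits into two cases according to whether $a_{1}\cdots a_{n}\in I$: if not, it upgrades $I$ to a $\phi$-$n$-absorbing primary ideal via Remark~\ref{rem1}(2) and applies that directly to $a_{1},\dots,a_{n+1}$; if so, it applies the $\phi$-$(n-1)$-absorbing primary property to the $n$-fold product $a_{1}\cdots a_{n}\in I\setminus\phi(I)$ and then multiplies the resulting conclusion by $a_{n+1}$. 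Your grouping $c_{n}=a_{n}a_{n+1}$ collapses these two cases into a single application of the $\phi$-$(n-1)$-absorbing primary property to $c_{1}\cdots c_{n}$, with the two outcomes of that single application landing exactly on the two families of conclusions required (omitting $a_{n}$, respectively omitting some $a_{j}$ with $j<n$). The payoff of your version is brevity and the avoidance of the auxiliary upgrade step; the paper's version makes explicit that when $a_{1}\cdots a_{n}\notin I$ one can also cite the already-established monotonicity in $n$ of the $\phi$-absorbing primary property. Both are complete; yours is arguably the cleaner argument.
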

\begin{proof}
Since $I$ is a $\phi$-$(n-1)$-absorbing primary ideal that is not $(n-1)$-absorbing primary we have $\sqrt{I}=\sqrt{\phi(I)}$, by Theorem \ref{nil}. Hence
$\sqrt{J}=\sqrt{I}=\sqrt{\phi(I)}$. Let $a_1a_2\cdots a_{n+1}\in J\backslash\phi(J)$ for some $a_1,a_2,\dots, a_{n+1}\in R$ such that $a_1a_2\cdots a_{n}\notin J$.
Since $J\subseteq I$, we have $a_1a_2\cdots a_{n+1}\in I\backslash\phi(I)$. Consider two cases.\\
{\bf Case 1.} Assume that $a_1a_2\cdots a_{n}\notin I$. Since $I$ is $\phi$-$(n-1)$-absorbing primary, then it is $\phi$-$n$-absorbing primary, by Remark \ref{rem1}(6). Hence $a_1\cdots\widehat{a_i}\cdots a_{n+1}\in\sqrt{I}=\sqrt{J}$ for some $1\leq i\leq n$.\\
{\bf Case 2.} Assume that $a_1a_2\cdots a_{n}\in I$. Since $a_1a_2\cdots a_{n+1}\in I\backslash\phi(I)$, we have that $a_1a_2\cdots a_{n}\in I\backslash\phi(I)$.
On the other hand $I$ is a $\phi$-$(n-1)$-absorbing primary ideal, so either $a_1a_2\cdots a_{n-1}\in I\subseteq\sqrt{J}$ or $a_1\cdots\widehat{a_i}\cdots a_{n}\in\sqrt{I}=\sqrt{J}$
for some $1\leq i\leq {n-1}$. Hence $a_1\cdots\widehat{a_i}\cdots a_{n+1}\in\sqrt{J}$ for some $1\leq i\leq n$. Consequently $J$ is a $\phi$-$n$-absorbing primary ideal of $R$.
\end{proof}

\section{$\phi$-$n$-absorbing primary ideals in direct products of commutative rings}

\begin{theorem}
Let $R_1$ and $R_2$ be  rings, and let $I$ be a weakly $n$-absorbing primary ideal of
$R_1$. Then $J=I\times R_2$ is a $\phi$-$n$-absorbing primary ideal of $R=R_1\times R_2$ for each $\phi$ with
$\phi_{\omega}\leq\phi\leq\phi_1$.
\end{theorem}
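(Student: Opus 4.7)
The plan is to reduce everything to the hypothesis that $I$ is weakly $n$-absorbing primary in $R_1$, by showing that in our setting, any $(n+1)$-fold product landing in $J\setminus\phi(J)$ forces its first-coordinate product to be nonzero, after which the weakly $n$-absorbing primary hypothesis on $I$ finishes the job.

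First I would gather the basic computations on the direct product. Since $R_2$ has an identity, one checks inductively that $J^m=I^m\times R_2$ for every $m\geq1$, so
\[
\phi_\omega(J)=\bigcap_{m=1}^\infty J^m=\Bigl(\bigcap_{m=1}^\infty I^m\Bigr)\times R_2,
\]
and similarly $\sqrt{J}=\sqrt{I}\times R_2$. Also $J$ is proper because $I$ is proper (a weakly $n$-absorbing primary ideal is by definition a proper ideal). The crucial observation is that $\phi_\omega(J)$ contains every element whose first coordinate is $0$, regardless of the second coordinate: if $x=0$ then $(x,y)\in\bigl(\bigcap_m I^m\bigr)\times R_2$.

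Next, take $a_1,\dots,a_{n+1}\in R$ with $a_1\cdots a_{n+1}\in J\setminus\phi(J)$, and write $a_i=(x_i,y_i)$. From $\phi_\omega\leq\phi$ we get $\phi_\omega(J)\subseteq\phi(J)$, hence $a_1\cdots a_{n+1}\notin\phi_\omega(J)$. By the observation above, this forces $x_1x_2\cdots x_{n+1}\neq0$ in $R_1$. Combined with $x_1\cdots x_{n+1}\in I$ (from the first coordinate of $a_1\cdots a_{n+1}\in J$), the weakly $n$-absorbing primary hypothesis on $I$ yields either $x_1\cdots x_n\in I$, or $x_1\cdots\widehat{x_i}\cdots x_{n+1}\in\sqrt{I}$ for some $1\leq i\leq n$.

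Translating back, the first alternative gives $a_1\cdots a_n=(x_1\cdots x_n,\,y_1\cdots y_n)\in I\times R_2=J$, while the second gives $a_1\cdots\widehat{a_i}\cdots a_{n+1}\in \sqrt{I}\times R_2=\sqrt{J}$. Either way, the $\phi$-$n$-absorbing primary condition holds for $J$.

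The only real step is the realization that $(0,y)\in\phi_\omega(J)$ for every $y\in R_2$; once that is in hand, the lower bound $\phi_\omega\leq\phi$ converts the hypothesis ``$a_1\cdots a_{n+1}\notin\phi(J)$'' into the nonvanishing needed to invoke the weakly $n$-absorbing primary property of $I$. The upper bound $\phi\leq\phi_1$ is used only implicitly to ensure $\phi(J)\subseteq J$, so that $J\setminus\phi(J)$ sits inside $J$ in the usual way.
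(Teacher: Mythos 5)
Your proof is correct and follows the same overall strategy as the paper's: reduce to the weakly $n$-absorbing primary hypothesis on $I$ by showing that any $(n+1)$-fold product lying in $J\setminus\phi(J)$ must have nonzero first-coordinate product in $R_1$. The difference is in how the key containment is justified. The paper first disposes of the case where $I$ is $n$-absorbing primary, and in the remaining case invokes its earlier corollary (that a weakly $n$-absorbing primary ideal which is not $n$-absorbing primary satisfies $I^{n+1}=\{0\}$) to compute $\phi_{\omega}(J)=\{0\}\times R_2$ exactly. You instead observe directly that $\{0\}\times R_2\subseteq\bigl(\bigcap_{m}I^m\bigr)\times R_2=\phi_{\omega}(J)\subseteq\phi(J)$, simply because $0$ lies in every power of $I$; only this one inclusion is needed, so the case split and the appeal to the corollary become unnecessary. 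Your supporting identifications $J^m=I^m\times R_2$, $\phi_{\omega}(J)=\bigl(\bigcap_m I^m\bigr)\times R_2$, and $\sqrt{J}=\sqrt{I}\times R_2$ are all correct, and both arguments use the hypothesis $\phi_{\omega}\leq\phi$ in the same way. Your route is the more economical of the two.
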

\begin{proof}
Suppose that $I$ is a weakly $n$-absorbing primary ideal of
$R_1$. If $I$ is $n$-absorbing primary, then $J$ is $n$-absorbing primary and hence is $\phi$-$n$-absorbing primary, for all
$\phi$. Assume that $I$ is not $n$-absorbing primary. Then $I^{n+1}=\{0\}$, Corollary \ref{nil2}(1). Hence $J^{n+1}=\{0\}\times R_2$ and hence $\phi_{\omega}(J)=\{0\}\times R_2$. Therefore, $J\backslash\phi_{\omega}(J)=(I\backslash\{0\})\times R_2$. 
Let $(x_1,y_1)(x_2,y_2)\cdots(x_{n+1},y_{n+1})\in J\backslash\phi_{\omega}(J)$ for some $x_1,x_2,\dots,x_{n+1}\in R_1$
and $y_1,y_2,\dots,y_{n+1}\in R_2$. Then clearly $x_1x_2\cdots x_{n+1}\in I\backslash\{0\}$. Since $I$ is weakly $n$-absorbing primary,
either $x_1\cdots x_n\in I$ or $x_1\cdots\widehat{x_i}\cdots x_{n+1}\in\sqrt{I}$ for some $1\leq i\leq n$. Therefore, either 
$(x_1,y_1)\cdots(x_n,y_n)\\\in J=I\times R_2$ or $(x_1,y_1)\cdots\widehat{(x_i,y_i)}\cdots (x_{n+1},y_{n+1})\in\sqrt{J}=\sqrt{I}\times R_2$ for some $1\leq i\leq n$. Consequently $J$ is a ${\omega}$-$n$-absorbing primary and hence $\phi$-$n$-absorbing primary. 
\end{proof}

\begin{theorem}
Let $R$ be a  ring and $J$ be a finitely generated proper ideal of $R$. Suppose that $J$ is $\phi$-$n$-absorbing primary, where  $\phi\leq\phi_{n+2}$. Then, either $J$ is weakly $n$-absorbing primary or $J^{n+1}\neq 0$ is idempotent and $R$ decomposes as $R_1\times R_2$ where $R_2=J^{n+1}$ and $J=I\times R_2$, where $I$ is weakly $n$-absorbing primary.
\end{theorem}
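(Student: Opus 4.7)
The plan is to argue by contrapositive: assume $J$ is not weakly $n$-absorbing primary and extract the claimed product decomposition. The first reduction is immediate: every $n$-absorbing primary ideal is weakly $n$-absorbing primary (by taking the witness set to be all $a_1\cdots a_{n+1}\in I$, not just those outside $\{0\}$), so $J$ is also not $n$-absorbing primary. Applying Corollary \ref{nil2}(2) to $J$ then yields $J^{n+1}=J^{n+2}$, and iterating this equality gives $(J^{n+1})^2=J^{2n+2}=J^{n+1}$. Thus $J^{n+1}$ is an idempotent ideal of $R$.

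Next I would show that $J^{n+1}$ is nonzero and produce an idempotent generator. If $J^{n+1}=0$, then the hypothesis $\phi\leq\phi_{n+2}$ forces $\phi(J)\subseteq\phi_{n+2}(J)=J^{n+2}=0$, which would make $J$ weakly $n$-absorbing primary—contrary to assumption. Hence $J^{n+1}\neq 0$. Since $J$ is finitely generated, so is $J^{n+1}$, and a standard application of Nakayama's lemma (the determinant trick) to the equality $J^{n+1}\cdot J^{n+1}=J^{n+1}$ yields an idempotent $e\in J^{n+1}$ with $J^{n+1}=eR$. Setting $R_1=(1-e)R$ and $R_2=eR$, we obtain $R\cong R_1\times R_2$ with $R_2=J^{n+1}$.

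Because $e\in J^{n+1}\subseteq J$, we have $R_2=eR\subseteq J$, and consequently $J$ corresponds under the decomposition to $I\times R_2$, where $I:=(1-e)J$ is an ideal of $R_1$; as $J$ is proper, so is $I$. It remains to verify that $I$ is weakly $n$-absorbing primary in $R_1$. The crucial containment is $\phi(J)\subseteq\phi_{n+2}(J)=J^{n+2}=J^{n+1}=R_2$, so under the identification $R\cong R_1\times R_2$, $\phi(J)$ lies in $\{0\}\times R_2$. Given $a_1,\dots,a_{n+1}\in R_1$ with $0\neq a_1\cdots a_{n+1}\in I$, the lifted elements $(a_j,0)\in R$ satisfy $(a_1,0)\cdots(a_{n+1},0)=(a_1\cdots a_{n+1},0)\in J$, and this product lies outside $\phi(J)$ because its first coordinate is nonzero. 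Applying the $\phi$-$n$-absorbing primary property of $J$, together with $\sqrt{J}=\sqrt{I}\times R_2$, and projecting to the first coordinate yields either $a_1\cdots a_n\in I$ or $a_1\cdots\widehat{a_i}\cdots a_{n+1}\in\sqrt{I}$ for some $1\leq i\leq n$, as required.

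The main technical obstacle is the coordinate bookkeeping in the last step: one must combine the containment $\phi(J)\subseteq\{0\}\times R_2$ (which uses both $\phi\leq\phi_{n+2}$ and the equality $J^{n+1}=J^{n+2}$) with the correct decomposition of $\sqrt{J}$ so that the weakly-$n$-absorbing-primary witnesses in $R$ descend cleanly to $R_1$. The Nakayama step producing the idempotent generator is standard, and the rest of the argument is driven by the two equalities $\phi(J)\subseteq J^{n+1}$ and $J^{n+1}=R_2$.
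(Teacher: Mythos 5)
Your proposal is correct and follows essentially the same route as the paper's proof: reduce to the case that $J$ is not $n$-absorbing primary, use Corollary \ref{nil2}(2) to get $J^{n+1}=J^{n+2}$ and hence idempotency, split on whether $J^{n+1}=0$, extract an idempotent generator $e$ from the finitely generated idempotent ideal $J^{n+1}$, and verify that $I=(1-e)J$ is weakly $n$-absorbing primary by lifting $(a_1,0)\cdots(a_{n+1},0)$ to $J\backslash\phi(J)$ via the containment $\phi(J)\subseteq J^{n+1}=\{0\}\times R_2$. The only cosmetic difference is that you phrase the argument contrapositively and name the determinant-trick step explicitly, which the paper leaves implicit.
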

\begin{proof}
If $J$ is $n$-absorbing primary, then $J$ is weakly $n$-absorbing primary. So we can assume that $J$ is not $n$-absorbing primary. Then by Corollary \ref{nil2}(2),  $J^{n+1}=J^{n+2}$ and hence $J^{n+1}=J^{2(n+1)}$. Thus $J^{n+1}$ is idempotent, since $J^{n+1}$ is finitely generated, $J^{n+1}=\langle e\rangle$ for some idempotent element $e\in R$. Suppose $J^{n+1}=0$. So $\phi(J)=0$, and hence $J$ is weakly $n$-absorbing primary. Assume that $J^{n+1}\neq 0$. Put $R_2=J^{n+1}=Re$ and $R_1=R(1-e)$; hence $R=R_1\times R_2$. Let $I=J(1-e)$, so $J=I\times R_2$, where $I^{n+1}=0$. We show that $I$ is weakly $n$-absorbing primary. Let $x_1,x_2,\dots, x_{n+1}\in R$ and $x_1x_2\cdots x_{n+1}\in I\backslash\{0\}$
such that $x_1x_2\cdots x_{n}\notin I$. So $(x_1,0)(x_2,0)\cdots(x_{n+1},0)=(x_1x_2\cdots x_{n+1},0)\in I\times R_2=J$.
Since $J^{n+1}=\{0\}\times R_2$ and $\phi(J)\subseteq J^{n+1}$, then $(x_1,0)(x_2,0)\cdots(x_{n+1},0)=(x_1x_2\cdots x_{n+1},0)\in J\backslash\phi(J)$. Since $J$ is $\phi$-$n$-absorbing primary, so either $(x_1,0)(x_2,0)\cdots(x_{n},0)=(x_1x_2\cdots x_{n},0)\in I\times R_2=J$ or $(x_1,0)\cdots\widehat{(x_i,0)}\cdots(x_{n+1},0)=(x_1\cdots\widehat{x_i}\cdots x_{n+1},0)\\\in \sqrt{I}\times R_2=\sqrt{J}$ for some $1\leq i\leq n$. The first case implies that $x_1x_2\cdots x_{n}\in I$, which is a contradiction. The second case implies that $x_1\cdots\widehat{x_i}\cdots x_{n+1}\in\sqrt{I}$ for some $1\leq i\leq n$. Consequently $I$ is weakly $n$-absorbing primary.
\end{proof}

\begin{corollary}
Let $R$ be an indecomposable  ring and $J$ a finitely generated $\phi$-$n$-absorbing primary ideal of $R$, where  $\phi\leq\phi_{n+2}$. Then $J$ is weakly $n$-absorbing primary. Furthermore, if $R$ is an integral domain, then $J$ is actually
$n$-absorbing primary.
\end{corollary}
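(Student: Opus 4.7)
The plan is to appeal directly to the preceding theorem and dispose of each case using the indecomposability (and later the integral domain) hypothesis.

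First I would invoke the theorem immediately above the corollary. Since $J$ is a finitely generated $\phi$-$n$-absorbing primary ideal of $R$ with $\phi\leq\phi_{n+2}$, that theorem yields a dichotomy: either $J$ is already weakly $n$-absorbing primary, or $J^{n+1}\neq 0$ is idempotent and $R$ decomposes as $R=R_1\times R_2$ with $R_2=J^{n+1}$ and $J=I\times R_2$ for some weakly $n$-absorbing primary ideal $I$ of $R_1$. The second alternative would force the nonzero ring decomposition $R\cong R_1\times R_2$, contradicting indecomposability of $R$ (note that $R_2=J^{n+1}\neq 0$ rules out the trivial decomposition). Hence only the first alternative survives, and $J$ is weakly $n$-absorbing primary.

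For the second statement, assume in addition that $R$ is an integral domain. Every integral domain is indecomposable, so the first part applies and $J$ is weakly $n$-absorbing primary. Suppose, towards a contradiction, that $J$ is not $n$-absorbing primary. Then Corollary \ref{nil2}(1) gives $J^{n+1}=\{0\}$. Since $R$ is a domain, this forces $J=\{0\}$. But the zero ideal of a domain is prime, hence $n$-absorbing primary (indeed, if $a_1\cdots a_{n+1}=0$ in a domain then some $a_j=0$, and one immediately verifies the $n$-absorbing primary condition). This contradicts the assumption, so $J$ must be $n$-absorbing primary.

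The argument is essentially cosmetic once the preceding theorem is in hand; there is no real obstacle. The only thing worth taking care of is to note explicitly that an integral domain is indecomposable (no non-trivial idempotents), so the first half of the corollary genuinely applies before we can push the conclusion to $n$-absorbing primary in the domain case.
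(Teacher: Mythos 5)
Your proof is correct and follows the route the paper clearly intends (the corollary is stated without proof, being an immediate consequence of the preceding theorem together with Corollary \ref{nil2}(1)): indecomposability kills the second alternative of the dichotomy, and in a domain $J^{n+1}=\{0\}$ forces $J=\{0\}$, which is prime and hence $n$-absorbing primary. The only cosmetic addition worth making is that the decomposition in the second alternative is genuinely nontrivial on both sides --- $R_2=J^{n+1}\neq 0$ by hypothesis of that case, and $R_1\neq 0$ because $J$ is proper so the idempotent generating $J^{n+1}$ cannot be $1$ --- but this is immediate.
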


\begin{corollary}
Let $R$ be a Noetherian integral domain. A proper ideal $J$ of $R$ is $n$-absorbing primary if and only if it is $(n+2)$-almost $n$-absorbing primary.
\end{corollary}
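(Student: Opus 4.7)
The forward direction is essentially immediate, so my plan is to handle it in one sentence: any $n$-absorbing primary ideal is $\phi$-$n$-absorbing primary for every $\phi$ (one just ignores the $\phi(I)$ condition), and in particular it is $(n+2)$-almost $n$-absorbing primary (the case $\phi=\phi_{n+2}$).

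For the converse, my plan is to reduce directly to the preceding corollary, which asserts that if $R$ is an indecomposable ring and $J$ is a finitely generated $\phi$-$n$-absorbing primary ideal with $\phi\leq\phi_{n+2}$, then $J$ is weakly $n$-absorbing primary, and further if $R$ is an integral domain, then $J$ is $n$-absorbing primary. So the proof amounts to verifying the three hypotheses in our setting: first, $R$ is Noetherian so $J$ is finitely generated; second, $R$ is an integral domain so it contains no nontrivial idempotents and is therefore indecomposable (any decomposition $R=R_1\times R_2$ would produce a zero-divisor $(1,0)$); third, we are given that $J$ is $\phi_{n+2}$-$n$-absorbing primary, which satisfies $\phi_{n+2}\leq\phi_{n+2}$ trivially.

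Having verified these three hypotheses, the preceding corollary applies directly and yields that $J$ is $n$-absorbing primary. Since there are no real obstacles and no intermediate machinery needs to be invoked beyond the previous corollary, I do not anticipate any hard step; the only thing worth noting is that the hypothesis ``$R$ is an integral domain'' is used in two places (to get indecomposability, and to upgrade ``weakly $n$-absorbing primary'' to ``$n$-absorbing primary'' inside the previous corollary), both of which are already subsumed by the previous corollary's statement.
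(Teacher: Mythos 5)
Your proof is correct and follows exactly the route the paper intends: the corollary is stated immediately after (and as a consequence of) the corollary on finitely generated $\phi$-$n$-absorbing primary ideals of indecomposable rings with $\phi\leq\phi_{n+2}$, and your verification of the three hypotheses (Noetherian $\Rightarrow$ finitely generated, domain $\Rightarrow$ indecomposable, $\phi_{n+2}\leq\phi_{n+2}$) together with the trivial forward implication is precisely the intended argument.
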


\begin{theorem}\label{T4}
Let $R= R_1\times\cdots\times R_s$ be a decomposable  ring and $\psi_i : {\mathfrak{I}}(R_i)\to {\mathfrak{I}}(R_i)\cup \{\emptyset\}$ be a function for $i= 1, 2,\dots,s$. Set $\phi=\psi_1\times\psi_2\times\cdots\times\psi_n$. Suppose that $$L=I_{1}\times\dots\times I_{\alpha_{1}-1}\times R_{\alpha_{1}}\times I_{\alpha_{1}+1}\times\dots\times I_{\alpha_{j}-1}\times R_{\alpha_{j}}\times I_{\alpha_{j}+1}\times\cdots\times I_{s}$$ be an ideal of $R$ in which $\{\alpha_{1},\dots,\alpha_{j}\}\subset\{1,\dots,s\}$. Moreover, suppose that $\psi_{\alpha_i}(R_{\alpha_i})\neq R_{\alpha_i}$ for some $\alpha_i\in\{\alpha_{1},\dots,\alpha_{j}\}$. The following conditions are equivalent:
\begin{enumerate}
\item $L$ is a $\phi$-$n$-absorbing ideal of $R$;
\item $L$ is an $n$-absorbing primary ideal of $R$;
\item $L':=I_{1}\times\dots\times I_{\alpha_{1}-1}\times I_{\alpha_{1}+1}\times\dots\times I_{\alpha_{j}-1}\times I_{\alpha_{j}+1}\times\cdots\times I_{s}$ is an $n$-absorbing primary ideal of 
$$\hspace{1cm}R':=R_{1}\times\dots\times R_{\alpha_{1}-1}\times R_{\alpha_{1}+1}\times\dots\times R_{\alpha_{j}-1}\times R_{\alpha_{j}+1}\times\cdots\times R_{s}.$$
\end{enumerate}
\end{theorem}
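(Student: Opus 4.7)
The plan is to split the three-way equivalence into two manageable pieces: a quotient-ring argument for (2)$\Leftrightarrow$(3), and a coordinate-perturbation trick for (1)$\Leftrightarrow$(2) that uses the hypothesis $\psi_{\alpha_i}(R_{\alpha_i})\neq R_{\alpha_i}$ as the single source of elements witnessing membership outside $\phi(L)$. The direction (2)$\Rightarrow$(1) is essentially free because $L\setminus\phi(L)\subseteq L$, so any product in $L\setminus\phi(L)$ inherits the conclusion from the unqualified $n$-absorbing primary property; the content lies in (1)$\Rightarrow$(2), where one must manufacture products that land in $L\setminus\phi(L)$ starting from an arbitrary product in $L$.

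For (2)$\Leftrightarrow$(3) I would observe that the projection $R\to R'$ forgetting the coordinates at positions $\alpha_{1},\dots,\alpha_{j}$ descends to a ring isomorphism $R/L\cong R'/L'$, since the $\alpha_{l}$-slots of $L$ are precisely $R_{\alpha_{l}}$. Because being $n$-absorbing primary depends only on the quotient (it can be restated as a condition on $n+1$-fold zero products in $R/L$), this isomorphism transports the property back and forth between $L\subseteq R$ and $L'\subseteq R'$.

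For (1)$\Rightarrow$(2) the key is the following perturbation. Pick $y\in R_{\alpha_i}\setminus\psi_{\alpha_i}(R_{\alpha_i})$ using the hypothesis. Given $a_{1},\dots,a_{n+1}\in R$ with $a_{1}\cdots a_{n+1}\in L$, define $b_{k}\in R$ by $b_{k}^{(i)}=a_{k}^{(i)}$ whenever $i\notin\{\alpha_{1},\dots,\alpha_{j}\}$; in the $\alpha_{i}$-slot set $b_{1}^{(\alpha_{i})}=y$ and $b_{k}^{(\alpha_{i})}=1$ for $k\geq 2$; and in the remaining $\alpha_{l}$-slots set $b_{k}^{(\alpha_{l})}=1$. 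Then $b_{1}\cdots b_{n+1}$ has the same non-$\alpha$ coordinates as $a_{1}\cdots a_{n+1}$, so it lies in $L$; and its $\alpha_{i}$-coordinate equals $y\notin\psi_{\alpha_{i}}(R_{\alpha_{i}})$, so it lies outside $\phi(L)=\prod_{i\notin\{\alpha_{l}\}}\psi_{i}(I_{i})\times\prod_{l}\psi_{\alpha_{l}}(R_{\alpha_{l}})$. Applying (1) to $b_{1}\cdots b_{n+1}\in L\setminus\phi(L)$ yields either $b_{1}\cdots b_{n}\in L$ or $b_{1}\cdots\widehat{b_{l}}\cdots b_{n+1}\in\sqrt{L}$ for some $1\leq l\leq n$. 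Because $\sqrt{L}=\sqrt{I_{1}}\times\cdots\times R_{\alpha_{1}}\times\cdots\times R_{\alpha_{j}}\times\cdots\times\sqrt{I_{s}}$ likewise imposes no constraint on the $\alpha_{l}$-coordinates, these two conclusions pass verbatim to the matching products of the $a_{k}$, yielding the $n$-absorbing primary condition for $L$.

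The main obstacle I foresee is bookkeeping rather than substance: one must verify that a single witness in the $\alpha_{i}$-coordinate suffices to place $b_{1}\cdots b_{n+1}$ outside the product ideal $\phi(L)$ (it does, because failure in one factor is enough), and one must confirm that the conclusion delivered by (1) cannot become vacuous by landing entirely in $\alpha_{l}$-coordinates, which is ruled out precisely because the non-$\alpha$ coordinates of any subproduct of the $b_{k}$ reproduce the corresponding subproduct of the $a_{k}$ exactly. Once these coordinatewise checks are in place the three implications close up cleanly.
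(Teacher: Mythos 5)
Your proof is correct, but it reaches the conclusion by a noticeably different route than the paper. For $(1)\Rightarrow(2)$ the paper does not perturb coordinates at all: it simply notes that the hypothesis $\psi_{\alpha_i}(R_{\alpha_i})\neq R_{\alpha_i}$ forces $L\nsubseteq\sqrt{\phi(L)}$ and then invokes Theorem \ref{nil}(2) (a $\phi$-$n$-absorbing primary ideal that fails to be $n$-absorbing primary must satisfy $\sqrt{I}=\sqrt{\phi(I)}$), so the implication is a one-line corollary of earlier machinery. Your perturbation of the $\alpha$-coordinates by a witness $y\notin\psi_{\alpha_i}(R_{\alpha_i})$ is a self-contained, hands-on substitute that re-derives exactly the special case needed; it checks out because both $L$, $\phi(L)$ and $\sqrt{L}$ are determined coordinatewise and place no constraint on the $\alpha_l$-slots, which is precisely the point you flag and verify. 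For $(2)\Leftrightarrow(3)$ the paper performs the explicit insertion and deletion of $1$'s in the $\alpha_l$-coordinates over several displayed equations, whereas you observe that the $n$-absorbing primary condition is intrinsic to the quotient ring (restated via zero products and the nilradical of $R/L$) and that the coordinate projection induces $R/L\cong R'/L'$; this is cleaner and arguably the better write-up, at the small cost of having to state and justify the "depends only on the quotient" principle, which the paper never isolates. Your $(2)\Rightarrow(1)$ is the same triviality as the paper's. One cosmetic remark: condition (1) as printed says "$\phi$-$n$-absorbing ideal," but both you and the paper's own proof treat it as "$\phi$-$n$-absorbing primary ideal," which is clearly the intended reading given how the theorem is used later.
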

\begin{proof}
$(1)\Rightarrow (2)$ Since  $\psi_{\alpha_i}(R_{\alpha_i})\neq R_{\alpha_i}$ for some $\alpha_i\in\{\alpha_{1},\dots,\alpha_{j}\}$, then clearly $L\nsubseteq\sqrt{\phi(L)}$. So by Theorem \ref{nil}(2), $L$ is an $n$-absorbing ideal of $R$.\\ 
$(2)\Rightarrow (3)$ Assume that $L$ is an $n$-absorbing primary ideal of $R$ and

{\small $$
\begin{array}{ll}
(a_{1}^{(1)},\dots,a_{\alpha_{1}-1}^{(1)},&a_{\alpha_{1}+1}^{(1)},\dots,a_{\alpha_{j}-1}^{(1)},
a_{\alpha_{j}+1}^{(1)},\dots,a_{s}^{(1)})\cdots\\
&(a_{1}^{(n+1)},\dots,a_{\alpha_{1}-1}^{(n+1)},
a_{\alpha_{1}+1}^{(n+1)},\dots,a_{\alpha_{j}-1}^{(n+1)},a_{\alpha_{j}+1}^{(n+1)},\dots,a_{s}^{(n+1)})\in L',
\end{array}
$$}
in which $a_{i}^{(t)}$'s are in $R_{i}$, respectively. Then 

{\small $$
\begin{array}{ll}
(a_{1}^{(1)},\dots,a_{\alpha_{1}-1}^{(1)},&1,a_{\alpha_{1}+1}^{(1)},\dots,a_{\alpha_{j}-1}^{(1)},1,
a_{\alpha_{j}+1}^{(1)},\dots,a_{s}^{(1)})\cdots\\
&(a_{1}^{(n+1)},\dots,a_{\alpha_{1}-1}^{(n+1)},1,
a_{\alpha_{1}+1}^{(n+1)},\dots,a_{\alpha_{j}-1}^{(n+1)},1,a_{\alpha_{j}+1}^{(n+1)},\dots,a_{s}^{(n+1)})\in L.
\end{array}
$$}
So, either 
{\small $$
\begin{array}{ll}
(a_{1}^{(1)},\dots,a_{\alpha_{1}-1}^{(1)},&1,a_{\alpha_{1}+1}^{(1)},\dots,a_{\alpha_{j}-1}^{(1)},1,
a_{\alpha_{j}+1}^{(1)},\dots,a_{s}^{(1)})\cdots\\
&(a_{1}^{(n)},\dots,a_{\alpha_{1}-1}^{(n)},1,
a_{\alpha_{1}+1}^{(n)},\dots,a_{\alpha_{j}-1}^{(n)},1,a_{\alpha_{j}+1}^{(n)},\dots,a_{s}^{(n)})\in L.
\end{array}
$$}
or, there exists $1\leq i\leq n$ such that 
$$\hspace{-2cm}(a_{1}^{(1)},\dots,a_{\alpha_{1}-1}^{(1)},1,a_{\alpha_{1}+1}^{(1)},\dots,a_{\alpha_{j}-1}^{(1)},1,
a_{\alpha_{j}+1}^{(1)},\dots,a_{s}^{(1)})\cdots$$
$$\hspace{-1.5cm}(a_{1}^{(i-1)},\dots,a_{\alpha_{1}-1}^{(i-1)},1,a_{\alpha_{1}+1}^{(i-1)},\dots,a_{\alpha_{j}-1}^{(i-1)},1,
a_{\alpha_{j}+1}^{(i-1)},\dots,a_{s}^{(i-1)})$$
$$\hspace{-.6cm}(a_{1}^{(i+1)},\dots,a_{\alpha_{1}-1}^{(i+1)},1,a_{\alpha_{1}+1}^{(i+1)},\dots,a_{\alpha_{j}-1}^{(i+1)},1,
a_{\alpha_{j}+1}^{(i+1)},\dots,a_{s}^{(i+1)})\cdots$$
$$\hspace{1cm}(a_{1}^{(n+1)},\dots,a_{\alpha_{1}-1}^{(n+1)},1,
a_{\alpha_{1}+1}^{(n+1)},\dots,a_{\alpha_{j}-1}^{(n+1)},1,a_{\alpha_{j}+1}^{(n+1)},\dots,a_{s}^{(n+1)})\in\sqrt{L},$$
because $L$ is an $n$-absorbing primary ideal of $R$. Hence, either 
{\small $$
\begin{array}{ll}
(a_{1}^{(1)},\dots,a_{\alpha_{1}-1}^{(1)},&a_{\alpha_{1}+1}^{(1)},\dots,a_{\alpha_{j}-1}^{(1)},
a_{\alpha_{j}+1}^{(1)},\dots,a_{s}^{(1)})\cdots\\
&(a_{1}^{(n)},\dots,a_{\alpha_{1}-1}^{(n)},
a_{\alpha_{1}+1}^{(n)},\dots,a_{\alpha_{j}-1}^{(n)},a_{\alpha_{j}+1}^{(n)},\dots,a_{s}^{(n)})\in L'.
\end{array}
$$}
or there exists $1\leq i\leq n$ such that 
$$\hspace{-2cm}(a_{1}^{(1)},\dots,a_{\alpha_{1}-1}^{(1)},a_{\alpha_{1}+1}^{(1)},\dots,a_{\alpha_{j}-1}^{(1)},
a_{\alpha_{j}+1}^{(1)},\dots,a_{s}^{(1)})\cdots$$
$$\hspace{-1.5cm}(a_{1}^{(i-1)},\dots,a_{\alpha_{1}-1}^{(i-1)},a_{\alpha_{1}+1}^{(i-1)},\dots,a_{\alpha_{j}-1}^{(i-1)},
a_{\alpha_{j}+1}^{(i-1)},\dots,a_{s}^{(i-1)})$$
$$\hspace{-.6cm}(a_{1}^{(i+1)},\dots,a_{\alpha_{1}-1}^{(i+1)},a_{\alpha_{1}+1}^{(i+1)},\dots,a_{\alpha_{j}-1}^{(i+1)},
a_{\alpha_{j}+1}^{(i+1)},\dots,a_{s}^{(i+1)})\cdots$$
$$\hspace{1cm}(a_{1}^{(n+1)},\dots,a_{\alpha_{1}-1}^{(n+1)},
a_{\alpha_{1}+1}^{(n+1)},\dots,a_{\alpha_{j}-1}^{(n+1)},a_{\alpha_{j}+1}^{(n+1)},\dots,a_{s}^{(n+1)})\in\sqrt{L'},$$
Consequently, $L'$ is an $n$-absorbing primary ideal of $R'$.\\
$(3)\Rightarrow (1)$ Let $L'$ is an $n$-absorbing primary ideal of $R'$. It is routine to see that $L$ is an $n$-absorbing primary ideal of $R$. Consequently, $L$ is a $\phi$-$n$-absorbing primary ideal of $R$.
\end{proof}

\begin{theorem}\label{prod}
Let $R=R_{1}\times\cdots\times R_{n}$ be a  ring with
identity and let $\psi_i : {\mathfrak{I}}(R_i)\to {\mathfrak{I}}(R_i)\cup \{\emptyset\}$ be a function for $i= 1, 2,\dots,n$ such that $\psi_{n}(R_{n})\neq R_n$. Set $\phi=\psi_1\times\psi_2\times\cdots\times\psi_n$. Suppose that $I_{1}\times I_{2}\times\cdots\times I_{n}$ is an ideal of $R$ which $\psi_{1}(I_{1})\neq I_1$, and for some $2\leq j\leq n$, $\psi_{j}(I_{j})\neq I_j$, and $I_{i}$ is a proper ideal of $R_{i}$ for each $1\leq i\leq n-1$. The following conditions are equivalent:
\begin{enumerate}
\item $I_{1}\times I_{2}\times\cdots\times I_{n}$ is a  $\phi$-$n$-absorbing primary ideal of $R$;
\item $I_{n}=R_{n}$ and $I_{1}\times I_{2}\times\cdots\times I_{n-1}$ is an $n$-absorbing primary ideal of $R_{1}\times\cdots\times R_{n-1}$ or $I_{n}$ is a primary ideal of $R_{n}$ and for every $1\leq i\leq n-1$, $I_{i}$ is a primary ideal of $R_{i}$, respectively;
\item $I_{1}\times I_{2}\times\cdots\times I_{n}$ is an $n$-absorbing primary ideal of $R$.
\end{enumerate}
\end{theorem}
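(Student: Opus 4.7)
The plan is to prove the cycle $(3)\Rightarrow(1)\Rightarrow(2)\Rightarrow(3)$. The implication $(3)\Rightarrow(1)$ is immediate from the definitions. For $(2)\Rightarrow(3)$, I separate the two alternatives of (2). In the first alternative, $I_n=R_n$ and $I_1\times\cdots\times I_{n-1}$ is $n$-absorbing primary in $R':=R_1\times\cdots\times R_{n-1}$, so Theorem \ref{T4} (with $s=n$, $j=1$, $\alpha_1=n$, which is available since $\psi_n(R_n)\neq R_n$) yields that $I$ is $n$-absorbing primary in $R$. In the second alternative, each $I_k$ is a primary ideal of $R_k$, so each $J_k:=R_1\times\cdots\times I_k\times\cdots\times R_n$ is a primary ideal of $R$ with prime radical $R_1\times\cdots\times\sqrt{I_k}\times\cdots\times R_n$; since $I=\bigcap_{k=1}^n J_k$, Theorem \ref{cap} applied with $\phi=\phi_{\emptyset}$ and $n_1=\cdots=n_n=1$ shows that $I$ is $n$-absorbing primary.

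For $(1)\Rightarrow(2)$, I split on whether $I_n=R_n$. In the case $I_n=R_n$, Theorem \ref{T4} is again applicable (using $\psi_n(R_n)\neq R_n$), and its implication $(1)\Rightarrow(3)$ directly gives that $I_1\times\cdots\times I_{n-1}$ is $n$-absorbing primary in $R'$, which is the first alternative of (2). If instead $I_n\neq R_n$, then $I_n$ is proper; together with the hypothesis that each $I_i$ ($1\le i\le n-1$) is proper, every factor $I_i$ is proper, and the task reduces to showing that each $I_k$ is a primary ideal of $R_k$.

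To prove $I_1$ is primary, let $a,b\in R_1$ with $ab\in I_1$. Let $j\in\{2,\ldots,n\}$ be the index from the hypothesis with $\psi_j(I_j)\neq I_j$, fix $r_j\in I_j\setminus\psi_j(I_j)$, and for $l\in\{2,\ldots,n\}\setminus\{j\}$ pick any $s_l\in I_l$; set $s_j:=r_j$. Define $\alpha_1=(a,1,\ldots,1)$, let $\alpha_l$ be the element with $s_l$ in coordinate $l$ and $1$ elsewhere for $2\le l\le n$, and set $\alpha_{n+1}=(b,1,\ldots,1)$. Then $\alpha_1\cdots\alpha_{n+1}=(ab,s_2,\ldots,s_n)$ lies in $I\setminus\phi(I)$ (the $j$-th coordinate $r_j$ witnesses non-membership in $\phi(I)$). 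Applying the $\phi$-$n$-absorbing primary hypothesis with $\alpha_{n+1}$ distinguished, either $\alpha_1\cdots\alpha_n\in I$, which forces $a\in I_1$, or the product of $\alpha_{n+1}$ with $n-1$ of $\alpha_1,\ldots,\alpha_n$ lies in $\sqrt{I}$. Omitting any $\alpha_i$ with $i\ge 2$ places a $1$ in coordinate $i$, which would require $1\in\sqrt{I_i}$ and is impossible by properness of $I_i$; so only omitting $\alpha_1$ is viable, and it yields $b\in\sqrt{I_1}$. Hence $I_1$ is primary. For $2\le k\le n$, the symmetric construction uses $r_1\in I_1\setminus\psi_1(I_1)$ (available since $\psi_1(I_1)\neq I_1$) placed in coordinate $1$, together with $c$ in coordinate $k$ of one $\alpha_l$ and $d$ in coordinate $k$ of $\alpha_{n+1}$. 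The main obstacle is exactly this construction: both halves of the hypothesis ($\psi_1(I_1)\neq I_1$ and $\psi_j(I_j)\neq I_j$ for some $j\ge 2$) are needed, and the properness of every $I_i$ (for $i\le n-1$ from the hypothesis, and for $i=n$ from the case assumption $I_n\neq R_n$) must kill every branch of the $\phi$-$n$-absorbing primary conclusion except the one producing the primary condition on the targeted factor.
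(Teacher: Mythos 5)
Your proof is correct, and for the main direction $(1)\Rightarrow(2)$ it is essentially the paper's argument: the same case split on whether $I_n=R_n$, the same appeal to Theorem \ref{T4} when $I_n=R_n$, and, when $I_n\neq R_n$, the same device of an $(n+1)$-fold product in which a witness $r\notin\psi(\cdot)$ is planted in an auxiliary coordinate (coordinate $1$ when targeting $I_k$ with $k\geq 2$, coordinate $j$ when targeting $I_1$) so that properness of every factor eliminates all branches of the $\phi$-$n$-absorbing primary conclusion except the two that yield primality; the paper fills the remaining coordinates with $0$ where you use arbitrary $s_l\in I_l$, an immaterial difference. Where you genuinely diverge is the second alternative of $(2)\Rightarrow(3)$: the paper runs a direct element chase, choosing for each $i\geq 2$ some factor $a_i^{(j)}\in\sqrt{I_i}$ and assembling a subproduct lying in $\sqrt{I}$ (an argument with slightly delicate index bookkeeping), whereas you write $I=\bigcap_{k=1}^{n}J_k$ with $J_k=R_1\times\cdots\times I_k\times\cdots\times R_n$ primary with prime radical and invoke Theorem \ref{cap} with $\phi=\phi_{\emptyset}$ and $n_1=\cdots=n_n=1$ (equivalently Corollary \ref{productprime}). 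Your route is cleaner and less error-prone, at the cost of importing the earlier intersection theorem; both are valid, and the rest of your cycle $(3)\Rightarrow(1)\Rightarrow(2)\Rightarrow(3)$ matches the paper's logic.
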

\begin{proof}
(1)$\Rightarrow$(2) Suppose that $I_{1}\times I_{2}\times\cdots\times I_{n}$ is a  $\phi$-$n$-absorbing primary ideal of $R$. First assume that $I_{n}=R_{n}$. Since $\psi_{n}(R_{n})\neq R_n$, then $I_{1}\times I_{2}\times\cdots\times I_{n-1}$ is an $n$-absorbing primary ideal of $R_{1}\times\cdots\times R_{n-1}$, by Theorem \ref{T4}. Now, suppose that $I_{n}\neq R_{n}$. Fix $2\leq i\leq n$. We show that $I_{i}$ is a primary ideal of $R_i$. Suppose that $ab\in I_{i}$ for some $a,b\in R_{i}$. Let $x\in I_1\backslash\psi_1(I_1)$. Then
$$\hspace{-3cm}(x,1,\dots,1)(1,0,1,\dots,1,\dots,1)(1,1,0,1,\dots,1,\dots,1)\cdots$$
$$\hspace{-1.5cm}(1,\dots,1,0,\overbrace{1}^{i-th},\dots,1)(1,\dots,\overbrace{1}^{i-th},0,1,\dots,1)\cdots(1,\dots,1,0)$$
$$\hspace{-3cm}(1,\dots,1,\overbrace{a}^{i-th},1,\dots,1)
(1,\dots,1,\overbrace{b}^{i-th},1,\dots,1)$$
$$\hspace{1cm}=(x,0,\dots,0,\overbrace{ab}^{i-th},0,\dots,0)
\in I_{1}\times\dots\times I_{n}\backslash\psi_1(I_1)\times\cdots\times\psi_n(I_n),$$
Since $I_{1}\times I_{2}\times\cdots\times I_{n}$ is  $\phi$-$n$-absorbing primary and $I_{i}$'s are proper, then either
$$\hspace{-2.7cm}(x,1,\dots,1)(1,0,1,\dots,1,\dots,1)(1,1,0,1,\dots,1,\dots,1)\cdots$$
$$\hspace{-1.5cm}(1,\dots,1,0,\overbrace{1}^{i-th},\dots,1)(1,\dots,\overbrace{1}^{i-th},0,1,\dots,1)\cdots(1,\dots,1,0)$$
$$(1,\dots,1,\overbrace{a}^{i-th},1,\dots,1)=(x,0,\dots,0,\overbrace{a}^{i-th},0,\dots,0)\in I_{1}\times\dots\times I_{n},$$
or
$$\hspace{-2.6cm}(x,1,\dots,1)(1,0,1,\dots,1,\dots,1)(1,1,0,1,\dots,1,\dots,1)\cdots$$
$$\hspace{-1.5cm}(1,\dots,1,0,\overbrace{1}^{i-th},\dots,1)(1,\dots,\overbrace{1}^{i-th},0,1,\dots,1)\cdots(1,\dots,1,0)$$
$$\hspace{.7cm}(1,\dots,1,\overbrace{b}^{i-th},1,\dots,1)=(x,0,\dots,0,\overbrace{b}^{i-th},0,\dots,0)\in\sqrt{I_{1}\times\dots\times I_{n}},$$
and thus either $a\in I_i$ or $b\in\sqrt{I_i}$. Consequently $I_i$ is a primary ideal of $R_i$. Since for some $2\leq j\leq n$, $\psi_j(I_j)\neq I_j$, similarly we can show that $I_{1}$ is a primary ideal of $R_{1}$.\\
(2)$\Rightarrow$(3) If $I_{n}=R_{n}$ and $I_{1}\times I_{2}\times\cdots\times I_{n-1}$ is an $n$-absorbing primary ideal of $R_{1}\times\cdots\times R_{n-1}$, then $I_{1}\times I_{2}\times\cdots\times I_{n}$ is an $n$-absorbing primary ideal of $R$, by Theorem \ref{T4}. Now, 
assume that $I_{n}$ is a primary ideal of $R_{n}$ and for each $1\leq i\leq n-1$, $I_{i}$ is a primary ideal of $R_{i}$. Suppose that 
\[(a_{1}^{(1)},\dots,a_{n}^{(1)})(a_{1}^{(2)},\dots,a_{n}^{(2)})\cdots(a_{1}^{(n+1)},\dots,a_{n}^{(n+1)})\]
\[\in I_{1}\times I_{2}\times\cdots\times I_{n}\backslash \psi_1(I_1)\times\cdots\times\psi_n(I_n),\]  
in which for every $1\leq j\leq n+1$, $a_{i}^{(j)}$'s are in $R_{i}$, respectively. Suppose that 
\[(a_{1}^{(1)},\dots,a_{n}^{(1)})(a_{1}^{(2)},\dots,a_{n}^{(2)})\cdots(a_{1}^{(n)},\dots,a_{n}^{(n)})\notin I_{1}\times I_{2}\times\cdots\times I_{n}.\]  
Without loss of generality we may assume that $a_{1}^{(1)}\cdots a_{n}^{(n)}\notin I_1$. Since $I_1$ is primary, we deduce that 
$a_{1}^{(n+1)}\in\sqrt{I_1}$.
On the other hand $\sqrt{I_i}$ is a prime ideal, for any $2\leq i\leq n$, then at least one of the $a_{i}^{(j)}$'s is in $\sqrt{I_{i}}$, say $a_{i}^{(i)}\in\sqrt{I_i}$. Thus $(a_{1}^{(2)},\dots,a_{n}^{(2)})\cdots(a_{1}^{(n+1)},\dots,a_{n}^{(n+1)})\in\sqrt{I_{1}\times I_{2}\times\cdots\times I_{n}}$.
Consequently $I_{1}\times I_{2}\times\cdots\times I_{n}$ is an $n$-absorbing primary ideal of $R$. \\
(3)$\Rightarrow$(1) is obvious.
\end{proof}

\begin{theorem}\label{prod2}
Let $R=R_{1}\times\cdots\times R_{n}$ be a  ring with
identity and let $\psi_i : {\mathfrak{I}}(R_i)\to {\mathfrak{I}}(R_i)\cup \{\emptyset\}$ be a function for $i= 1, 2,\dots,n$ such that $\psi_{n}(R_{n})\neq R_n$. Set $\phi=\psi_1\times\psi_2\times\cdots\times\psi_n$, and suppose that for every $1\leq i\leq n-1$, $I_{i}$ is a proper ideal of $R_{i}$ such that $\psi_1(I_1)\neq I_{1}$ and $I_{n}$ is an ideal of $R_{n}$. Consider the following conditions:
\begin{enumerate}
\item $I_{1}\times\dots\times I_{n}$ is a  $\phi$-$n$-absorbing primary ideal of $R$ that is not an $n$-absorbing primary ideal of $R$.
\item $I_{1}$ is a  $\psi_1$-primary ideal of $R_{1}$ that is not a primary ideal and for every $2\leq i\leq n$, $I_{i}=\psi_i(I_i)$ is a primary ideal of $R_{i}$, respectively.
\end{enumerate}
Then $(1)\Rightarrow(2)$.
\end{theorem}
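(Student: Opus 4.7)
The plan is to combine the componentwise consequences of Theorem~\ref{nil} with the block-product constructions used in the proof of Theorem~\ref{prod}~(1)$\Rightarrow$(2), organized in four stages. Set $I=I_{1}\times\cdots\times I_{n}$, so $\phi(I)=\psi_{1}(I_{1})\times\cdots\times\psi_{n}(I_{n})$. Since $I$ is $\phi$-$n$-absorbing primary but not $n$-absorbing primary, Theorem~\ref{nil}(1) gives $I^{n+1}\subseteq\phi(I)$, which componentwise reads $I_{j}^{n+1}\subseteq\psi_{j}(I_{j})$ for every $j$. Taking $j=n$: if $I_{n}=R_{n}$ then $R_{n}\subseteq\psi_{n}(R_{n})$, contradicting $\psi_{n}(R_{n})\neq R_{n}$. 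So $I_{n}$ is proper and, together with the standing hypothesis on $I_{1},\dots,I_{n-1}$, every $I_{j}$ is proper; in particular $1\notin\sqrt{I_{j}}$ for all~$j$.

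Next, I will prove each $I_{i}$ with $2\leq i\leq n$ is primary. Fix $i$, suppose $ab\in I_{i}$ with $a\notin I_{i}$, and choose $x\in I_{1}\setminus\psi_{1}(I_{1})$ (which exists since $\psi_{1}(I_{1})\subsetneq I_{1}$). Form the $n+1$ factors
\[
(x,1,\dots,1),\ (1,\dots,1,a,1,\dots,1),\ (1,\dots,1,b,1,\dots,1)
\]
(with $a,b$ at position $i$) together with zero-factors $(1,\dots,1,0,1,\dots,1)$ at each position $j\in\{2,\dots,n\}\setminus\{i\}$. Their product $(x,0,\dots,0,ab,0,\dots,0)$ lies in $I\setminus\phi(I)$ because $x\notin\psi_{1}(I_{1})$. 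Labeling the $b$-factor as $a_{n+1}$, the product of the remaining $n$ factors equals $(x,0,\dots,0,a,0,\dots,0)\notin I$. The $\phi$-$n$-absorbing primary property then forces the product of $a_{n+1}$ with some $n-1$ of the other factors to lie in $\sqrt{I}=\sqrt{I_{1}}\times\cdots\times\sqrt{I_{n}}$. Dropping the $x$-factor or any zero-factor leaves a $1$ at a position where $\sqrt{I_{j}}$ is proper, so the only viable case is to drop the $a$-factor, which yields $(x,0,\dots,0,b,0,\dots,0)\in\sqrt{I}$ and hence $b\in\sqrt{I_{i}}$. The analogous construction at position~$1$, using only the two factors $(a,1,\dots,1),(b,1,\dots,1)$ together with the $n-1$ zero-factors at positions $2,\dots,n$, shows that $I_{1}$ is $\psi_{1}$-primary.

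At this point $I_{1}$ cannot be primary, since otherwise every $I_{j}$ would be primary and Theorem~\ref{prod} would make $I$ an $n$-absorbing primary ideal, contrary to assumption. This non-primality furnishes the witness needed for the last step. To prove $\psi_{i}(I_{i})=I_{i}$ for $2\leq i\leq n$, argue by contradiction: suppose $y\in I_{i}\setminus\psi_{i}(I_{i})$, and pick $a,b\in R_{1}$ with $ab\in I_{1}$, $a\notin I_{1}$, $b\notin\sqrt{I_{1}}$; $\psi_{1}$-primality of $I_{1}$ forces $ab\in\psi_{1}(I_{1})$. Form the $n+1$ factors $(a,1,\dots,1),(b,1,\dots,1),(1,\dots,1,y,1,\dots,1)$ (with $y$ at position $i$) and zero-factors at positions $\{2,\dots,n\}\setminus\{i\}$. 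Their product $(ab,0,\dots,0,y,0,\dots,0)$ lies in $I\setminus\phi(I)$ because $y\notin\psi_{i}(I_{i})$. Labeling the $b$-factor as $a_{n+1}$, the product of the other $n$ factors is $(a,0,\dots,0,y,0,\dots,0)\notin I$ (since $a\notin I_{1}$), and every product of $a_{n+1}$ with $n-1$ of the other factors has either a $1$ at a position where $\sqrt{I_{j}}$ is proper (if we drop the $y$-factor or a zero-factor) or a $b$ at position~$1$ outside $\sqrt{I_{1}}$ (if we drop the $a$-factor). None of these products lies in $\sqrt{I}$, contradicting $\phi$-$n$-absorbing primality of $I$. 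Therefore $\psi_{i}(I_{i})=I_{i}$.

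The main obstacle is the final case analysis: every possible omitted factor must genuinely fail to place the resulting $n$-fold product in $\sqrt{I}$, and this is ensured by the witness $b$ lying \emph{outside} $\sqrt{I_{1}}$ rather than merely outside $I_{1}$. Such a stronger witness is available only after non-primality of $I_{1}$ has been established, which is why the four stages --- properness of each $I_{j}$, primality of $I_{i}$ for $i\geq 2$, $\psi_{1}$-primality and non-primality of $I_{1}$, and finally $\psi_{i}(I_{i})=I_{i}$ --- must be carried out in this order.
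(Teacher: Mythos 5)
Your proof is correct, and it reorganizes the argument in a way that differs from the paper at two points. The paper opens by observing that if $\psi_i(I_i)\neq I_i$ for some $2\leq i\leq n$, then all hypotheses of Theorem~\ref{prod} are met, so condition~(1) would force $I_1\times\cdots\times I_n$ to be $n$-absorbing primary --- a contradiction; this yields $\psi_i(I_i)=I_i$ for all $i\geq 2$ (and hence $I_n\neq R_n$) in one stroke, before any element-level work. You instead get properness of $I_n$ from Theorem~\ref{nil}(1) read componentwise ($I_n^{n+1}\subseteq\psi_n(I_n)$ rules out $I_n=R_n$), and you defer $\psi_i(I_i)=I_i$ to the very end, proving it by an explicit $(n+1)$-factor configuration built from a witness $y\in I_i\setminus\psi_i(I_i)$ together with $a,b\in R_1$ satisfying $ab\in\psi_1(I_1)$, $a\notin I_1$, $b\notin\sqrt{I_1}$. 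Your observation that the stronger witness $b\notin\sqrt{I_1}$ (available only once non-primality of $I_1$ is known) is what kills every possible omitted factor is exactly the point that makes this direct route work, and it is a nice self-contained alternative to leaning on Theorem~\ref{prod}. The middle steps --- primality of each $I_i$ for $i\geq 2$ via the factor $(x,1,\dots,1)$ with $x\in I_1\setminus\psi_1(I_1)$, and $\psi_1$-primality of $I_1$ via the zero-factors --- coincide with the paper's constructions. One small caveat: in your stage~4 you invoke Theorem~\ref{prod} to conclude that a componentwise-primary product is $n$-absorbing primary, but at that point you do not yet know whether the hypothesis ``$\psi_j(I_j)\neq I_j$ for some $2\leq j\leq n$'' of Theorem~\ref{prod} holds; what you actually need is the $\psi$-free implication established inside the proof of Theorem~\ref{prod}~(2)$\Rightarrow$(3) (the paper itself papers over this with ``it is easy to see''), so the step is sound but the citation should be to that argument rather than to the theorem's statement.
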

\begin{proof}
(1)$\Rightarrow$(2) Assume that $I_{1}\times\dots\times I_{n}$ is a  $\phi$-$n$-absorbing primary ideal of $R$ that is not an $n$-absorbing primary ideal. If for some $2\leq i\leq n$ we have $\psi_i(I_i)\neq I_i$, then $I_{1}\times\dots\times I_{n}$ is an $n$-absorbing primary ideal of $R$ by Theorem \ref{prod}, which contradicts our assumption. Thus for every $2\leq i\leq n$, $\psi_i(I_i)=I_i$ and so $I_n\neq R_n$. A proof similar to part (1)$\Rightarrow$(2) of Theorem \ref{prod} shows that for every $2\leq i\leq n$, $\psi_i(I_i)=I_i$ is a primary ideal of $R_i$. Now, we show that $I_1$ is a  $\psi_1$-primary ideal of $R_1$. Consider $a,b\in R_1$ such that $ab\in I_1\backslash\psi_1(I_1)$. Note that 
\[(1,0,1,\dots,1)(1,1,0,1,\dots,1)\cdots(1,\dots,1,0)(a,1,\dots,1)(b,1,\dots,1)\]
\[\hspace{1.3cm}{=(ab,0,\dots,0)\in (I_{1}\times I_2\times\cdots\times I_n)\backslash(\psi_1(I_1)\times\cdots\times\psi_n(I_n)).}\]
Because $I_i$'s are proper, the product of \ $(a,1,\dots,1)(b,1,\dots,1)$ with $n-2$ of $(1,0,1,\dots,1), (1,1,0,1,\dots,1),\dots,(1,\dots,1,0)$ is not in $\sqrt{I_{1}\times I_2\times\cdots\times I_n}$.
Since $I_{1}\times I_2\times\cdots\times I_n$ is a $\phi$-$n$-absorbing primary ideal of $R$, we have either 
\[(1,0,1,\dots,1)(1,1,0,1,\dots,1)\cdots(1,\dots,1,0)(a,1,\dots,1)\]
\[=(a,0,\dots,0)\in I_{1}\times I_2\times\cdots\times I_n.\] 
or 
\[(1,0,1,\dots,1)(1,1,0,1,\dots,1)\cdots(1,\dots,1,0)(b,1,\dots,1)\]
\[=(b,0,\dots,0)\in\sqrt{I_{1}\times I_2\times\cdots\times I_n}.\] 
So either $a\in I_1$ or $b\in\sqrt{I_1}$. Thus $I_1$ is a $\psi_1$-primary ideal of $R_1$. Assume $I_1$ is a primary ideal of $R_1$, since for every $2\leq i\leq n$, $I_i$ is a 
primary ideal of $R_i$, it is easy to see that $I_{1}\times\dots\times I_{n}$ is an
 $n$-absorbing primary ideal of $R$, which is a contradiction.
 \end{proof}

\begin{theorem}
Let $R=R_{1}\times R_2\times R_{3}$ be a  ring with
identity and let $\psi_i : {\mathfrak{I}}(R_i)\to {\mathfrak{I}}(R_i)\cup \{\emptyset\}$ be a function for $i= 1, 2,3$ such that $\psi_{3}(R_{3})\neq R_3$. Set $\phi=\psi_1\times\psi_2\times\psi_3$, and suppose that for $i=1,2$, $I_{i}$ is a proper ideal of $R_{i}$ such that $\psi_1(I_1)\neq I_{1}$ and $I_{3}$ is an ideal of $R_{3}$. The following conditions are equivalent:
\begin{enumerate}
\item $I_{1}\times I_2\times I_{3}$ is a  $\phi$-$3$-absorbing primary ideal of $R$ that is not an $3$-absorbing primary ideal of $R$.
\item $I_{1}$ is a  $\psi_1$-primary ideal of $R_{1}$ that is not a primary ideal and for $ i=2,3$, $I_{i}=\psi_i(I_i)$ is a primary ideal of $R_{i}$, respectively.
\end{enumerate}
\end{theorem}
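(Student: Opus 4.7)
The implication $(1)\Rightarrow(2)$ is immediate from Theorem \ref{prod2} specialized to $n=3$; its hypotheses are satisfied by those of the current theorem.

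For the converse $(2)\Rightarrow(1)$, I would split the task into (a) showing that $I:=I_1\times I_2\times I_3$ is \emph{not} a $3$-absorbing primary ideal, and (b) showing that it \emph{is} a $\phi$-$3$-absorbing primary ideal. I would handle (a) first as it is concrete. Since $I_1$ is not primary, fix $a,b\in R_1$ with $ab\in I_1$, $a\notin I_1$, and $b\notin\sqrt{I_1}$, and take
$$v_1=(1,0,1),\quad v_2=(a,1,1),\quad v_3=(1,1,0),\quad v_4=(b,1,1).$$
Then $v_1v_2v_3v_4=(ab,0,0)\in I$, whereas $v_1v_2v_3=(a,0,0)\notin I$ (since $a\notin I_1$). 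The three products $v_2v_3v_4=(ab,1,0)$, $v_1v_3v_4=(b,0,0)$, $v_1v_2v_4=(ab,0,1)$ each miss $\sqrt{I}=\sqrt{I_1}\times\sqrt{I_2}\times\sqrt{I_3}$, because $1\notin\sqrt{I_2}$ and $1\notin\sqrt{I_3}$ (the two primary ideals being proper, their radicals are proper primes) and $b\notin\sqrt{I_1}$. This shows the $3$-absorbing primary condition fails.

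For (b), the key observation is that $\phi(I)=\psi_1(I_1)\times I_2\times I_3$ since $\psi_i(I_i)=I_i$ for $i=2,3$; consequently an element lies in $I\setminus\phi(I)$ exactly when its first coordinate lies in $I_1\setminus\psi_1(I_1)$ and its other coordinates in $I_2,I_3$. Assume $v_1v_2v_3v_4\in I\setminus\phi(I)$. If $v_1v_2v_3\in I$ the first alternative holds, so suppose not and pick a coordinate $j_0\in\{1,2,3\}$ with $(v_1v_2v_3)^{(j_0)}\notin I_{j_0}$. The factorization $p_{j_0}=(v_1v_2v_3)^{(j_0)}\cdot v_4^{(j_0)}$ lies in $I_{j_0}\setminus\psi_{j_0}(I_{j_0})$, so applying the $\psi_1$-primary condition (for $j_0=1$) or the primary condition (for $j_0\in\{2,3\}$) forces $v_4^{(j_0)}\in\sqrt{I_{j_0}}$. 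Then for every $d\in\{1,2,3\}$ the product $q_d:=v_1v_2v_3v_4/v_d$ retains $v_4$ as a factor, so $q_d^{(j_0)}\in\sqrt{I_{j_0}}$.

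It remains to choose $d\in\{1,2,3\}$ so that $q_d^{(j)}\in\sqrt{I_j}$ holds for the other two coordinates as well. Setting $E_j:=\{d\in\{1,2,3\}:q_d^{(j)}\in\sqrt{I_j}\}$, I would show $|E_j|\geq 2$ for every $j$; since $E_{j_0}=\{1,2,3\}$ and two subsets of size $\geq 2$ in a three-element universe always meet, this yields $\bigcap_j E_j\neq\emptyset$, and any such $d$ realizes the second alternative of $\phi$-$3$-absorbing primary. For $j\in\{2,3\}$ with $j\neq j_0$, primeness of $\sqrt{I_j}$ applied to $(v_1v_2v_3)^{(j)}\in I_j\subseteq\sqrt{I_j}$ places some $v_k^{(j)}$ in $\sqrt{I_j}$ for $k\in\{1,2,3\}$, putting $\{1,2,3\}\setminus\{k\}$ into $E_j$. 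The main obstacle will be $j=1$ in the case $j_0\neq 1$, since $\sqrt{I_1}$ need not be prime: here one observes $(v_1v_2v_3)^{(1)}\in I_1\setminus\psi_1(I_1)$ (otherwise $p_1$ would also lie in $\psi_1(I_1)$), and applying the $\psi_1$-primary condition to the three two-by-one groupings $(v_a^{(1)}v_b^{(1)})\cdot v_c^{(1)}$ yields, for each $c\in\{1,2,3\}$, either $v_a^{(1)}v_b^{(1)}\in I_1$ or $v_c^{(1)}\in\sqrt{I_1}$; a short case split on $S_1:=\{c\in\{1,2,3\}:v_c^{(1)}\in\sqrt{I_1}\}$ together with the disjunction $v_4^{(1)}\in\sqrt{I_1}$ or not then confirms $|E_1|\geq 2$.
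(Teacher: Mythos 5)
Your overall strategy is sound and is essentially the paper's own: $(1)\Rightarrow(2)$ is indeed just Theorem \ref{prod2} with $n=3$, your witness quadruple $(1,0,1),(a,1,1),(1,1,0),(b,1,1)$ for the failure of $3$-absorbing primariness is the same one the paper uses, and your proof of the $\phi$-$3$-absorbing primary property uses the same ingredients (primariness of $I_2,I_3$, primeness of their radicals, $\psi_1$-primariness of $I_1$); your packaging via the sets $E_j$ and the observation that three subsets of $\{1,2,3\}$ of sizes $3,\geq 2,\geq 2$ must meet is a cleaner organization than the paper's explicit three-case enumeration.

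One point needs repair, though it does not threaten the argument. Twice you assert that $(v_1v_2v_3)^{(j)}$ lies in $I_j$ (for $j\in\{2,3\}$, $j\neq j_0$, as the input to primeness of $\sqrt{I_j}$; and for $j=1$ when $j_0\neq 1$, as the input to the two-by-one groupings). Neither membership is automatic: the choice of $j_0$ only guarantees that \emph{some} coordinate of $v_1v_2v_3$ escapes the corresponding $I_j$, not that the others land inside, and your parenthetical ``otherwise $p_1$ would also lie in $\psi_1(I_1)$'' only rules out membership in $\psi_1(I_1)$, not non-membership in $I_1$. The fix is to run each coordinate through the full four-fold product first: for $j\in\{2,3\}$ apply primeness of $\sqrt{I_j}$ to $v_1^{(j)}v_2^{(j)}v_3^{(j)}v_4^{(j)}\in I_j\subseteq\sqrt{I_j}$, which puts some $v_k^{(j)}$ ($k\in\{1,2,3,4\}$) in $\sqrt{I_j}$ and gives $|E_j|\geq 2$ whether $k=4$ or not; for $j=1$ apply $\psi_1$-primariness to $p_1=(v_1v_2v_3)^{(1)}\cdot v_4^{(1)}\in I_1\setminus\psi_1(I_1)$ to get the dichotomy ``$v_4^{(1)}\in\sqrt{I_1}$ (so $E_1=\{1,2,3\}$) or $(v_1v_2v_3)^{(1)}\in I_1$,'' and only in the second branch invoke the groupings. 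You gesture at exactly this dichotomy in your last sentence, but it must come \emph{before} the claim $(v_1v_2v_3)^{(1)}\in I_1\setminus\psi_1(I_1)$, since it is what justifies it. With that reordering the proof is complete. (The paper's own Case 2 silently assumes $a_1^{(1)}a_1^{(2)}a_1^{(3)}\in I_1$, implicitly because Case 1 has already disposed of the contrary situation; your version, once fixed as above, is actually the more carefully quantified of the two.)
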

\begin{proof}
(1)$\Rightarrow$(2) See Theorem \ref{prod2}.\\
(2)$\Rightarrow$(1) Let 
\[(a_1^{(1)},a_2^{(1)},a_3^{(1)})(a_1^{(2)},a_2^{(2)},a_3^{(2)})(a_1^{(3)},a_2^{(3)},a_3^{(3)})(a_1^{(4)},a_2^{(4)},a_3^{(4)})\]
\[\in I_1\times\psi_2(I_2)\times\psi_3(I_3)\backslash\psi_1(I_1)\times\psi_2(I_2)\times\psi_3(I_3),\]
in which, for every $1\leq i\leq3$, $a_i^{(1)},a_i^{(2)},a_i^{(3)},a_i^{(4)}\in R_i$, respectively. Assume that 
\[(a_1^{(1)},a_2^{(1)},a_3^{(1)})(a_1^{(2)},a_2^{(2)},a_3^{(2)})(a_1^{(3)},a_2^{(3)},a_3^{(3)})\notin I_1\times\psi_2(I_2)\times\psi_3(I_3).\]
Therefore, we may have three cases; $a_1^{(1)}a_1^{(2)}a_1^{(3)}\notin I_1$ or $a_2^{(1)}a_2^{(2)}a_2^{(3)}\notin I_2$ or
$a_3^{(1)}a_3^{(2)}a_3^{(3)}\notin I_3$.\\
{\bf Case 1.} Assume that $a_1^{(1)}a_1^{(2)}a_1^{(3)}\notin I_1$. Since $a_1^{(1)}a_1^{(2)}a_1^{(3)}a_1^{(4)}\in I_1\backslash\psi_1(I_1)$ and $I_1$ is $\psi_1$-primary, then $a_1^{(4)}\in\sqrt{I_1}$. On the other hand $\sqrt{I_2}$ and $\sqrt{I_3}$
are prime and $a_2^{(1)}a_2^{(2)}a_2^{(3)}a_2^{(4)}\in\sqrt{I_2}$ and 
$a_3^{(1)}a_3^{(2)}a_3^{(3)}a_3^{(4)}\in\sqrt{I_3}$. Without loss of generality we may assume that $a_2^{(2)}\in\sqrt{I_2}$ and $a_3^{(3)}\in\sqrt{I_3}$. Hence 
\[(a_1^{(2)},a_2^{(2)},a_3^{(2)})(a_1^{(3)},a_2^{(3)},a_3^{(3)})(a_1^{(4)},a_2^{(4)},a_3^{(4)})\in \sqrt{I_1\times\psi_2(I_2)\times\psi_3(I_3)}.\]
{\bf Case 2.} Assume that $a_2^{(1)}a_2^{(2)}a_2^{(3)}\notin I_2$. Since $I_2$ is primary and $a_2^{(1)}a_2^{(2)}a_2^{(3)}a_2^{(4)}\in I_2$, then $a_2^{(4)}\in\sqrt{I_2}$. Since $\sqrt{I_3}$ is prime and
$a_3^{(1)}a_3^{(2)}a_3^{(3)}a_3^{(4)}\in\sqrt{I_3}$, then at least one of the $a_3^{(j)}$'s is in $\sqrt{I_3}$. Let  
$a_3^{(1)}\in\sqrt{I_3}$. Since $I_1$ is $\psi_1$-primary and $a_1^{(1)}a_1^{(2)}a_1^{(3)}\in I_1\backslash\psi_1(I_1)$,
then either $a_1^{(3)}\in I_1$ or $a_1^{(1)}a_1^{(2)}\in\sqrt{I_1}$. So
\[(a_1^{(1)},a_2^{(1)},a_3^{(1)})(a_1^{(3)},a_2^{(3)},a_3^{(3)})(a_1^{(4)},a_2^{(4)},a_3^{(4)})\in \sqrt{I_1\times\psi_2(I_2)\times\psi_3(I_3)}\]
or 
\[(a_1^{(1)},a_2^{(1)},a_3^{(1)})(a_1^{(2)},a_2^{(2)},a_3^{(2)})(a_1^{(4)},a_2^{(4)},a_3^{(4)})\in \sqrt{I_1\times\psi_2(I_2)\times\psi_3(I_3)}.\]
Let $a_3^{(2)}\in\sqrt{I_3}$. Since $I_1$ is $\psi_1$-primary, we have either $a_1^{(1)}\in I_1$ or $a_1^{(2)}a_1^{(3)}\in\sqrt{I_1}$.
Hence 
\[(a_1^{(1)},a_2^{(1)},a_3^{(1)})(a_1^{(2)},a_2^{(2)},a_3^{(2)})(a_1^{(4)},a_2^{(4)},a_3^{(4)})\in \sqrt{I_1\times\psi_2(I_2)\times\psi_3(I_3)}\]
or 
\[(a_1^{(2)},a_2^{(2)},a_3^{(2)})(a_1^{(3)},a_2^{(3)},a_3^{(3)})(a_1^{(4)},a_2^{(4)},a_3^{(4)})\in \sqrt{I_1\times\psi_2(I_2)\times\psi_3(I_3)}.\]
With a similar manner we can investigate the other remaining situations.\\
{\bf Case 3.} For $a_3^{(1)}a_3^{(2)}a_3^{(3)}\notin I_3$ we act like case 2.
Consequently $I_{1}\times I_2\times I_{3}$ is a  $\phi$-$3$-absorbing primary ideal of $R$. Since $I_1$ is not a primary ideal of $R_{1}$, there exist elements $a,b\in R_{1}$ such that $ab\in\psi_1(I_1)$,
but $a\notin I_1$ and $b\notin\sqrt{I_1}$. Hence $$(1,0,1)(1,1,0)(a,1,1)(b,1,1)=(ab,0,0)\in\psi_1(I_1)\times\psi_2(I_2)\times\psi_3(I_3),$$ but neither $$(1,0,1)(1,1,0)(a,1,1)=(a,0,0)\in I_1\times I_2\times I_3,$$ nor $$(1,0,1)(1,1,0)(b,1,1)=(b,0,0)\in\sqrt{I_1\times I_2\times I_3},$$ also the product of $(a,1,1)(b,1,1)$ with any one of elements $(1,0,1),(1,1,0)$ is not in $\sqrt{I_{1}\times I_2 \times I_3}$. Consequently $I_{1}\times I_2 \times I_3$ is not a $3$-absorbing primary ideal of $R$.

\end{proof}

\begin{theorem}\label{nonzero}
Let $R=R_{1}\times\cdots\times R_{n+1}$ where $R_{i}$'s are 
rings with identity and let  for $i= 1, 2,\dots,n+1$, $\psi_i : {\mathfrak{I}}(R_i)\to {\mathfrak{I}}(R_i)\cup \{\emptyset\}$ be a function such that $\psi_{i}(R_{i})\neq R_i$. Set $\phi=\psi_1\times\psi_2\times\cdots\times\psi_{n+1}$.
\begin{enumerate}
\item For every ideal $I$ of $R$, $\phi(I)$ is not an $n$-absorbing primary ideal of $R$;
\item If $I$ is a $\phi$-$n$-absorbing primary ideal of $R$, then either $I=\phi(I)$, or $I$ is an $n$-absorbing primary ideal of $R$.
\end{enumerate}
\end{theorem}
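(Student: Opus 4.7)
The plan is to exploit the product decomposition of ideals in $R = R_{1}\times\cdots\times R_{n+1}$: every ideal $I$ of $R$ has the form $I = I_{1}\times\cdots\times I_{n+1}$, and correspondingly $\phi(I) = \psi_{1}(I_{1})\times\cdots\times\psi_{n+1}(I_{n+1})$. The standing convention $\psi_{i}(J)\subseteq J$ combined with the hypothesis $\psi_{i}(R_{i})\neq R_{i}$ forces every $\psi_{i}(I_{i})$ to be a proper ideal of $R_{i}$, regardless of whether $I_{i}$ itself is proper. The central elements throughout will be the orthogonal idempotents $e_{i}$ with a single $1$ in position $i$, and their complements $\epsilon_{i}=1-e_{i}$.

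For part (1), I would test $\phi(I)$ against the explicit tuple $(\epsilon_{1},\ldots,\epsilon_{n+1})$. A direct coordinate-wise calculation gives $\epsilon_{1}\epsilon_{2}\cdots\epsilon_{n+1}=0\in\phi(I)$, while $\epsilon_{1}\cdots\epsilon_{n}=e_{n+1}$ escapes $\phi(I)$ because $1\notin\psi_{n+1}(I_{n+1})$, and for each $\ell\in\{1,\ldots,n\}$ the product $\epsilon_{n+1}\prod_{k\le n,\,k\neq\ell}\epsilon_{k}=e_{\ell}$ avoids $\sqrt{\phi(I)}$ since $1\notin\sqrt{\psi_{\ell}(I_{\ell})}$. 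Both clauses of the $n$-absorbing primary disjunction therefore fail for $\phi(I)$.

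For part (2), assume $I$ is $\phi$-$n$-absorbing primary and split on whether some $I_{k}=R_{k}$. In the affirmative case, $\sqrt{I}$ has $k$-th coordinate $R_{k}$, whereas $\sqrt{\phi(I)}$ has $k$-th coordinate $\sqrt{\psi_{k}(R_{k})}\subsetneq R_{k}$; hence $\sqrt{I}\neq\sqrt{\phi(I)}$, so Theorem~\ref{nil}(2) together with the $\phi$-$n$-absorbing primary hypothesis forces $I$ to be $n$-absorbing primary. Otherwise every $I_{i}$ is proper, and I aim to show $I=\phi(I)$ by contradiction. Assuming $\psi_{1}(I_{1})\subsetneq I_{1}$ after relabeling, pick $z\in I_{1}\setminus\psi_{1}(I_{1})$ and consider $a_{1}=(z,1,1,\ldots,1)$ together with $a_{i}=\epsilon_{i}$ for $i=2,\ldots,n+1$. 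Component-wise computation yields $a_{1}a_{2}\cdots a_{n+1}=(z,0,\ldots,0)\in I\setminus\phi(I)$, whereas $a_{1}\cdots a_{n}=(z,0,\ldots,0,1)\notin I$ (since $I_{n+1}$ is proper), and each product $a_{n+1}\prod_{k\le n,\,k\neq\ell}a_{k}$ retains a $1$ in position $\ell$ (for $\ell\ge 2$) or in position $1$ (for $\ell=1$), keeping it outside $\sqrt{I}$. This contradicts the $\phi$-$n$-absorbing primary property and yields $I=\phi(I)$.

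The main obstacle is the tuple construction in the second case. The element $a_{1}=(z,1,\ldots,1)$ has to place $z$ in the ``bad'' coordinate while filling the rest with $1$s so that multiplying by $\epsilon_{2},\ldots,\epsilon_{n+1}$ collapses the full product to $ze_{1}\in I\setminus\phi(I)$, yet simultaneously every partial product of $n$ of these factors keeps a $1$ in some coordinate whose $I_{\ell}$ is proper, ruling out both $a_{1}\cdots a_{n}\in I$ and the radical condition on products involving $a_{n+1}$. It is essential here that every $I_{i}$ be proper, which is precisely why the case split in part (2) is unavoidable.
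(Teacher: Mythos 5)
Your argument is correct and follows essentially the same route as the paper's: part (1) is the paper's proof verbatim (the test elements $(0,1,\dots,1),\dots,(1,\dots,1,0)$ are exactly your $\epsilon_1,\dots,\epsilon_{n+1}$), and part (2) hinges, as in the paper, on Theorem~\ref{nil}(2) applied once one knows $\sqrt{I}\neq\sqrt{\phi(I)}$. The only (harmless) difference is organizational: the paper extracts ``some $I_i=R_i$'' from $I\neq\phi(I)$ by testing $(a_1,1,\dots,1),(1,a_2,1,\dots,1),\dots,(1,\dots,1,a_{n+1})$ for a single $(a_1,\dots,a_{n+1})\in I\setminus\phi(I)$, whereas you case-split at the outset on whether some $I_k=R_k$ and, in the all-proper case, force $I=\phi(I)$ with the tuple $(z,1,\dots,1),\epsilon_2,\dots,\epsilon_{n+1}$ --- the same device the paper deploys in Theorem~\ref{field}.
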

\begin{proof}
Let $I$ be an ideal of $R$. We know that the ideal $I$ is of the form $I_{1}\times\cdots\times I_{n+1}$ where $I_{i}$'s are ideals of
$R_{i}$'s, respectively. \\
(1) Suppose that $\phi(I)$ is an $n$-absorbing primary ideal of $R$. Since
$$(0,1,\dots,1)(1,0,1,\dots,1)\cdots(1,\dots,1,0)=(0,\dots,0)$$
$$\in\phi(I)=\psi_1(I_1)\times\cdots\times\psi_{n+1}(I_{n+1}),$$
we have that either 
$$(0,1,\dots,1)(1,0,1,\dots,1)\cdots(1,\dots,1,0,1)=(0,\dots,0,1)$$
$$\in\psi_1(I_1)\times\cdots\times\psi_{n+1}(I_{n+1}),$$
or the product of $(1,\dots,1,0)$ with $n-1$ of $(0,1,\dots,1),(1,0,1,\dots,1),\dots,\\(1,\dots,1,0,1)$ is in $\sqrt{\phi(I)}$. Hence, for some 
$1\leq i\leq n+1$, $1\in\psi_i(I_i)$ which implies that $\psi_i(R_i)=R_i$, a contradiction. Consequently $\phi(I)$ is not an $n$-absorbing primary ideal of $R$.\\
(2) If $I=\phi(I)$, then clearly $I$ is a $\phi$-$n$-absorbing primary ideal of $R$, we may assume
that $I=I_{1}\times\cdots\times I_{n+1}\neq\psi_1(I_1)\times\psi_2(I_2)\times\cdots\times\psi_{n+1}(I_{n+1})$. So, there is an element
$(a_{1},\dots,a_{n+1})\in I\backslash(\psi_1(I_1)\times\psi_2(I_2)\times\cdots\times\psi_{n+1}(I_{n+1}))$. Then $(a_{1},1,\dots,1)(1,a_{2},1,\dots,1)\cdots(1,\dots,1,a_{n+1})\in I.$ Since $I$ is a $\phi$-$n$-absorbing primary ideal of $R$, then either
\begin{eqnarray*}
(a_{1},1,\dots,1)(1,a_{2},1,\dots,1)\cdots(1,\dots,1,a_{n},1)\\
=(a_{1},a_2,\dots,a_{n},1)\in I.
\end{eqnarray*}
or, for some $1\leq i\leq n$ we have
$$
(a_{1},1,\dots,1)\cdots(1,\dots,1,a_{i-1},1,\dots,1)(1,\dots,1,a_{i+1},1,\dots,1)\cdots$$
$$(1,\dots,1,a_{n+1})=(a_{1},\dots,a_{i-1},1,a_{i+1},\dots,a_{n+1})\in\sqrt{I}.
$$
Then $I_{i}=R_{i}$, for some $1\leq i\leq n+1$ and so $I=I_1\times\cdots I_{i-1}\times R_i\times I_{i+1}\times\cdots I_{n+1}$. If $I\subseteq\sqrt{\phi(I)}$, then $\psi_i(R_i)=R_i$ which is a contradiction. Therefore, by Theorem \ref{nil}, 
$I$ must be an $n$-absorbing primary ideal of $R$.
\end{proof}

\begin{theorem}
Let $R=R_{1}\times\cdots\times R_{n+1}$ where $R_{i}$'s are 
rings with identity and let  for $i= 1, 2,\dots,n+1$, $\psi_i : {\mathfrak{I}}(R_i)\to {\mathfrak{I}}(R_i)\cup \{\emptyset\}$ be a function such that $\psi_{i}(R_{i})\neq R_i$. Set $\phi=\psi_1\times\psi_2\times\cdots\times\psi_{n+1}$. Let $L=I_{1}\times\dots\times I_{n+1}$ be a proper ideal of $R$ with $L\neq\phi(L)$. The following conditions are equivalent:
\begin{enumerate}
\item $L=I_{1}\times\dots\times I_{n+1}$ is a  $\phi$-$n$-absorbing primary ideal of $R$;
\item $L=I_{1}\times\dots\times I_{n+1}$ is an $n$-absorbing primary ideal of $R$;
\item $L=I_{1}\times\dots\times I_{i-1}\times R_{i}\times I_{i+1}\times\cdots\times I_{n+1}$ for some $1\leq i\leq n+1$ such that for each $1\leq t\leq n+1$ different from $i$, $I_{t}$ is a primary ideal of $R_{t}$ or $L=I_{1}\times\dots\times I_{\alpha_{1}-1}\times R_{\alpha_{1}}\times I_{\alpha_{1}+1}\times\dots\times I_{\alpha_{j}-1}\times R_{\alpha_{j}}\times I_{\alpha_{j}+1}\cdots\times I_{n+1}$ in which $\{\alpha_{1},\dots,\alpha_{j}\}\subsetneq\{1,\dots,n+1\}$ and \vspace{-2mm}
$$\hspace{-6mm}I_{1}\times\dots\times I_{\alpha_{1}-1}\times I_{\alpha_{1}+1}\times\dots\times I_{\alpha_{j}-1}\times I_{\alpha_{j}+1}\cdots\times I_{n+1}$$ 
is an $n$-absorbing primary ideal of\vspace{-3mm} \[\hspace{5mm} R_{1}\times\dots\times R_{\alpha_{1}-1}\times R_{\alpha_{1}+1}\times\dots\times R_{\alpha_{j}-1}\times R_{\alpha_{j}+1}\times\cdots\times R_{n+1}.\]
\end{enumerate}
\end{theorem}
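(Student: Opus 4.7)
The plan is to establish the cycle $(1) \Rightarrow (2) \Rightarrow (3) \Rightarrow (2) \Rightarrow (1)$. The implication $(2) \Rightarrow (1)$ is immediate, and $(1) \Rightarrow (2)$ is a direct application of Theorem \ref{nonzero}(2): since $L \neq \phi(L)$ excludes the first alternative there, $L$ must be $n$-absorbing primary.

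For $(2) \Rightarrow (3)$, I would set $S = \{i \in \{1,\dots,n+1\} : I_i = R_i\}$; since $L$ is proper, $|S| \leq n$. First I would show $|S| \geq 1$: otherwise, form the elements $e_i = (1,\dots,1,0,1,\dots,1)$ with $0$ in coordinate $i$, so that $e_1 \cdots e_{n+1} = 0 \in L$, and then either alternative of the $n$-absorbing primary property forces $1 \in I_k$ or $1 \in \sqrt{I_k}$ for some $k$, contradicting properness of all $I_k$. If $|S| \geq 2$, the standing hypothesis $\psi_i(R_i) \neq R_i$ together with the equivalence $(2) \Leftrightarrow (3)$ of Theorem \ref{T4} yields the second alternative of (3). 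If $|S| = 1$, say $I_i = R_i$, then for each $t \neq i$ I must show $I_t$ is primary: given $a, b \in R_t$ with $ab \in I_t$, form the $n+1$ elements $u = (1,\dots,1,a,1,\dots,1)$ and $v = (1,\dots,1,b,1,\dots,1)$ (with $a, b$ in coordinate $t$) together with $w_s = (1,\dots,1,0,1,\dots,1)$ having $0$ in coordinate $s$, one for each $s \notin \{i, t\}$. Their product lies in $L$. Dropping any $w_s$ would leave a product whose $s$-entry is $1$, forcing $1 \in \sqrt{I_s}$ and contradicting properness of $I_s$; so ordering the factors with $u$ in position $1$ and $v$ in position $n+1$, the $n$-absorbing primary property gives either $u \prod_s w_s \in L$ (yielding $a \in I_t$) or $v \prod_s w_s \in \sqrt{L}$ (yielding $b \in \sqrt{I_t}$), which is exactly the primary condition for $I_t$.

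For $(3) \Rightarrow (2)$, the second alternative is Theorem \ref{T4} $(3) \Rightarrow (2)$. For the first alternative (one $R_i$-factor, the other $n$ factors primary), take $a^{(1)}, \dots, a^{(n+1)} \in R$ with product in $L$; in each coordinate $t \neq i$, $\sqrt{I_t}$ is prime, so some $a^{(\sigma(t))}_t \in \sqrt{I_t}$. The map $\sigma$ from $n$ coordinates into $\{1,\dots,n+1\}$ omits some index $k_0$ by pigeonhole, and dropping $a^{(k_0)}$ leaves a product whose coordinate $t \neq i$ entries each lie in $\sqrt{I_t}$, hence in $\sqrt{L}$ (the coordinate $i$ entry is trivially in $R_i = \sqrt{R_i}$). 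Relabeling so that $a^{(k_0)}$ appears in the first $n$ positions and some other index sits as $a_{n+1}$ realizes the second clause of the $n$-absorbing primary definition.

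The main obstacle is the case $|S| = 1$ in $(2) \Rightarrow (3)$: one must engineer the elements $u, v, w_s$ and choose the correct ordering of the $n+1$ factors so that the $n$-absorbing primary disjunction collapses precisely to the primary condition for $I_t$, rather than to a weaker mixed outcome such as $a \in \sqrt{I_t}$ with $b \notin \sqrt{I_t}$.
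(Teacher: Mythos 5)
Your overall route coincides with the paper's: $(1)\Leftrightarrow(2)$ via Theorem \ref{nonzero}, the explicit idempotent-style element construction for $(2)\Rightarrow(3)$, and Theorem \ref{T4} for the case of two or more full factors. The $(2)\Rightarrow(3)$ argument in the $|S|=1$ case is correct and is exactly the paper's construction. However, there is a genuine gap in your $(3)\Rightarrow(2)$ argument for the first alternative (one factor equal to $R_i$, the rest primary). The definition of an $n$-absorbing primary ideal is \emph{not} symmetric in the $n+1$ factors: given the ordered tuple $a^{(1)},\dots,a^{(n+1)}$ with product in $L$, the allowed conclusions are $a^{(1)}\cdots a^{(n)}\in L$ (membership in $L$ itself, with the distinguished index $n+1$ omitted) or $a^{(1)}\cdots\widehat{a^{(k)}}\cdots a^{(n+1)}\in\sqrt{L}$ with $k\leq n$. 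Your pigeonhole only produces \emph{some} omittable index $k_0$, and nothing prevents the only omittable index from being $k_0=n+1$ (take, for each coordinate $t\neq i$, the set $S_t=\{j: a^{(j)}_t\in\sqrt{I_t}\}$ to be a distinct singleton contained in $\{1,\dots,n\}$); in that situation your argument yields only $a^{(1)}\cdots a^{(n)}\in\sqrt{L}$, which is not one of the allowed conclusions, and ``relabeling'' is not permitted since the tuple's order is given. Indeed, for $n=1$ your argument proves only that $\sqrt{L}$ is prime, not that $L$ is primary, and these are genuinely different.

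The repair requires using the full primary property of the $I_t$, not merely the primeness of $\sqrt{I_t}$, with a case distinction as in the paper's proof of Theorem \ref{prod}$(2)\Rightarrow(3)$: if $a^{(1)}\cdots a^{(n)}\notin L$, pick a coordinate $t_0\neq i$ with $a^{(1)}_{t_0}\cdots a^{(n)}_{t_0}\notin I_{t_0}$; since $I_{t_0}$ is primary and the full product lies in $I_{t_0}$, this forces $a^{(n+1)}_{t_0}\in\sqrt{I_{t_0}}$, so coordinate $t_0$ never obstructs omitting an index $k\leq n$. The remaining $n-1$ coordinates rule out at most $n-1$ of the $n$ candidate indices $k\in\{1,\dots,n\}$ (namely those $k$ with $S_t=\{k\}$), so a good $k\leq n$ survives and $a^{(1)}\cdots\widehat{a^{(k)}}\cdots a^{(n+1)}\in\sqrt{L}$. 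With this correction the proof is complete and matches the paper's.
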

\begin{proof}
(1)$\Rightarrow$(2) Since $L$ is a $\phi$-$n$-absorbing primary ideal of $R$ and $L\neq\phi(L)$, then $L$ is an $n$-absorbing primary
ideal of $R$, by Theorem \ref{nonzero}.\\ 
(2)$\Rightarrow$(3) Suppose that $L$ is an $n$-absorbing primary ideal
of $R$, then for some $1\leq i\leq n+1$, $I_{i}=R_{i}$ by the proof
of Theorem \ref{nonzero}. Assume that $L=I_{1}\times\dots\times I_{i-1}\times R_{i}\times I_{i+1}\times\cdots\times I_{n+1}$ for $1\leq i\leq n+1$ such that for each $1\leq t\leq n+1$ different from $i$, $I_{t}$ is a proper ideal of $R_{t}$. Fix an $I_{t}$ different from $I_{i}$. We may assume that $t>i$. Let $ab\in I_{t}$ for some $a,b\in R_{t}$. In this case
{\small $$
\begin{array}{ll}
(0,1,\dots,1)(1,0,1,\dots,1)\cdots(1,\dots,1,0,\overbrace{1}^{i-th},\dots,1)(1,\dots,\overbrace{1}^{i-th},0,1,\dots,1)\cdots\\
(1,\dots,1,0,\overbrace{1}^{t-th},\dots,1)(1,\dots,\overbrace{1}^{t-th},0,1,\dots,1)\cdots(1,\dots,1,0)(1,\dots,1,\overbrace{a}^{t-th},1,\dots,1)\\
(1,\dots,1,\overbrace{b}^{t-th},1,\dots,1)=(0,\dots,0,\overbrace{1}^{i-th},0,\dots,0,\overbrace{ab}^{t-th},0,\dots,0)\in L.
\end{array}
$$}
Since $I_{1}\times\cdots\times I_{n+1}$ is $n$-absorbing primary and $I_{j}$'s different from $I_{i}$ are proper, then either
{\small $$
\begin{array}{ll}
(0,1,\dots,1)(1,0,1,\dots,1)\cdots(1,\dots,1,0,\overbrace{1}^{i-th},\dots,1)
(1,\dots,\overbrace{1}^{i-th},0,1,\dots,1)
\cdots\\(1,\dots,1,0,\overbrace{1}^{t-th},\dots,1)(1,\dots,\overbrace{1}^{t-th},0,1,\dots,1)
\cdots(1,\dots,1,0)(1,\dots,1,\overbrace{a}^{t-th},1,\dots,1)\\
\hspace{2.8cm}=(0,\dots,0,\overbrace{1}^{i-th},0,\dots,0,\overbrace{a}^{t-th},0,\dots,0)\in L,
\end{array}
$$}
or
{\small $$
\begin{array}{ll}
(0,1,\dots,1)(1,0,1,\dots,1)\cdots(1,\dots,1,0,\overbrace{1}^{i-th},\dots,1)
(1,\dots,\overbrace{1}^{i-th},0,1,\dots,1)\cdots\\
(1,\dots,1,0,\overbrace{1}^{t-th},\dots,1)(1,\dots,\overbrace{1}^{t-th},0,1,\dots,1)
\cdots(1,\dots,1,0)(1,\dots,1,\overbrace{b}^{t-th},1,\dots,1)\\
\hspace{2.8cm}=(0,\dots,0,\overbrace{1}^{i-th},0,\dots,0,\overbrace{b}^{t-th},0,\dots,0)\in\sqrt{L},
\end{array}
$$}
and thus either $a\in I_t$ or $b\in\sqrt{I_t}$. Consequently $I_t$ is a primary ideal of $R_t$.\\
Now, assume that $$L=I_{1}\times\dots\times I_{\alpha_{1}-1}\times R_{\alpha_{1}}\times I_{\alpha_{1}+1}\times\dots\times I_{\alpha_{j}-1}\times R_{\alpha_{j}}\times I_{\alpha_{j}+1}\times\cdots\times I_{n+1}$$ in which $\{\alpha_{1},\dots,\alpha_{j}\}\subsetneq\{1,\dots,n+1\}$.
Since $L$ is $n$-absorbing primary, then $I_{1}\times\dots\times I_{\alpha_{1}-1}\times I_{\alpha_{1}+1}\times\dots\times I_{\alpha_{j}-1}\times I_{\alpha_{j}+1}\cdots\times I_{n+1}$ is an $n$-absorbing primary ideal of $$R_{1}\times\dots\times R_{\alpha_{1}-1}\times R_{\alpha_{1}+1}\times\dots\times R_{\alpha_{j}-1}\times R_{\alpha_{j}+1}\times\cdots\times R_{n+1},$$ by Theorem \ref{T4}.\\
(3)$\Rightarrow$(1) If $L$ is in the first form, then similar to the proof of part (2)$\Rightarrow$(3) of Theorem \ref{prod} we can verify that
$L$ is an $n$-absorbing primary ideal of $R$, and hence $L$ is a $\phi$-$n$-absorbing primary ideal of $R$. For the second form apply Theorem \ref{T4}.
\end{proof}

\begin{theorem}\label{field}
Let $R=R_{1}\times\cdots\times R_{n+1}$ where $R_{i}$'s are 
rings with identity and let  for $i= 1, 2,\dots,n+1$, $\psi_i : {\mathfrak{I}}(R_i)\to {\mathfrak{I}}(R_i)\cup \{\emptyset\}$ be a function. Set $\phi=\psi_1\times\psi_2\times\cdots\times\psi_{n+1}$. If every proper ideal of $R$ is a $\phi$-$n$-absorbing primary ideal $(\phi$-$n$-absorbing ideal$)$ of $R$, then $I=\psi_i(I)$ for every $1\leq i\leq n+1$ and every proper ideal $I$ of $R_i$. The converse holds if in addition $\psi_i(R_i)=R_i$ for every $1\leq i\leq n+1$.
\end{theorem}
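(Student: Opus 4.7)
For the converse direction, assume $I=\psi_i(I)$ for every proper ideal $I$ of $R_i$ and $\psi_i(R_i)=R_i$ for each $i$. Any proper ideal $L$ of $R$ factors as $L=L_1\times\cdots\times L_{n+1}$, and in each coordinate either $L_k$ is proper (so $\psi_k(L_k)=L_k$ by hypothesis) or $L_k=R_k$ (so $\psi_k(L_k)=R_k=L_k$). Thus $\phi(L)=L$, hence $L\setminus\phi(L)=\emptyset$, and $L$ is both $\phi$-$n$-absorbing primary and $\phi$-$n$-absorbing vacuously.

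For the forward direction, I fix an index $i\in\{1,\dots,n+1\}$, a proper ideal $I$ of $R_i$, and an element $a\in I$; the goal is to show $a\in\psi_i(I)$. Let $L$ be the proper ideal of $R$ equal to $I$ at coordinate $i$ and $\{0\}$ at every other coordinate. For each $k\in\{1,\dots,n+1\}\setminus\{i\}$, let $p_k\in R$ have $0$ in coordinate $k$ and $1$'s elsewhere, and let $p_i\in R$ have $a$ in coordinate $i$ and $1$'s elsewhere. Then $p_1 p_2\cdots p_{n+1}$ is the element with $a$ in coordinate $i$ and $0$ in every other coordinate, so it lies in $L$. Since the $i$-th coordinate of $\phi(L)$ is $\psi_i(I)$, the assumption $a\notin\psi_i(I)$ forces $p_1 p_2\cdots p_{n+1}\in L\setminus\phi(L)$.

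Next I apply the $\phi$-$n$-absorbing primary hypothesis to this product, placing $p_i$ in the role of the distinguished factor $a_{n+1}$. The first alternative, $\prod_{k\ne i}p_k\in L$, fails because that product is the idempotent whose $i$-th coordinate is $1$, which would force $1\in I$. In the second alternative, for some $j\ne i$ the product $p_i\prod_{k\ne i,\,k\ne j}p_k$ must lie in $\sqrt{L}$; computing, this element has $a$ at coordinate $i$, $1$ at coordinate $j$, and $0$ at every remaining coordinate, so membership in $\sqrt{L}$ would require $1\in\sqrt{\{0\}}=Nil(R_j)$, which is impossible since $R_j$ has $1\ne 0$. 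Hence $a\in\psi_i(I)$, proving $I=\psi_i(I)$. The $\phi$-$n$-absorbing variant follows by the same construction with $\sqrt{L}$ replaced by $L$: the same calculations yield $1\in I$ or $1\in\{0\}$ inside $R_j$, both impossible.

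The delicate point is the choice of $L$: putting $\{0\}$ in the off-$i$ coordinates (rather than $R_k$, as one might try by analogy with Theorem \ref{T4}) shrinks $\sqrt{L}$ at those coordinates to $Nil(R_k)$, which is precisely what forbids a $1$ from sitting there. With $R_k$ in place of $\{0\}$, the second alternative of the $\phi$-$n$-absorbing primary definition would be satisfiable and the argument would not close; that is the only step in the proof where the specific shape of the chosen proper ideal really matters.
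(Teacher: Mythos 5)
Your proof is correct and follows essentially the same route as the paper: the same test ideal $L$ with $I$ in the $i$-th slot and $\{0\}$ elsewhere, the same $n+1$ factors, and the same two contradictions $1\in I$ and $1\in\mathrm{Nil}(R_j)$. You additionally write out the (easy, vacuous) converse, which the paper states but does not bother to prove.
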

\begin{proof}
Fix an $i$ and let $I$ be a proper ideal of $R_i$. Assume that  $I\neq\psi_i(I)$, so give an element 
$x\in I\backslash\psi_i(I)$. Set $$J:=I\times\{0\}\cdots\times\{0\}.$$Notice that 
$$(1,0,1,\dots,1)(1,1,0,1,\dots,1)\cdots(1,\dots,1,0)(x,1,\dots,1)\in J\backslash\phi(J)$$
Since $I$ is $\phi$-$n$-absorbing primary, then either $$(1,0,1,\dots,1)(1,1,0,1,\dots,1)\cdots(1,\dots,1,0)\in J,$$ 
or the product of $(x,1,\dots,1)$ with $n-1$ of $(1,0,1,\dots,1),(1,1,0,1,\dots,1),\dots,\\(1,\dots,1,0)$ is in $\sqrt{J}$ which
implies that either $1\in I$ or $1\in\{0\}$, a contradiction. Consequently $I=\psi_i(I)$.
\end{proof}

\begin{corollary}
Let $n\geq 2$ be a natural number and $R=R_{1}\times\cdots\times R_{n+1}$ be a decomposable 
ring with identity. The following conditions are equivalent:
\begin{enumerate}
\item $R$ is a von Neumann regular ring;
\item Every proper ideal of $R$ is an $n$-almost $n$-absorbing primary ideal of $R$;
\item Every proper ideal of $R$ is an $\omega$-$n$-absorbing primary ideal of $R$;
\item Every proper ideal of $R$ is an $n$-almost $n$-absorbing ideal of $R$.
\end{enumerate}
\end{corollary}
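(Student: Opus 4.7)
The plan is to prove the cycle $(1)\Rightarrow(4)\Rightarrow(2)\Rightarrow(1)$ together with $(1)\Leftrightarrow(3)$. First I would dispatch the implications out of $(1)$: in a von Neumann regular ring every ideal is idempotent, so $I^m=I$ for each $m\geq 1$ and hence $\phi_n(I)=\phi_\omega(I)=I$, giving $I\backslash\phi_n(I)=\emptyset=I\backslash\phi_\omega(I)$. Every proper ideal of $R$ therefore vacuously satisfies the $n$-almost $n$-absorbing, $n$-almost $n$-absorbing primary, and $\omega$-$n$-absorbing primary conditions. For $(4)\Rightarrow(2)$ I would record the general fact that a $\phi$-$n$-absorbing ideal is automatically $\phi$-$n$-absorbing primary: if some product of $n$ of $a_1,\dots,a_{n+1}$ lies in $I$, that product is either $a_1\cdots a_n$ or of the form $a_1\cdots\widehat{a_i}\cdots a_{n+1}$ for some $i\leq n$; in the latter case it is $a_{n+1}$ together with $n-1$ of $a_1,\dots,a_n$ and lies in $I\subseteq\sqrt{I}$.

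The core implications are $(2)\Rightarrow(1)$ and $(3)\Rightarrow(1)$, which I would handle uniformly through Theorem~\ref{field}. Every ideal of $R=R_1\times\cdots\times R_{n+1}$ is of the form $I_1\times\cdots\times I_{n+1}$, and since multiplication and intersection are componentwise,
\[\phi_n(I_1\times\cdots\times I_{n+1})=I_1^n\times\cdots\times I_{n+1}^n,\]
\[\phi_\omega(I_1\times\cdots\times I_{n+1})=\bigl(\textstyle\bigcap_{m\geq1} I_1^m\bigr)\times\cdots\times\bigl(\textstyle\bigcap_{m\geq1} I_{n+1}^m\bigr).\]
Thus in both cases $\phi=\psi_1\times\cdots\times\psi_{n+1}$, where $\psi_i$ is the corresponding map on $\mathfrak{J}(R_i)$. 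Theorem~\ref{field} then forces $\psi_i(J)=J$ for every proper ideal $J$ of $R_i$, so either $J^n=J$ or $J=\bigcap_{m\geq 1}J^m\subseteq J^2$; using $n\geq 2$, both give $J=J^2$.

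It remains to extract von Neumann regularity of $R$ from the idempotence of every ideal in each $R_i$. For each $x\in R_i$ the principal ideal $\langle x\rangle$ is idempotent, so $x\in\langle x\rangle^2=\langle x^2\rangle$, which yields $x=rx^2$ for some $r\in R_i$; hence $R_i$ is von Neumann regular, and so is the finite product $R$. The main obstacle, modest in nature, is the bookkeeping for the splitting of $\phi_n$ and $\phi_\omega$ across the direct product and verifying that Theorem~\ref{field} applies with the factorwise $\psi_i$; once this is in place, every further step is routine.
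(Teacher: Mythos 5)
Your proposal is correct and follows essentially the same route as the paper: both reduce the problem via Theorem~\ref{field} to the observation that $\phi_n(J)=J$ (for $n\geq 2$) or $\phi_\omega(J)=J$ is equivalent to $J=J^2$, combined with the characterization of von Neumann regular rings by idempotence of ideals, while the implications out of (1) are vacuous because $I\backslash\phi(I)=\emptyset$. Your extra step $(4)\Rightarrow(2)$ and the explicit componentwise splitting of $\phi_n$ and $\phi_\omega$ are just bookkeeping the paper leaves implicit.
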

\begin{proof}
(1)$\Leftrightarrow$(2), (1)$\Leftrightarrow$(3) and (1)$\Leftrightarrow$(4): Notice that, $\phi_{n}(I)=I$ (or $\phi_{\omega}(I)=I$)
if and only if $I=I^{2}$. By the fact that $R$ is von Neumann regular if and only if $I=I^2$ for every ideal $I$ of $R$ and regarding Theorem \ref{field} we have the implications.
\end{proof}

\begin{corollary}
Let $R_1,R_2,\dots,R_{n+1}$ be  rings and let $R = R_1\times R_2\times\cdots\times R_{n+1}$. Then the following conditions are equivalent:
 \begin{enumerate}
 \item $R_1,R_2,\dots,R_{n+1}$ are fields;
 \item Every proper ideal of $R$ is a weakly $n$-absorbing ideal of $R$;
 \item Every proper ideal of $R$ is a weakly $n$-absorbing primary ideal of $R$.
 \end{enumerate}
\end{corollary}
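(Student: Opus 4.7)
The plan is to prove the equivalences cyclically as $(1)\Rightarrow(2)\Rightarrow(3)\Rightarrow(1)$. The two short implications use material already in the paper; the main work is $(1)\Rightarrow(2)$, which I do by a direct combinatorial verification in a product of fields.

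For $(1)\Rightarrow(2)$, suppose every $R_i$ is a field. Then every ideal of $R$ decomposes as $I = I_1\times\cdots\times I_{n+1}$ with each $I_i\in\{0,R_i\}$, and properness of $I$ forces $S:=\{i:I_i=0\}$ to be nonempty. If $|S|=n+1$ then $I=0$ and the weakly $n$-absorbing condition is vacuous, so assume $1\le|S|\le n$. Take $a_1,\dots,a_{n+1}\in R$ with $0\neq a_1\cdots a_{n+1}\in I$ and write $a_k=(a_k^{(1)},\dots,a_k^{(n+1)})$; for every $i\in S$ we have $\prod_k a_k^{(i)}=0$ in the field $R_i$, so $K_i:=\{k:a_k^{(i)}=0\}$ is nonempty. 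The crucial observation is that an index $l\in\{1,\dots,n+1\}$ gives $\prod_{k\neq l}a_k\in I$ iff $K_i\setminus\{l\}\neq\emptyset$ for every $i\in S$, i.e., iff $l$ is not the unique element of any singleton $K_i$. The set of bad $l$'s therefore has cardinality at most $|S|\le n<n+1$, so a good $l$ exists, proving that $I$ is weakly $n$-absorbing. (Note that the hypothesis $0\neq a_1\cdots a_{n+1}$ is exactly what pins down $|S|\le n$, since a nonzero coordinate of the product must sit outside $S$.)

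The remaining implications are routine. $(2)\Rightarrow(3)$: if $0\neq a_1\cdots a_{n+1}\in I$ and some $n$ of the $a_k$ have product in $I$, then either that $n$-subset is $\{a_1,\dots,a_n\}$ (giving $a_1\cdots a_n\in I$) or it consists of $a_{n+1}$ together with $n-1$ of $a_1,\dots,a_n$ (so the product lies in $I\subseteq\sqrt{I}$), and the weakly $n$-absorbing primary condition is satisfied. $(3)\Rightarrow(1)$: apply Theorem \ref{field} with each $\psi_i$ taken to be $\phi_0$ on $R_i$, so that $\phi:=\psi_1\times\cdots\times\psi_{n+1}$ coincides with $\phi_0$ on $R$ and ``$\phi$-$n$-absorbing primary'' means ``weakly $n$-absorbing primary''; Theorem \ref{field} then yields $I=\psi_i(I)=0$ for every $1\le i\le n+1$ and every proper ideal $I$ of $R_i$, so $\{0\}$ is the only proper ideal of $R_i$, i.e.\ $R_i$ is a field. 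The main obstacle is the pigeonhole bookkeeping in $(1)\Rightarrow(2)$: identifying that the nonzero hypothesis forces $|S|\le n$ and recognising that singletons among the $K_i$ are the only genuine obstruction to choosing $l$.
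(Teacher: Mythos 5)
Your proof is correct, and for two of the three implications it coincides with the paper's: $(2)\Rightarrow(3)$ is the same routine observation that an $n$-fold subproduct landing in $I$ either is $a_1\cdots a_n$ or lies in $I\subseteq\sqrt{I}$, and $(3)\Rightarrow(1)$ is exactly the paper's application of Theorem \ref{field} with $\phi=0$ (your choice $\psi_i=\phi_0$ is precisely what the paper means by ``assume that $\phi=0$''). The genuine difference is $(1)\Rightarrow(2)$: the paper simply cites \cite[Theorem 1.10]{EN}, whereas you give a self-contained pigeonhole argument -- decomposing $I=I_1\times\cdots\times I_{n+1}$ with each factor $0$ or $R_i$, noting that the nonvanishing of $a_1\cdots a_{n+1}$ forces $|S|\le n$, and observing that the only indices $l$ obstructing $\prod_{k\neq l}a_k\in I$ are those for which some $K_i$ is the singleton $\{l\}$, of which there are at most $|S|\le n$. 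This is a clean and correct replacement for the external citation; it also shows the slightly stronger fact that every nonzero proper ideal of a product of $n+1$ fields is actually $n$-absorbing, with the ``weakly'' qualifier needed only for the zero ideal. The trade-off is purely one of self-containment versus brevity; nothing in your argument is missing or in error.
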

\begin{proof}
(1)$\Rightarrow$(2) By \textrm{\cite[Theorem 1.10]{EN}}.\\
(2)$\Rightarrow$(3) is clear.\\
(3)$\Rightarrow$(1) In Theorem \ref{field} assume that $\phi=0$.\\
\end{proof}

\section{The stability of $\phi$-$n$-absorbing primary ideals with respect to idealization}

Let $R$ be a commutative ring and $M$ be an $R$-module. We recall from  \cite[Theorem 25.1]{Huk} that every ideal of $R(+)M$ is in the form of 
$I(+)N$ in which $I$ is an ideal of $R$ and $N$ is a submodule of $M$ such that $IM\subseteq N$. Moreover, if $I_1(+)N_1$ and 
$I_2(+)N_2$ are ideals of $R(+)M$, then $(I_1(+)N_1)\cap(I_2(+)N_2)=(I_1\cap I_2)(+)(N_1\cap N_2)$.

\begin{theorem}\label{idealiz}
Let $R$ be a ring, $I$ a proper ideal of $R$ and $M$ an $R$-module. Suppose that $\psi: {\mathfrak{I}}(R)\to {\mathfrak{I}}(R)\cup \{\emptyset\}$ and $\phi:{\mathfrak{I}}(R(+)M)\to {\mathfrak{I}}(R(+)M)\cup \{\emptyset\}$ are two functions such that $\phi(I(+)M)=\psi(I)(+)N$ for some submodule $N$ of $M$
with $\psi(I)M\subseteq N$. Then the following conditions are equivalent:
\begin{enumerate}
\item $I(+)M$ is a $\phi$-$n$-absorbing primary ideal of $R(+)M$;
\item $I$ is a $\psi$-$n$-absorbing primary ideal of $R$ and if $(a_1,\dots,a_{n+1})$ is a $\psi$-$(n+1)$-tuple, then
 the second component of $(a_1,m_1)\cdots(a_{n+1},m_{n+1})$ is in $N$ for any elements $m_1,\dots,m_{n+1}\in M$.
\end{enumerate}
\end{theorem}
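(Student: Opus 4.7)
The plan is to prove $(1)\Leftrightarrow(2)$ by translating between elements of $R$ and $R(+)M$, using two facts: the first coordinate of a product $(a_1,m_1)\cdots(a_k,m_k)$ in $R(+)M$ is $a_1\cdots a_k$, and $\sqrt{I(+)M}=\sqrt{I}(+)M$. Together with the hypothesis $\phi(I(+)M)=\psi(I)(+)N$, these allow me to move freely between multiplicative structure in the two rings.

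For $(1)\Rightarrow(2)$, I would first test $\psi$-$n$-absorbing primarity of $I$ using the zero section. Given $a_1\cdots a_{n+1}\in I\setminus\psi(I)$, the element $(a_1,0)\cdots(a_{n+1},0)=(a_1\cdots a_{n+1},0)$ lies in $I(+)M\setminus\phi(I(+)M)$ because its first coordinate avoids $\psi(I)$. Applying (1) and reading first coordinates gives either $a_1\cdots a_n\in I$ or $a_1\cdots\widehat{a_i}\cdots a_{n+1}\in\sqrt{I}$ for some $1\le i\le n$. For the $\psi$-$(n+1)$-tuple condition, assume $(a_1,\ldots,a_{n+1})$ is such a tuple and fix arbitrary $m_i\in M$. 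Supposing the second coordinate of $(a_1,m_1)\cdots(a_{n+1},m_{n+1})$ were outside $N$, the whole product would still lie in $I(+)M$ (first coordinate is in $\psi(I)\subseteq I$) but not in $\psi(I)(+)N=\phi(I(+)M)$, so (1) would force either $a_1\cdots a_n\in I$ or some $a_1\cdots\widehat{a_i}\cdots a_{n+1}\in\sqrt{I}$; both contradict the tuple hypothesis.

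For $(2)\Rightarrow(1)$, take $(a_1,m_1)\cdots(a_{n+1},m_{n+1})\in I(+)M\setminus\phi(I(+)M)$. If $a_1\cdots a_{n+1}\notin\psi(I)$, the $\psi$-$n$-absorbing primary property of $I$ directly yields either $a_1\cdots a_n\in I$ (so $(a_1,m_1)\cdots(a_n,m_n)\in I(+)M$) or $a_1\cdots\widehat{a_i}\cdots a_{n+1}\in\sqrt{I}$ for some $i$ (so the corresponding $n$-fold product lies in $\sqrt{I}(+)M=\sqrt{I(+)M}$). If instead $a_1\cdots a_{n+1}\in\psi(I)$, the failure of the product to lie in $\psi(I)(+)N$ forces the second coordinate outside $N$. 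Assuming neither desired conclusion holds for the $(a_i,m_i)$ product, the first coordinates must satisfy $a_1\cdots a_n\notin I$ and $a_1\cdots\widehat{a_i}\cdots a_{n+1}\notin\sqrt{I}$ for every $1\le i\le n$; hence $(a_1,\ldots,a_{n+1})$ is a $\psi$-$(n+1)$-tuple, and the second half of hypothesis (2) forces the second coordinate into $N$, a contradiction.

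The main bookkeeping point, and arguably the only nontrivial step, is recognizing that in the case $a_1\cdots a_{n+1}\in\psi(I)$ of $(2)\Rightarrow(1)$, exhausting the alternatives for the $(a_i,m_i)$ product produces exactly the defining conditions of a $\psi$-$(n+1)$-tuple, thereby activating the second clause of (2). Everything else is routine translation, relying on $\sqrt{I(+)M}=\sqrt{I}(+)M$ and on the fact that a pair lies in $\psi(I)(+)N$ precisely when both coordinates lie in the respective pieces.
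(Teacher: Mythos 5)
Your argument is correct and follows essentially the same route as the paper's proof: the zero section $(x_i,0)$ to transfer the $\psi$-$n$-absorbing primary property, the identification $\sqrt{I(+)M}=\sqrt{I}(+)M$, and the case split on whether $a_1\cdots a_{n+1}$ lies in $\psi(I)$ in the converse direction, where the $\psi$-$(n+1)$-tuple clause is activated exactly as in the paper. No gaps.
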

\begin{proof}
$(1)\Rightarrow(2)$ Assume that $I(+)M$ is a $\phi$-$n$-absorbing primary ideal of $R(+)M$. Let $x_1\cdots x_{n+1}\in I\backslash\psi(I)$
for some $x_1,\dots,x_{n+1}\in R$. Therefore $$(x_1,0)\cdots(x_{n+1},0)=(x_1\cdots x_{n+1},0)\in I(+)M\backslash\phi(I(+)M),$$
because $\phi(I(+)M)=\psi(I)(+)N$. Hence either $(x_1,0)\cdots(x_{n},0)=(x_1\cdots x_{n},0)\in I(+)M$
or $(x_1,0)\cdots\widehat{(x_i,0)}\cdots(x_{n+1},0)=(x_1\cdots\widehat{x_i}\cdots x_{n+1},0)\in\sqrt{I(+)M}=\sqrt{I}(+)M$ 
for some $1\leq i\leq n$. So either $x_1\cdots x_n\in I$ or $x_1\cdots\widehat{x_i}\cdots x_{n+1}\in\sqrt{I}$ for some $1\leq i\leq n$
which shows that $I$ is $\psi$-$n$-absorbing primary. For the second statement suppose that $a_1\cdots a_{n+1}\in\psi(I)$, $a_1\cdots a_n\notin I$ and $a_1\cdots\widehat{a_i}\cdots a_{n+1}\notin\sqrt{I}$ for all $1\leq i\leq n$. If the second component of $(a_1,m_1)\cdots(a_{n+1},m_{n+1})$ is not in $N$, then $$(a_1,m_1)\cdots(a_{n+1},m_{n+1})\in I(+)M\backslash\psi(I)(+)N,$$
Thus either $(a_1,m_1)\cdots(a_{n},m_{n})\in I(+)M$ or $(a_1,m_1)\cdots\widehat{(a_i,m_i)}\cdots(a_{n+1},m_{n+1})\in\sqrt{I}(+)M$ for some $1\leq i\leq n$. So either $a_1\cdots a_n\in I$ or $a_1\cdots\widehat{a_i}\cdots a_{n+1}\in\sqrt{I}$ for some $1\leq i\leq n$,
which is a contradiction.\\
$(2)\Rightarrow(1)$ Suppose that $(a_1,m_1)\cdots(a_{n+1},m_{n+1})\in I(+)M\backslash\psi(I)(+)N$ for some $a_1,\dots,a_{n+1}\in R$
and some $m_1,\dots,m_{n+1}\in M$. Clearly $a_1\cdots a_{n+1}\in I$. If $a_1\cdots a_{n+1}\in\psi(I)$, then the second component
of $(a_1,m_1)\cdots(a_{n+1},m_{n+1})$ cannot be in $N$. Hence either $a_1\cdots a_n\in I$ or $a_1\cdots\widehat{a_i}\cdots a_{n+1}\in\sqrt{I}$ for some $1\leq i\leq n$. If $a_1\cdots a_{n+1}\notin\psi(I)$, then $I~~~~\psi$-$n$-absorbing primary 
implies that either $a_1\cdots a_n\in I$ or $a_1\cdots\widehat{a_i}\cdots a_{n+1}\in\sqrt{I}$ for some $1\leq i\leq n$. 
Therefore we have either $(a_1,m_1)\cdots(a_{n},m_{n})\in I(+)M$ or $(a_1,m_1)\cdots\widehat{(a_i,m_i)}\cdots(a_{n+1},m_{n+1})\in\sqrt{I(+)M}$ for some $1\leq i\leq n$. Consequently 
$I(+)M$ is a $\phi$-$n$-absorbing primary ideal of $R(+)M$.
\end{proof}

\begin{corollary}
Let $R$ be a ring, $I$ be a proper ideal of $R$ and $M$ be an $R$-module. The following conditions are equivalent:
\begin{enumerate}
\item $I(+)M$ is an $n$-absorbing primary ideal of $R(+)M$;
\item $I$ is an $n$-absorbing primary ideal of $R$.
\end{enumerate}
\end{corollary}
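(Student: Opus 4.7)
The plan is to derive this as an immediate specialization of Theorem \ref{idealiz}. Take $\psi=\phi_{\emptyset}$ and $\phi=\phi_{\emptyset}$, so that $\psi(I)=\emptyset$ and $\phi(I(+)M)=\emptyset$. Under this choice, being $\phi$-$n$-absorbing primary coincides with being $n$-absorbing primary, by the definition of $\phi_{\emptyset}$. Moreover, with $\psi(I)=\emptyset$ the condition ``$a_{1}\cdots a_{n+1}\in\psi(I)$'' in the definition of a $\psi$-$(n+1)$-tuple is never satisfied, so no such tuples exist, and therefore the second clause of Theorem \ref{idealiz}(2) is vacuously true. Consequently Theorem \ref{idealiz} collapses to precisely the stated equivalence.

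The only point that needs care is that the compatibility hypothesis $\phi(I(+)M)=\psi(I)(+)N$ in Theorem \ref{idealiz} is phrased for ideals, so one should verify it is legitimate in the degenerate case when both $\phi$ and $\psi$ output $\emptyset$. In case one prefers to sidestep this technicality, I would simply reproduce the short direct argument that matches the proof of Theorem \ref{idealiz}. For $(2)\Rightarrow(1)$, take $(a_{1},m_{1})\cdots(a_{n+1},m_{n+1})\in I(+)M$, observe that $a_{1}\cdots a_{n+1}\in I$, apply the $n$-absorbing primary property of $I$ to conclude either $a_{1}\cdots a_{n}\in I$ or $a_{1}\cdots\widehat{a_{i}}\cdots a_{n+1}\in\sqrt{I}$ for some $1\leq i\leq n$, and then lift to $I(+)M$ using the identity $\sqrt{I(+)M}=\sqrt{I}(+)M$. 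For $(1)\Rightarrow(2)$, given $a_{1}\cdots a_{n+1}\in I$, pass to $(a_{1},0)\cdots(a_{n+1},0)=(a_{1}\cdots a_{n+1},0)\in I(+)M$, apply the $n$-absorbing primary property of $I(+)M$, and read off the first coordinates.

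There is no real obstacle here: the argument is purely a bookkeeping exercise using the fact that products in $R(+)M$ have first coordinate equal to the product in $R$, together with $\sqrt{I(+)M}=\sqrt{I}(+)M$. The mildest subtlety is simply ensuring that the functions $\phi$ and $\psi$ are consistently chosen so that Theorem \ref{idealiz} applies in this boundary case, which is why the direct proof above is a clean alternative.
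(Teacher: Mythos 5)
Your proposal is correct and matches the paper's own proof, which simply sets $\phi=\emptyset$, $\psi=\emptyset$ and $N=M$ in Theorem \ref{idealiz}; your observation that the $\psi$-$(n+1)$-tuple clause becomes vacuous is exactly why the specialization collapses to the stated equivalence. The supplementary direct argument is a sound but unnecessary safeguard.
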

\begin{proof}
In Theorem \ref{idealiz} set $\phi=\emptyset$, $\psi=\emptyset$ and $N=M$.
\end{proof}

\begin{corollary}
Let $R$ be a ring, $I$ be a proper ideal of $R$ and $M$ be an $R$-module. The following conditions are equivalent:
\begin{enumerate}
\item $I(+)M$ is a weakly $n$-absorbing primary ideal of $R(+)M$;
\item $I$ is a weakly $n$-absorbing primary ideal of $R$ and if $(a_1,\dots,a_{n+1})$ is an $(n+1)$-tuple-zero,
then the second component of $(a_1,m_1)\cdots(a_{n+1},m_{n+1})$ is zero for any elements $m_1,\dots, m_{n+1}\in M$.
\end{enumerate}
\end{corollary}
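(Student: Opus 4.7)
The plan is to obtain this corollary as a direct specialization of Theorem \ref{idealiz}. Recall that a weakly $n$-absorbing primary ideal is by definition a $\phi_0$-$n$-absorbing primary ideal, where $\phi_0(J)=0$ for every ideal $J$. So I will take $\psi=\phi_0$ in $R$ and $\phi=\phi_0$ in $R(+)M$, and set $N=0$.

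With these choices, the compatibility hypothesis of Theorem \ref{idealiz}, namely $\phi(I(+)M)=\psi(I)(+)N$ with $\psi(I)M\subseteq N$, becomes $0=0(+)0$ with $0\cdot M\subseteq 0$, which is trivially true. Moreover, a $\psi$-$(n+1)$-tuple of $I$ in the sense defined just before Theorem \ref{first} is precisely a tuple $(a_1,\dots,a_{n+1})$ with $a_1\cdots a_{n+1}\in \psi(I)=0$, $a_1\cdots a_n\notin I$, and $a_1\cdots\widehat{a_i}\cdots a_{n+1}\notin\sqrt{I}$ for every $1\leq i\leq n$; that is exactly an $(n+1)$-tuple-zero of $I$ in the sense used in the statement of this corollary.

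Applying Theorem \ref{idealiz} under these specializations, condition (1) there, ``$I(+)M$ is a $\phi_0$-$n$-absorbing primary ideal of $R(+)M$,'' coincides with condition (1) of the corollary. Condition (2) of Theorem \ref{idealiz} becomes: $I$ is a $\phi_0$-$n$-absorbing primary (i.e., weakly $n$-absorbing primary) ideal of $R$, together with the requirement that whenever $(a_1,\dots,a_{n+1})$ is an $(n+1)$-tuple-zero of $I$, the second component of $(a_1,m_1)\cdots(a_{n+1},m_{n+1})$ lies in $N=0$ for all $m_1,\dots,m_{n+1}\in M$, which is precisely condition (2) of the corollary. There is no obstacle beyond verifying these translations, so the proof is just a one-line appeal to Theorem \ref{idealiz} with $\psi=\phi_0$, $\phi=\phi_0$, and $N=0$.
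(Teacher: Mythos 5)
Your proposal is correct and coincides exactly with the paper's own proof, which likewise obtains the corollary by applying Theorem \ref{idealiz} with $\phi=0$, $\psi=0$, and $N=\{0\}$. Your additional remarks verifying the compatibility hypothesis and identifying $\psi$-$(n+1)$-tuples with $(n+1)$-tuple-zeros are just the routine checks the paper leaves implicit.
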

\begin{proof}
In Theorem \ref{idealiz} set $\phi=0$, $\psi=0$ and $N=\{0\}$.
\end{proof}

\begin{corollary}
Let $R$ be a ring, $I$ be a proper ideal of $R$ and $M$ be an $R$-module. Then the following conditions are equivalent:
\begin{enumerate}
\item $I(+)M$ is an $n$-almost $n$-absorbing primary ideal of $R(+)M$;
\item $I$ is an $n$-almost $n$-absorbing primary ideal of $R$ and if $(a_1,\dots,a_{n+1})$ is a $\phi_n$-$(n+1)$-tuple,
then for any elements $m_1,\dots,m_{n+1}\in M$ the second component of $(a_1,m_1)\cdots(a_{n+1},m_{n+1})$ is in $I^{n-1}M$.
\end{enumerate}
\end{corollary}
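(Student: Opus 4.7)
The plan is to derive this corollary as a direct specialization of Theorem \ref{idealiz}, exactly analogously to the preceding two corollaries (the $n$-absorbing primary case via $\phi=\emptyset$ and the weakly $n$-absorbing primary case via $\phi=0$). Here I would set $\psi := \phi_n$ on $\mathfrak{J}(R)$ and $\phi := \phi_n$ on $\mathfrak{J}(R(+)M)$, so that the respective $\phi$-$n$-absorbing primary conditions become the $n$-almost $n$-absorbing primary conditions on $R$ and on $R(+)M$.

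The only preliminary calculation is to exhibit $\phi_n(I(+)M)$ in the shape $\psi(I)(+)N$ required by Theorem \ref{idealiz}. I would verify the identity
\[
(I(+)M)^n = I^n (+) I^{n-1}M
\]
by unpacking the multiplication law in $R(+)M$: an $n$-fold product $(a_1,m_1)\cdots(a_n,m_n)$ equals
\[
\left(a_1\cdots a_n,\;\sum_{i=1}^{n}a_1\cdots\widehat{a_i}\cdots a_n\, m_i\right),
\]
whose first coordinate lies in $I^n$ and whose second coordinate lies in $I^{n-1}M$. Conversely, every generator of $I^n$ appears as such a product with all $m_i=0$, and every generator $a_1\cdots a_{n-1}m$ of $I^{n-1}M$ appears as the second coordinate of $(a_1,0)\cdots(a_{n-1},0)(0,m)$. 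Thus the formula holds, and with $N:=I^{n-1}M$ we have $\phi_n(I(+)M)=\phi_n(I)(+)N$ together with the trivial containment $\phi_n(I)M = I^n M\subseteq I^{n-1}M = N$.

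With the hypotheses of Theorem \ref{idealiz} now verified, its conclusion applies verbatim: $I(+)M$ is $\phi_n$-$n$-absorbing primary in $R(+)M$ if and only if $I$ is $\phi_n$-$n$-absorbing primary in $R$ and, for every $\psi$-$(n+1)$-tuple $(a_1,\dots,a_{n+1})$ of $I$, the second component of $(a_1,m_1)\cdots(a_{n+1},m_{n+1})$ lies in $N = I^{n-1}M$ for all $m_1,\dots,m_{n+1}\in M$. Since $\psi=\phi_n$, the $\psi$-$(n+1)$-tuples of the theorem are precisely the $\phi_n$-$(n+1)$-tuples of the corollary, and the equivalence is exactly the stated one. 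I do not anticipate any obstacle: the proof is a straightforward bookkeeping application of Theorem \ref{idealiz} once the power computation $(I(+)M)^n = I^n(+)I^{n-1}M$ is written down.
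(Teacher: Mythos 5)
Your proposal is correct and is essentially the paper's own proof: the paper likewise notes $(I(+)M)^n=I^n(+)I^{n-1}M$ and then applies Theorem \ref{idealiz} with $\phi=\phi_n$, $\psi=\phi_n$, and $N=I^{n-1}M$. Your extra verification of the power formula is a welcome expansion of a step the paper only asserts.
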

\begin{proof}
Notice that $(I(+)M)^n=I^n(+)I^{n-1}M$. In Theorem \ref{idealiz} set $\phi=\phi_n$, $\psi=\phi_n$ and $N=I^{n-1}M$.
\end{proof}

\begin{corollary}
Let $R$ be a ring, $I$ be a proper ideal of $R$ and $M$ be an $R$-module such that $IM=M$. Then $I(+)M$ is an $n$-almost $n$-absorbing primary ideal of $R(+)M$ if and only if $I$ is an $n$-almost $n$-absorbing primary ideal of $R$.
\end{corollary}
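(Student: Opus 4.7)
The plan is to derive this corollary as an immediate consequence of the preceding corollary. That result characterizes $I(+)M$ being $n$-almost $n$-absorbing primary by two conditions: first, that $I$ itself is $n$-almost $n$-absorbing primary; and second, that for every $\phi_n$-$(n+1)$-tuple $(a_1,\dots,a_{n+1})$ of $I$ and any $m_1,\dots,m_{n+1}\in M$, the second component of the product $(a_1,m_1)\cdots(a_{n+1},m_{n+1})$ lies in $I^{n-1}M$.

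The key observation I will exploit is that the hypothesis $IM=M$ forces the second condition to be vacuous. A straightforward induction yields $I^{k}M=M$ for every $k\geq 0$: if $I^kM=M$, then $I^{k+1}M=I(I^kM)=IM=M$. Taking $k=n-1$ gives $I^{n-1}M=M$. Since the second component of any product in the idealization $R(+)M$ automatically lies in $M$ by definition of the multiplication, the required containment in $I^{n-1}M$ holds for free.

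Therefore the characterization furnished by the previous corollary collapses to the single requirement that $I$ is $n$-almost $n$-absorbing primary, which is exactly the claimed equivalence. No serious obstacle is anticipated; the argument is a one-line application of the preceding corollary once one records the simplification $I^{n-1}M=M$.
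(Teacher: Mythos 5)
Your proof is correct and matches the paper's intent: the corollary is stated there without proof precisely because it follows from the preceding corollary once one notes that $IM=M$ gives $I^{n-1}M=M$, making the second-component condition automatic. Nothing to add.
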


\begin{corollary}
Let $R$ be a ring, $I$ be a proper ideal of $R$ and $M$ be an $R$-module such that $IM=M$. Then $I(+)M$ is an $\omega$-$n$-absorbing primary ideal of $R(+)M$ if and only if $I$ is an $\omega$-$n$-absorbing primary ideal of $R$.
\end{corollary}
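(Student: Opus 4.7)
The plan is to reduce the statement to Theorem \ref{idealiz} by computing $\phi_{\omega}(I(+)M)$ explicitly under the hypothesis $IM=M$, and then to observe that the auxiliary condition on $(n+1)$-tuples in Theorem \ref{idealiz} becomes vacuous when the submodule $N$ appearing there equals $M$.

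First I would record the standard identity $(I(+)M)^{m}=I^{m}(+)I^{m-1}M$ for every $m\geq 1$, which is routine from the definition of multiplication in $R(+)M$. Since $IM=M$, a quick induction gives $I^{m-1}M=M$ for all $m\geq 1$, so $(I(+)M)^{m}=I^{m}(+)M$. Intersecting over $m$ and using that the intersection of the constant family $M$ is $M$, I obtain
\[
\phi_{\omega}\bigl(I(+)M\bigr)=\bigcap_{m=1}^{\infty}(I(+)M)^{m}=\Bigl(\bigcap_{m=1}^{\infty}I^{m}\Bigr)(+)M=\phi_{\omega}(I)(+)M.
\]
Thus the hypothesis of Theorem \ref{idealiz} is satisfied with $\psi=\phi_{\omega}$ and $N=M$, and the containment $\psi(I)M\subseteq N$ holds trivially.

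Now I would apply Theorem \ref{idealiz} directly. That theorem says $I(+)M$ is $\phi_{\omega}$-$n$-absorbing primary in $R(+)M$ if and only if $I$ is $\phi_{\omega}$-$n$-absorbing primary in $R$ together with the side condition that whenever $(a_{1},\dots,a_{n+1})$ is a $\phi_{\omega}$-$(n+1)$-tuple of $I$, the second component of $(a_{1},m_{1})\cdots(a_{n+1},m_{n+1})$ lies in $N$ for every choice of $m_{1},\dots,m_{n+1}\in M$. Since $N=M$, this second component automatically lies in $N$, so the side condition is vacuous.

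Combining the two halves yields the biconditional: $I(+)M$ is $\omega$-$n$-absorbing primary in $R(+)M$ if and only if $I$ is $\omega$-$n$-absorbing primary in $R$. There is no real obstacle here; the only point worth pausing over is the identification $\phi_{\omega}(I(+)M)=\phi_{\omega}(I)(+)M$, which is exactly where the hypothesis $IM=M$ is used (without it, one would only get $\phi_{\omega}(I)(+)\bigcap_{m}I^{m-1}M$, and the side condition in Theorem \ref{idealiz} could fail).
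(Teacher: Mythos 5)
Your proof is correct and follows exactly the route the paper intends: the surrounding corollaries are all obtained by specializing Theorem \ref{idealiz}, and here the specialization is $\psi=\phi_{\omega}$, $N=M$, justified by the computation $\phi_{\omega}(I(+)M)=\phi_{\omega}(I)(+)M$ under $IM=M$, after which the tuple condition is vacuous. Nothing to add.
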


\section{Strongly $\phi$-$n$-absorbing primary ideals}

\begin{proposition}
Let $I$ be a proper ideal of a ring $R$. Then the following conditions are equivalent:
\begin{enumerate}
\item $I$ is strongly $\phi$-$n$-absorbing primary;
\item For every ideals $I_{1},\dots,I_{n+1}$ of $R$ such that
$I\subseteq I_{1}$, $I_{1}\cdots I_{n+1}\subseteq I\backslash\phi(I)$ implies that either 
$I_{1}\cdots I_{n}\subseteq I$ or $I_{1}\cdots\widehat{I_i}\cdots I_{n+1}\subseteq\sqrt{I}$
for some $1\leq i\leq n$.
\end{enumerate}
\end{proposition}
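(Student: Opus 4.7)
The implication (1) $\Rightarrow$ (2) is immediate from the definition: condition (2) is just the defining property of strongly $\phi$-$n$-absorbing primary specialized to tuples whose first ideal happens to contain $I$, so no work is needed.

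The substantive direction is (2) $\Rightarrow$ (1). The plan is to reduce the general case to the special case handled by hypothesis by replacing the first ideal with its sum with $I$. Concretely, suppose $I_1,\dots,I_{n+1}$ are ideals of $R$ with $I_1\cdots I_{n+1}\subseteq I$ and $I_1\cdots I_{n+1}\nsubseteq\phi(I)$. Set $I_1':=I_1+I$, so that $I\subseteq I_1'$. Then
\[
I_1'\cdot I_2\cdots I_{n+1}=I_1I_2\cdots I_{n+1}+I\cdot I_2\cdots I_{n+1}\subseteq I,
\]
and since $I_1I_2\cdots I_{n+1}\subseteq I_1'\cdot I_2\cdots I_{n+1}$ and $I_1I_2\cdots I_{n+1}\nsubseteq\phi(I)$, we still have $I_1'\cdot I_2\cdots I_{n+1}\nsubseteq\phi(I)$. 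Hence the hypothesis of (2) applies to the tuple $(I_1',I_2,\dots,I_{n+1})$.

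From (2) we get two alternatives. If $I_1'\cdot I_2\cdots I_n\subseteq I$, then $I_1I_2\cdots I_n\subseteq I_1'\cdot I_2\cdots I_n\subseteq I$, giving the first conclusion of the strongly $\phi$-$n$-absorbing primary property. Otherwise there is some $1\leq i\leq n$ with $I_1'\cdot I_2\cdots\widehat{I_i}\cdots I_{n+1}\subseteq\sqrt{I}$; if $i=1$ this reads $I_2\cdots I_{n+1}\subseteq\sqrt{I}$ (the case of the omitted first index), and if $i\geq 2$ it gives $I_1I_2\cdots\widehat{I_i}\cdots I_{n+1}\subseteq I_1'\cdot I_2\cdots\widehat{I_i}\cdots I_{n+1}\subseteq\sqrt{I}$. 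In every subcase $I_{n+1}$ is multiplied by $n-1$ of the original $I_1,\dots,I_n$, so the second alternative of the definition is satisfied.

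The only mild subtlety is verifying that the passage $I_1\rightsquigarrow I_1+I$ preserves the condition $\nsubseteq\phi(I)$; this is forced by monotonicity of ideal multiplication and the fact that the larger product still lies inside $I$, so no hypothesis on $\phi$ is needed. I do not anticipate a real obstacle here — the trick is entirely formal and mirrors the standard reduction used to equate strong and ordinary $n$-absorbing-type properties whenever the test tuples are allowed to involve the ideal itself.
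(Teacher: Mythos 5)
Your proof is correct and follows essentially the same route as the paper's: both reduce (2)$\Rightarrow$(1) by replacing the first ideal $I_1$ with $I_1+I$, observing that the product with the remaining ideals still lies in $I$ but not in $\phi(I)$, and then pulling the conclusions back to the original tuple. No further comment is needed.
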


\begin{proof}
$(1)\Rightarrow(2)$ is clear.\\
$(2)\Rightarrow(1)$ Let $J,I_{2},\dots,I_{n+1}$ be ideals of $R$ such that
$JI_{2}\cdots I_{n+1}\subseteq I$ and $JI_{2}\cdots I_{n+1}\\\nsubseteq\phi(I)$. Then we
have that $(J+I)I_{2}\cdots I_{n+1}=(JI_{2}\cdots I_{n+1})+(II_{2}\cdots I_{n+1})\subseteq I$
and  $(J+I)I_{2}\cdots I_{n+1}\nsubseteq\phi(I)$.
Set $I_{1}:=J+I$. Then, by the hypothesis either $I_1\cdots I_{n}\subseteq I$ or $I_2\cdots I_{n+1}\subseteq\sqrt{I}$ or
there exists $2\leq i\leq n$ such that $(J+I)I_{2}\cdots\widehat{I_{i}}\cdots I_{n+1}\subseteq\sqrt{I}$. Therefore, either
$JI_{2}\cdots I_{n}\subseteq I$ or $I_2\cdots I_{n+1}\subseteq\sqrt{I}$ or
there exists $2\leq i\leq n$ such that $JI_{2}\cdots\widehat{I_{i}}\cdots I_{n+1}\subseteq \sqrt{I}$.
So $I$ is strongly $\phi$-$n$-absorbing primary.
\end{proof}

\begin{remark}
Let $R$ be a ring. Notice that ${\rm Jac}(R)$ is a radical ideal of $R$. So ${\rm Jac}(R)$ is a strongly $n$-absorbing ideal of $R$ if and only if $I$ is a strongly $n$-absorbing primary ideal of $R$.
\end{remark}

Given any set $X$, one can define a topology on $X$ where every
subset of $X$ is an open set. This topology is referred to as the discrete
topology on $X$, and $X$ is a discrete topological space if it is equipped with its discrete topology.\\
We denote by ${\rm Max}(R)$ the set of all maximal ideals of $R$.
\begin{theorem}
Let $R$ be a ring and $Max(R)$ be a discrete topological space. Then $Max(R)$ is an infinite set if and only if $Jac(R)$ is not strongly $n$-absorbing for every natural number $n$.
\end{theorem}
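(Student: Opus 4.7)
The plan is to prove the two implications separately, using that ${\rm Jac}(R)=\bigcap_{M\in{\rm Max}(R)} M$ is a radical ideal and that discreteness of ${\rm Max}(R)$ in the Zariski topology supplies ``bump'' elements concentrated at each maximal ideal.

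For the contrapositive of $(\Leftarrow)$, I would assume ${\rm Max}(R)=\{M_1,\dots,M_k\}$ is finite and show that ${\rm Jac}(R)$ is strongly $k$-absorbing. Since distinct $M_i$ are pairwise comaximal, the Chinese Remainder Theorem gives $R/{\rm Jac}(R)\cong \prod_{i=1}^k R/M_i$, a product of $k$ fields. Every ideal of this product is $\prod_{i\in S}R/M_i$ for a unique $S\subseteq\{1,\dots,k\}$, and under this correspondence the product of ideals becomes the intersection of subsets. So if $I_1\cdots I_{k+1}\subseteq{\rm Jac}(R)$, passing to the quotient reduces the claim to the combinatorial fact: if $S_1,\dots,S_{k+1}\subseteq\{1,\dots,k\}$ satisfy $\bigcap_j S_j=\emptyset$, then some $k$-subfamily still has empty intersection. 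For each $i\in\{1,\dots,k\}$ choose $j(i)$ with $i\notin S_{j(i)}$; the map $j:\{1,\dots,k\}\to\{1,\dots,k+1\}$ misses some $j^*$, and then $\bigcap_{j\neq j^*}S_j=\emptyset$.

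For the forward direction, assume ${\rm Max}(R)$ is infinite and fix $n$. Discreteness of ${\rm Max}(R)$ in the Zariski topology means each singleton $\{M\}$ equals a basic open $D(f_M)\cap{\rm Max}(R)$ for some $f_M\in R$, i.e.\ $f_M\notin M$ but $f_M\in N$ for every $N\in{\rm Max}(R)\setminus\{M\}$. Choose distinct $P_1,\dots,P_{n+1}\in{\rm Max}(R)$ and set
$$I_j:=\langle f_{P_i}:1\le i\le n+1,\ i\neq j\rangle.$$
A generator of $I_1\cdots I_{n+1}$ is a product $f_{P_{i_1}}\cdots f_{P_{i_{n+1}}}$ with $i_k\neq k$ for each $k$. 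Such a product escapes a maximal ideal $M$ only when $M=P_{i_k}$ for every $k$, i.e.\ when all $i_k$ take a common value $i^*$ satisfying $i^*\neq k$ for every $k\in\{1,\dots,n+1\}$, which is impossible. Hence every generator lies in every maximal ideal, so $I_1\cdots I_{n+1}\subseteq{\rm Jac}(R)$. On the other hand, for each fixed $k_0$, taking $i_k=k_0$ for every $k\neq k_0$ produces the generator $f_{P_{k_0}}^n$ of $I_1\cdots\widehat{I_{k_0}}\cdots I_{n+1}$, and $f_{P_{k_0}}^n\notin P_{k_0}$ by primeness of $P_{k_0}$. Hence no $n$-fold subproduct is contained in ${\rm Jac}(R)$, so ${\rm Jac}(R)$ fails to be strongly $n$-absorbing.

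The main obstacle is engineering the $I_j$ so that the full $(n+1)$-product falls inside ${\rm Jac}(R)$ while no $n$-fold subproduct does; the key combinatorial input is that the constraint ``$i_k\neq k$ for all $k=1,\dots,n+1$'' admits no constant solution in $\{1,\dots,n+1\}$ (forcing every generator of the full product into every maximal ideal), yet the analogous constraint on any $n$-fold subfamily admits a constant solution $i_k\equiv k_0$, yielding the desired witness outside $P_{k_0}$.
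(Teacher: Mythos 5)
Your argument is correct, and both halves take a genuinely different route from the paper's. For the ``only if of finiteness'' direction the paper simply cites Anderson--Badawi (\cite[Theorem 2.1]{AB1}: an intersection of $k$ prime ideals is strongly $k$-absorbing), whereas you give a self-contained proof by passing to $R/{\rm Jac}(R)\cong\prod_{i=1}^{k}R/M_i$ via CRT and reducing to the pigeonhole fact that $k+1$ subsets of a $k$-set with empty total intersection already have an empty $k$-fold subintersection; this is more elementary and makes the bound $n=|{\rm Max}(R)|$ transparent. For the forward direction the paper invokes the characterization of discreteness as irredundancy of $\bigcap_{M}M$, applies strong $n$-absorption to the $n+1$ ideals $M_1,\dots,M_n,\bigcap_{M\in\mathcal{M}^c}M$, and cancels prime factors to contradict irredundancy; you instead extract the irredundancy witnesses directly as ``bump'' elements $f_{P}$ (with $f_P\notin P$ but $f_P\in N$ for all $N\neq P$) and assemble $n+1$ finitely generated ideals $I_j=\langle f_{P_i}: i\neq j\rangle$ whose full product lies in ${\rm Jac}(R)$ (no constant solution of $i_k\neq k$) while each $n$-fold subproduct contains some $f_{P_{k_0}}^n\notin P_{k_0}$. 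Your construction has the mild advantage of using only $n$-generated ideals, so it refutes even a finitely generated variant of strong $n$-absorption and avoids manipulating the large ideal $\bigcap_{M\in\mathcal{M}^c}M$; the paper's version is shorter because it reuses the maximal ideals themselves as the test ideals. One small presentational point: your identification of the open singleton $\{M\}$ with a basic open $D(f_M)\cap{\rm Max}(R)$ deserves the one-line justification that any open set containing $M$ contains some $D(f)\ni M$, which is exactly the irredundancy statement the paper quotes from \cite{Ali}.
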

\begin{proof}
$(\Leftarrow)$ We can verify this implication without any assumption on ${\rm Max}(R)$, by \cite[Theorem 2.1]{AB1}.\\
$(\Rightarrow)$ Notice that ${\rm Max}(R)$ is a discrete topological space if and only if the Jacobson radical
of $R$ is the irredundant intersection of the maximal ideals of $R$, \cite[Corollary 3.3]{Ali}. Let ${\rm Max}(R)$ be an infinite set. Assume that for some natural number $n$, ${\rm Jac}(R)$ is a strongly $n$-absorbing ideal. Choose $n$ distinct elements $M_1,M_2,\dots,M_n$ of ${\rm Max}(R)$. Set $\mathcal{M}:=\{M_1,M_2,\dots,M_n\}$, and denote by $\mathcal{M}^c$ the complement of $\mathcal{M}$ in ${\rm Max}(R)$. Since 
${\rm Jac}(R)=M_1\cap M_2\cap\dots\cap M_n\cap(\bigcap\limits_{M\in{\mathcal{M}}^c}M)$, then either $M_1\cdots M_{i-1} M_{i+1}\cdots M_n(\bigcap\limits_{M\in{\mathcal{M}}^c}M)\subseteq {\rm Jac}(R)$ for some $1\leq i\leq n$, or
$M_1M_{2}\cdots M_n\subseteq {\rm Jac}(R)$.
In the first case we have $M_1\cdots M_{i-1} M_{i+1}\cdots M_n(\bigcap\limits_{M\in{\mathcal{M}}^c}M)\\\subseteq M_i$ and so $\bigcap\limits_{M\in{\mathcal{M}}^c}M\subseteq M_i$, a contradiction. If $M_1M_{2}\cdots M_n\subseteq {\rm Jac}(R)$,
then $M_1M_{2}\cdots M_n\subseteq M$
for every $M\in\mathcal{M}^c$, and so again we reach a contradiction. Consequently ${\rm Jac}(R)$ is not strongly $n$-absorbing.
\end{proof}

In the next theorem we investigate $\phi$-$n$-absorbing primary ideals over $u$-rings.
Notice that any B$\acute{\rm e}$zout ring is a $u$-ring, \cite[Corollary 1.2]{Q}.

\begin{theorem}\label{main}
Let $R$ be a $u$-ring and let $\phi:\mathfrak{J}(R)\rightarrow \mathfrak{J}(R)\cup\{\emptyset\}$
be a function. Then the following conditions are equivalent:
\begin{enumerate}
\item $I$ is strongly $\phi$-$n$-absorbing primary;
\item $I$ is $\phi$-$n$-absorbing primary;
\item For every elements $x_{1},\dots,x_{n}\in R$ with $x_{1}\cdots x_{n}\notin\sqrt{I}$ either 
$$(I:_{R}x_{1}\cdots x_{n})=(I:_{R}x_{1}\cdots x_{n-1})$$ or
$(I:_{R}x_{1}\cdots x_{n})\subseteq(\sqrt{I}:_{R}x_{1}\cdots\widehat{x_{i}}\cdots x_{n})$ for some $1\leq i\leq n-1$  or $(I:_{R}x_{1}\cdots x_{n})=(\phi(I):_{R}x_{1}\cdots x_{n});$
\item For every $t$ ideals $I_{1},\dots,I_{t}$, $1\leq t\leq n-1$, and for every elements $x_{1},\dots,x_{n-t}\\\in R$ such that $x_{1}\cdots x_{n-t}I_{1}\cdots I_{t}\nsubseteq\sqrt{I}$, 
$$(I:_{R}x_{1}\cdots x_{n-t}I_{1}\cdots I_{t})=(I:_{R}x_{1}\cdots x_{n-t-1}I_1\cdots I_{t})$$
or
$$(I:_{R}x_{1}\cdots x_{n-t}I_{1}\cdots I_{t})\subseteq(\sqrt{I}:_{R}x_{1}\cdots\widehat{x_{i}}\cdots x_{n-t}I_{1}\cdots I_{t})$$ for some $1\leq i\leq n-t-1$
or $$(I:_{R}x_{1}\cdots x_{n-t}I_{1}\cdots I_{t})\subseteq(\sqrt{I}:_{R}x_{1}\cdots x_{n-t}I_{1}\cdots\widehat{I_{j}}\cdots I_{t})$$ for some $1\leq j\leq t$ or 
$$(I:_{R}x_{1}\cdots x_{n-t}I_{1}\cdots I_{t})=(\phi(I):_{R}x_{1}\cdots x_{n-t}I_{1}\cdots I_{t}).$$
\item For every ideals $I_1,I_2,\dots,I_n$ of $R$ with $I_1I_2\cdots I_n\nsubseteq I$, either there is $1\leq i\leq n$ such that 
$(I:_RI_1\cdots I_n)\subseteq(\sqrt{I}:_RI_1\cdots\widehat{I_{i}}\cdots I_{n})$ or $(I:_RI_1\cdots I_n)=(\phi(I):_RI_1\cdots I_{n})$.
\end{enumerate}
\end{theorem}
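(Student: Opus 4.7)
The plan is to prove the cycle $(1)\Rightarrow(2)\Rightarrow(3)\Rightarrow(4)\Rightarrow(5)\Rightarrow(1)$, with the $u$-ring hypothesis entering each time one wishes to convert ``contained in a finite union of ideals'' into ``contained in one of them.'' The implication $(1)\Rightarrow(2)$ is immediate: specialize the strongly-property to principal ideals $\langle a_i\rangle$.

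For $(2)\Rightarrow(3)$, I would invoke Theorem \ref{main11}, which already shows that (2) is equivalent to the single inclusion
\[(I:_R x_1\cdots x_n)\subseteq \Bigl(\bigcup_{i=1}^{n-1}(\sqrt{I}:_R x_1\cdots\widehat{x_i}\cdots x_n)\Bigr)\cup (I:_R x_1\cdots x_{n-1})\cup(\phi(I):_R x_1\cdots x_n)\]
whenever $x_1\cdots x_n\notin\sqrt{I}$. Since $R$ is a $u$-ring, the ideal on the left must be contained in one of the ideals on the right. Observing that $(I:_R x_1\cdots x_{n-1})$ and $(\phi(I):_Rx_1\cdots x_n)$ are already contained in $(I:_Rx_1\cdots x_n)$, containment in either of those two automatically upgrades to equality, producing exactly the three alternatives of (3).

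The implications $(3)\Rightarrow(4)\Rightarrow(5)$ I would prove by induction on the number of ``ideal slots,'' promoting one element at a time to an ideal. For the base step of $(3)\Rightarrow(4)$ with $t=1$, fix $x_1,\dots,x_{n-1}$ and an ideal $I_1$ with $x_1\cdots x_{n-1}I_1\nsubseteq\sqrt{I}$; for every $y\in I_1$ with $x_1\cdots x_{n-1}y\notin\sqrt{I}$ condition (3) assigns to $y$ one of four labels, and
\[(I:_R x_1\cdots x_{n-1}I_1)\;\subseteq\;\bigcap_{y\in I_1}(I:_R x_1\cdots x_{n-1}y)\]
sits inside the finite union of the four ``candidate containments'' (those in which the excluded $x_i$ does not depend on $y$ in an essential way, after routine reindexing; the $y$-dependent cases collapse because $y\in I_1$). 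The $u$-ring hypothesis then places $(I:_R x_1\cdots x_{n-1}I_1)$ inside a single one of these ideals, giving (4). The inductive step, and the promotion from (4) to (5), proceed in exactly the same manner: fix all but one slot, apply the previous condition to each generator of the remaining slot, and use the $u$-ring property to amalgamate the four (or more) cases into one.

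For $(5)\Rightarrow(1)$, suppose $I_1\cdots I_{n+1}\subseteq I$ with $I_1\cdots I_{n+1}\nsubseteq\phi(I)$. If $I_1\cdots I_n\subseteq I$ we are done; otherwise $I_{n+1}\subseteq(I:_R I_1\cdots I_n)$, and the dichotomy of (5) gives either $I_{n+1}\subseteq(\sqrt{I}:_R I_1\cdots\widehat{I_i}\cdots I_n)$ for some $i$ (whence $I_1\cdots\widehat{I_i}\cdots I_{n+1}\subseteq\sqrt{I}$, which is the strong conclusion) or $(I:_R I_1\cdots I_n)=(\phi(I):_R I_1\cdots I_n)$ (whence $I_1\cdots I_{n+1}\subseteq\phi(I)$, contradicting the hypothesis). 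The anticipated main obstacle is the bookkeeping in the inductive amalgamation step: each promotion of an element to an ideal adds an extra ``ideal-removal'' case to the dichotomy, and one must check that the union of the four possibilities over $y\in I_1$ is a finite union of fixed ideals (not varying with $y$) before the $u$-ring property can be applied; handling this requires re-indexing the $\widehat{x_i}$ cases and noting that the $y$-dependent parts factor out uniformly once $y$ is in $I_1$.
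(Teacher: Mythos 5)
Your overall architecture (the cycle $(1)\Rightarrow(2)\Rightarrow(3)\Rightarrow(4)\Rightarrow(5)\Rightarrow(1)$, with Theorem \ref{main11} plus the $u$-ring property giving $(2)\Rightarrow(3)$, and your arguments for $(1)\Rightarrow(2)$ and $(5)\Rightarrow(1)$) agrees with the paper, and those parts are fine. The gap is in the inductive steps $(3)\Rightarrow(4)$ and $(4)\Rightarrow(5)$, where you ``promote an element to an ideal'' by applying the previous condition to the tuples $(x_1,\dots,x_{n-1},y)$ for the various $y\in I_1$. This fails for two reasons. First, the alternatives that condition (3) hands you for a given $y$ involve ideals that genuinely depend on $y$ --- e.g.\ $(\sqrt{I}:_Rx_1\cdots\widehat{x_i}\cdots x_{n-1}y)$ --- and the index $i$ (indeed the whole label) can change from one $y$ to the next; so what you obtain is containment of $(I:_Rx_1\cdots x_{n-1}I_1)$ in an \emph{infinite}, $y$-indexed union, to which the $u$-ring property does not apply, and I do not see how ``the $y$-dependent parts factor out uniformly.'' Second, even if one could amalgamate, placing $y$ in the \emph{last} slot of (3) produces the wrong alternatives: the ``drop the last factor'' case gives $(I:_Rx_1\cdots x_{n-1}I_1)\subseteq(I:_Rx_1\cdots x_{n-1})$, and the case $i=n-1$ gives $(\sqrt{I}:_Rx_1\cdots x_{n-2}I_1)$; neither of these is among the four alternatives listed in (4), where the factor removable into $I$ is $x_{n-1}$ rather than $I_1$, and removing $I_1$ must land you in $\sqrt{I}$, not in $I$.

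The paper's decomposition avoids both problems: fix $a\in(I:_Rx_1\cdots x_{n-1}I_1)$, observe that $I_1\subseteq(I:_Rax_1\cdots x_{n-1})$, and apply condition (3) to the tuple $(a,x_1,\dots,x_{n-2},x_{n-1})$ with $a$ occupying one of the first $n-1$ slots and $x_{n-1}$ the last. Each alternative then translates into membership of $a$ in one of finitely many ideals that do \emph{not} depend on $a$, namely $(\sqrt I:_Rx_1\cdots x_{n-1})$ (when $ax_1\cdots x_{n-1}\in\sqrt I$), $(I:_Rx_1\cdots x_{n-2}I_1)$, $(\sqrt I:_Rx_1\cdots\widehat{x_i}\cdots x_{n-1}I_1)$ for $1\leq i\leq n-2$, the excluded case $I_1\subseteq(\sqrt I:_Rx_1\cdots x_{n-1})$ (ruled out by $x_1\cdots x_{n-1}I_1\nsubseteq\sqrt I$), and $(\phi(I):_Rx_1\cdots x_{n-1}I_1)$; hence $(I:_Rx_1\cdots x_{n-1}I_1)$ lies in a finite union of fixed ideals and the $u$-ring hypothesis applies, yielding exactly the alternatives of (4). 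The same device --- quantifying over elements of the colon ideal rather than over elements of the slot being promoted --- is what makes the induction on $t$ in $(3)\Rightarrow(4)$ and the step $(4)\Rightarrow(5)$ go through; you would need to rework those steps along these lines.
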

\begin{proof}
(1)$\Rightarrow$(2) It is clear.\\
(2)$\Rightarrow$(3) Suppose that $x_{1},\dots,x_{n}\in R$ such that $x_{1}\cdots x_{n}\notin\sqrt{I}$. By Theorem \ref{main11}, $$(I:_{R}x_{1}\cdots x_{n})\subseteq[\cup_{i=1}^{n-1}(\sqrt{I}:_{R}x_{1}\cdots\widehat{x_{i}}\cdots x_{n})]\cup$$
$$\hspace{4.3cm}(I:_{R}x_{1}\cdots x_{n-1})\cup(\phi(I):_{R}x_{1}\cdots x_{n}).$$ 
Since $R$ is a $u$-ring we have either 
$(I:_{R}x_{1}\cdots x_{n})\subseteq(\sqrt{I}:_{R}x_{1}\cdots\widehat{x_{i}}\cdots x_{n})$ for some $1\leq i\leq n-1$  or
$(I:_{R}x_{1}\cdots x_{n})=(I:_{R}x_{1}\cdots x_{n-1})$ or $(I:_{R}x_{1}\cdots x_{n})=(\phi(I):_{R}x_{1}\cdots x_{n})$.\\
(3)$\Rightarrow$(4)
We use induction on $t$. For $t=1$, consider elements $x_1,\dots, x_{n-1}\in R$ and ideal $I_1$ of $R$ such that
$x_1\cdots x_{n-1}I_1\nsubseteq\sqrt{I}$. Let $a\in(I:_Rx_1\cdots x_{n-1}I_1)$. Then $I_1\subseteq(I:_Rax_1\cdots x_{n-1})$. If
$ax_1\cdots x_{n-1}\in\sqrt{I}$, then $a\in(\sqrt{I}:_Rx_1\cdots x_{n-1})$. If $ax_1\cdots x_{n-1}\notin\sqrt{I}$, then by part (3),
either $I_1\subseteq(I:_Rax_1\cdots x_{n-2})$ or $I_1\subseteq(\sqrt{I}:_Rax_1\cdots\widehat{x_i}\cdots x_{n-1})$
for some $1\leq i\leq n-2$ or $I_1\subseteq(\sqrt{I}:_Rx_1\cdots x_{n-1})$ or $I_1\subseteq(\phi(I):_Rax_1\cdots x_{n-1})$. 
The first case implies that $a\in(I:_Rx_1\cdots x_{n-2}I_1)$. The second case implies that 
$a\in(\sqrt{I}:_Rx_1\cdots\widehat{x_i}\cdots x_{n-1}I_1)$ for some $1\leq i\leq n-2$. The third case cannot be happen, because 
$x_1\cdots x_{n-1}I_1\nsubseteq\sqrt{I}$, and the last case implies that $a\in(\phi(I):_Rx_1\cdots x_{n-1}I_1)$. Hence  
$$(I:_{R}x_{1}\cdots x_{n-1}I_1)\subseteq\cup_{i=1}^{n-2}(\sqrt{I}:_{R}x_{1}\cdots\widehat{x_{i}}\cdots x_{n-1}I_1)\cup(\sqrt{I}:_Rx_1\cdots x_{n-1})$$$$\hspace{2.6cm}\cup(I:_Rx_1\cdots x_{n-2}I_1)\cup(\phi(I):_{R}x_{1}\cdots x_{n-1}I_1).$$ Since $R$ is a $u$-ring, then either
$(I:_{R}x_{1}\cdots x_{n-1}I_1)\subseteq(\sqrt{I}:_{R}x_{1}\cdots\widehat{x_{i}}\cdots x_{n-1}I_1)$
for some $1\leq i\leq n-2$, or $(I:_{R}x_{1}\cdots x_{n-1}I_1)\subseteq(\sqrt{I}:_Rx_1\cdots x_{n-1})$
or $(I:_{R}x_{1}\cdots x_{n-1}I_1)=(I:_Rx_1\cdots x_{n-2}I_1)$ or 
$(I:_{R}x_{1}\cdots x_{n-1}I_1)=(\phi(I):_{R}x_{1}\cdots x_{n-1}I_1).$ Now suppose $t>1$ and assume that for integer $t-1$ the claim holds. Let $x_{1},\dots,x_{n-t}$ be elements of $R$ and let $I_{1},\dots,I_{t}$ be ideals of $R$ such that \\ $x_{1}\cdots x_{n-t}I_{1}\cdots I_{t}\nsubseteq\sqrt{I}$. Consider element $a\in(I:_{R}x_{1}\cdots x_{n-t}I_{1}\cdots I_{t})$. Thus $I_{t}\subseteq(I:_{R}ax_{1}\cdots x_{n-t}I_{1}\cdots I_{t-1})$. If $ax_{1}\cdots x_{n-t}I_{1}\cdots I_{t-1}\subseteq\sqrt{I}$, then $a\in(\sqrt{I}:_Rx_{1}\cdots x_{n-t}I_{1}\cdots I_{t-1})$. If $ax_{1}\cdots x_{n-t}I_{1}\cdots I_{t-1}\nsubseteq\sqrt{I}$, then by induction hypothesis, either
$$(I:_{R}ax_{1}\cdots x_{n-t}I_{1}\cdots I_{t-1})\subseteq(\sqrt{I}:_{R}x_{1}\cdots x_{n-t}I_{1}\cdots I_{t-1})$$ 
or
 $$(I:_{R}ax_{1}\cdots x_{n-t}I_{1}\cdots I_{t-1})\subseteq(\sqrt{I}:_{R}ax_{1}\cdots\widehat{x_{i}}\cdots x_{n-t}I_{1}\cdots I_{t-1})$$ for some $1\leq i\leq n-t-1$ 
or
$$(I:_{R}ax_{1}\cdots x_{n-t}I_{1}\cdots I_{t-1})\subseteq(\sqrt{I}:_{R}ax_{1}\cdots x_{n-t}I_{1}\cdots\widehat{I_{j}}\cdots I_{t-1})$$ for some $1\leq j\leq t-1$
 or 
$$(I:_{R}ax_{1}\cdots x_{n-t}I_{1}\cdots I_{t-1})=(I:_{R}ax_{1}\cdots x_{n-t-1}I_{1}\cdots I_{t-1}),$$  
or
$(I:_{R}ax_{1}\cdots x_{n-t}I_{1}\cdots I_{t-1})=(\phi(I):_{R}ax_{1}\cdots x_{n-t}I_{1}\cdots I_{t-1})$. Since $x_{1}\cdots x_{n-t}\\I_{1}\cdots I_{t}\nsubseteq\sqrt{I}$, then the first case cannot happen. Consequently, either $a\in(\sqrt{I}:_{R}x_{1}\cdots\widehat{x_{i}}\cdots x_{n-t}I_{1}\cdots I_{t})$ 
for some $1\leq i\leq n-t-1$ or
$a\in(\sqrt{I}:_{R}x_{1}\cdots x_{n-t}I_{1}\cdots\widehat{I_{j}}\cdots I_{t})$ for some $1\leq j\leq t-1$
 or 
$a\in(I:_{R}x_{1}\cdots x_{n-t-1}I_{1}\cdots I_{t}),$ 
or
$a\in(\phi(I):_{R}x_{1}\cdots x_{n-t}I_{1}\cdots I_{t})$.
Hence 
\begin{eqnarray*}
(I:_{R}x_{1}\cdots x_{n-t}I_{1}\cdots I_{t})&\subseteq&[\cup_{i=1}^{n-t-1}(\sqrt{I}:_{R}x_{1}\cdots\widehat{x_{i}}\cdots x_{n-t}I_{1}\cdots I_{t})]\\
&\cup&[\cup_{j=1}^{t}(\sqrt{I}:_{R}x_{1}\cdots x_{n-t}I_{1}\cdots\widehat{I_{j}}\cdots I_{t})]\\
&\cup&(I:_{R}x_{1}\cdots x_{n-t-1}I_{1}\cdots I_{t})\\
&\cup&(\phi(I):_{R}x_{1}\cdots x_{n-t}I_{1}\cdots I_{t}).
\end{eqnarray*}
Now, since $R$ is $u$-ring we are done.\\ 
(4)$\Rightarrow$(5) Let $I_1,I_2,\dots,I_n$ be ideals of $R$ such that $I_1I_2\cdots I_n\nsubseteq I$. Suppose that
$a\in(I:_RI_1I_2\cdots I_n)$. Then $I_n\subseteq(I:_RaI_1I_2\cdots I_{n-1})$. If $aI_1I_2\cdots I_{n-1}\subseteq\sqrt{I}$,
then $a\in(\sqrt{I}:_RI_1I_2\cdots I_{n-1})$. If $aI_1I_2\cdots I_{n-1}\nsubseteq\sqrt{I}$, then by part (4) we have either
$I_n\subseteq(I:_RI_1I_2\cdots I_{n-1})$ or $I_n\subseteq(\sqrt{I}:_RaI_1\cdots\widehat{I_i}\cdots I_{n-1})$ for some $1\leq i\leq n-1$
or $I_n\subseteq(\phi(I):_RaI_1I_2\cdots I_{n-1})$. By hypothesis, the first case is not hold. The second case imples that $a\in(\sqrt{I}:_RI_1\cdots\widehat{I_i}\cdots I_{n})$
for some $1\leq i\leq n-1$. The third case implies that $a\in(\phi(I):_RI_1I_2\cdots I_{n})$. Similarly, since $R$ is $u$-ring, there is $1\leq i\leq n$ such that $(I:_RI_1\cdots I_n)\subseteq(\sqrt{I}:_RI_1\cdots\widehat{I_{i}}\cdots I_{n})$ or $(I:_RI_1\cdots I_n)=(\phi(I):_RI_1\cdots I_{n})$.\\
 (5)$\Rightarrow$(1) This implication has an easy verification.
\end{proof}

\begin{remark}
Note that in Theorem \ref{main}, for the case $n=2$ and $\phi=\phi_{\emptyset}$ we can omit the condition $u$-ring, 
by the fact that if an ideal (a subgroup) is the union of two ideals (two
subgroups), then it is equal to one of them. So we conclude that an ideal $I$ of a ring $R$ is 2-absorbing primary if and only if
it is strongly 2-absorbing primary.
\end{remark}
Let $R$ be a  ring with identity. We recall that if $f=a_0+a_1X+\cdots+a_tX^t$
is a polynomial on the ring $R$, then {\it content} of $f$ is defined as the $R$-ideal, generated by the coefficients of $f$, i.e.
$c( f )=(a_0,a_1,\dots,a_t)$. Let $T$ be an $R$-algebra and $c$ the function from $T$ to the ideals of $R$ defined by  $c(f)=\cap\{I\mid$ $I$ \mbox{is an ideal of} $R$ \mbox{and} $f\in IT\}$ known as the content of $f$. Note that the content function $c$ is nothing but the generalization of the content of a polynomial $f\in R[X]$. The $R$-algebra $T$ is called a {\it content $R$-algebra} if the following conditions hold:
\begin{enumerate}
\item For all $f\in T$, $f\in c(f)T$.
\item (Faithful flatness ) For any $r\in R$ and $f\in T$, the equation $c(rf )=rc(f)$ holds and
$c(1_T)=R$.
\item (Dedekind-Mertens content formula) For each $f,g\in T$, there exists a natural
number $n$ such that $c(f)^nc(g)=c(f)^{n-1}c(fg)$.
\end{enumerate}
For more information on content algebras and their examples we refer to \cite{north}, \cite{ohm} and \cite{rush}.
In \cite{nas} Nasehpour gave the definition of a Gaussian $R$-algebra as follows: Let $T$ be an $R$-algebra such that $f\in c(f)T$ for all $f\in T$. $T$ is said to be a Gaussian $R$-algebra if $c(fg)=c( f )c(g)$, for all $f,g\in T$.
\begin{example}(\cite{nas})
Let $T$ be a content $R$-algebra such that $R$ is a Pr\"{u}fer domain. Since every
nonzero finitely generated ideal of $R$ is a cancellation ideal of $R$, the Dedekind-Mertens 
content formula causes $T$ to be a Gaussian $R$-algebra.
\end{example}

In the following theorem we use the functions $\phi_R$ and $\phi_T$ that defined just prior to Theorem \ref{context}.
\begin{theorem}
Let R be a Pr\"{u}fer domain, $T$ a content $R$-algebra and $I$ an ideal of $R$. Then $I$ is a $\phi_R$-$n$-absorbing primary ideal of $R$ if and only if $IT$ is a $\phi_T$-$n$-absorbing primary ideal of $T$.
\end{theorem}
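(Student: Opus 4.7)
The plan is to exploit three ingredients made available by the Pr\"ufer hypothesis on $R$: first, $T$ is a Gaussian $R$-algebra (as recalled just before the theorem), so $c(fg)=c(f)c(g)$ for all $f,g\in T$; second, $R$ is a B\'ezout-style $u$-ring, hence Theorem \ref{main} applies and the notions of $\phi_R$-$n$-absorbing primary and strongly $\phi_R$-$n$-absorbing primary coincide in $R$; third, since $R$ is Pr\"ufer we have $J^{ce}=J$ for every ideal $J$ of $T$, which gives $\phi_T(IT)=\phi_R(I)^e=\phi_R(I)T$ directly from the definition $\phi_R(I)=\phi_T(I^e)^c$. Throughout I will silently use two general facts about content algebras: $f\in JT\iff c(f)\subseteq J$ (combining property (1) with the intersection description of $c$), and consequently, via Gaussianness, $\sqrt{IT}=\sqrt{I}\,T$ and $IT\cap R=I$ (the latter also using faithful flatness, property (2)).

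For the forward implication, assume $I$ is $\phi_R$-$n$-absorbing primary, hence strongly $\phi_R$-$n$-absorbing primary by Theorem \ref{main}. Take $f_1,\dots,f_{n+1}\in T$ with $f_1\cdots f_{n+1}\in IT\setminus\phi_T(IT)$. The content characterization and the Gaussian identity give
\[
c(f_1)c(f_2)\cdots c(f_{n+1})=c(f_1\cdots f_{n+1})\subseteq I,\qquad c(f_1)\cdots c(f_{n+1})\nsubseteq\phi_R(I),
\]
using $\phi_T(IT)=\phi_R(I)T$ for the second containment. Strong $\phi_R$-$n$-absorbing primariness applied to the ideals $c(f_1),\dots,c(f_{n+1})$ yields either $c(f_1)\cdots c(f_n)\subseteq I$, which by Gaussianness means $c(f_1\cdots f_n)\subseteq I$ and hence $f_1\cdots f_n\in IT$; or, for some $1\le i\le n$, the product of $c(f_{n+1})$ with $n-1$ of $c(f_1),\dots,c(f_n)$ lies in $\sqrt{I}$, which by the same translation and $\sqrt{IT}=\sqrt{I}\,T$ means the analogous product of $f$'s lies in $\sqrt{IT}$. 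This is exactly the $\phi_T$-$n$-absorbing primary condition for $IT$.

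For the reverse implication, assume $IT$ is $\phi_T$-$n$-absorbing primary and let $a_1,\dots,a_{n+1}\in R$ satisfy $a_1\cdots a_{n+1}\in I\setminus\phi_R(I)$. Viewing these elements in $T$, we have $a_1\cdots a_{n+1}\in IT$, and $a_1\cdots a_{n+1}\notin\phi_T(IT)$ since $\phi_T(IT)\cap R=\phi_T(IT)^c=\phi_R(I)$. By hypothesis either $a_1\cdots a_n\in IT$ or $a_1\cdots\widehat{a_i}\cdots a_{n+1}\in\sqrt{IT}$ for some $1\le i\le n$. Contracting back to $R$ via $IT\cap R=I$ and $\sqrt{I}\,T\cap R=\sqrt{I}$ (faithful flatness of content algebras) we obtain either $a_1\cdots a_n\in I$ or $a_1\cdots\widehat{a_i}\cdots a_{n+1}\in\sqrt{I}$, as required.

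The main obstacles in this plan are not arithmetic but bookkeeping: one must be careful that the definition $\phi_R(I)=\phi_T(IT)^c$ combined with $J^{ce}=J$ really yields $\phi_T(IT)=\phi_R(I)T$, and that the step from an ideal-level condition $c(f_1)\cdots c(f_{n+1})\subseteq I$ to an element-level condition $f_1\cdots f_{n+1}\in IT$ genuinely requires the strongly $\phi$-$n$-absorbing primary version of the hypothesis; this is precisely why Theorem \ref{main} and the Pr\"ufer (hence $u$-ring) hypothesis enter in an essential way.
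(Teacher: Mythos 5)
Your proposal is correct and follows essentially the same route as the paper: Gaussianness of $T$ over the Pr\"ufer domain $R$ to factor contents, Theorem \ref{main} (via the $u$-ring property) to upgrade to the strongly $\phi_R$-$n$-absorbing primary condition on the ideals $c(f_i)$, and contraction for the converse (which the paper handles by citing Theorem \ref{context} with $J=IT$, the same computation you carry out directly). The only cosmetic difference is that the paper avoids asserting the full equality $\phi_T(IT)=\phi_R(I)T$, using only the always-valid inclusion $(\phi_T(IT)\cap R)T\subseteq\phi_T(IT)$ and the trivial inclusion $(\sqrt{I})T\subseteq\sqrt{IT}$ where you invoke equalities.
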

\begin{proof}
Assume that $I$ is a $\phi_R$-$n$-absorbing primary ideal of $R$. Let $f_1f_2\cdots f_{n+1}\in IT\backslash\phi_T(IT)$ for some $f_1,f_2,\dots,f_{n+1}\in T$ such that $f_1f_2\cdots f_n\notin IT$. Then \\$c(f_1f_2\cdots f_{n+1})\subseteq I$. Since $R$ is a Pr\"{u}fer domain and $T$ is a content $R$-algebra, 
then $T$ is a Gaussian $R$-algebra. Therefore $c(f_1f_2\cdots f_{n+1})=c(f_1)c(f_2)\cdots c(f_{n+1})\\\subseteq I$. If 
$c(f_1f_2\cdots f_{n+1})\subseteq\phi_R(I)=\phi_T(IT)\cap R$, then $$f_1f_2\cdots f_{n+1}\in c(f_1f_2\cdots f_{n+1})T\subseteq(\phi_T(IT)\cap R)T\subseteq \phi_T(IT),$$ which is a contradiction. Hence $c(f_1)c(f_2)\cdots c(f_{n+1})\subseteq I$ and $c(f_1)c(f_2)\cdots c(f_{n+1})\\\nsubseteq\phi_R(I)$. Since $R$ is a $u$-domain, $I$ is a strongly $\phi_R$-$n$-absorbing primary ideal of $R$, by Theorem \ref{main}, and
this implies either $c(f_1)c(f_2)\cdots c(f_n)\subseteq I$ or $c(f_1)\cdots\widehat{c(f_i)}\cdots \\c(f_{n+1})\subseteq\sqrt{I}$ for some 
$1\leq i\leq n$. In the first case we have $f_1f_2\cdots f_n\in c(f_1f_2\cdots f_n)T\subseteq IT$, which contradicts our hypothesis. In the second case we have  $f_1\cdots\widehat{f_i}\cdots f_{n+1}\in(\sqrt{I})T\subseteq\sqrt{IT}$  for some 
$1\leq i\leq n$. Consequently $IT$ is a $\phi_T$-$n$-absorbing primary ideal of $T$.\\
For the converse, note that since $T$ is a content $R$-algebra, $IT\cap R=I$ for every ideal $I$ of $R$. Now, apply Theorem \ref{context}.
\end{proof}

The algebra of all polynomials over an arbitrary ring with an arbitrary number of indeterminates is an example of content algebras.
\begin{corollary}\label{last}
Let $R$ be a Pr\"{u}fer domain and $I$ be an ideal of $R$. Then $I$ is a $\phi_R$-$n$-absorbing primary ideal of $R$ if and only if $I[X]$ is a  $\phi_{R[X]}$-$n$-absorbing primary ideal of $R[X]$.
\end{corollary}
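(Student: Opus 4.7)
The plan is to obtain this corollary as an immediate specialization of the preceding theorem, taking the content $R$-algebra to be $T := R[X]$. To carry this out I need only verify two easy points and then invoke the theorem.

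First, I would justify that $R[X]$ is a content $R$-algebra. This is exactly the content of the remark placed immediately before the corollary: the polynomial ring in any set of indeterminates over an arbitrary commutative ring is a content algebra. Since $R$ is in particular a commutative ring (a Pr\"ufer domain), $R[X]$ qualifies, so the hypotheses of the preceding theorem are met with $T = R[X]$.

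Second, I would identify the extended ideal $IT$ with $I[X]$. For an ideal $I$ of $R$, the extension $I \cdot R[X]$ consists precisely of those polynomials whose coefficients all lie in $I$, i.e., $IT = I[X]$. Moreover, under this identification the function $\phi_T$ defined earlier (via $\phi_T(J) = \phi_R(J^c)^e$, or equivalently $\phi_T(I^e)^c = \phi_R(I)$) is exactly what is denoted $\phi_{R[X]}$ in the statement of the corollary.

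Combining these two observations, the preceding theorem applied to $T = R[X]$ gives: $I$ is a $\phi_R$-$n$-absorbing primary ideal of $R$ if and only if $IT = I[X]$ is a $\phi_T = \phi_{R[X]}$-$n$-absorbing primary ideal of $R[X]$, which is the claim. There is no real obstacle here, since the heavy lifting (both directions, using that $R$ is a $u$-domain via Theorem \ref{main} together with Theorem \ref{context}) was already done in the previous theorem; the only verification required for the corollary is that $R[X]$ is a content $R$-algebra, and this is recorded in the cited remark.
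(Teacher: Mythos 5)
Your proposal is correct and is exactly the route the paper intends: the corollary is obtained by specializing the preceding theorem to the content $R$-algebra $T=R[X]$, using the remark that polynomial algebras are content algebras and the identification $IT=I[X]$ with $\phi_T=\phi_{R[X]}$. Nothing further is needed.
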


As two special cases of Corollary \ref{last}, when $\phi_R=\phi_T=\emptyset$ and $\phi_R=\phi_T=0$ we have the following result.
\begin{corollary}
Let $R$ be a Pr\"{u}fer domain and $I$ be an ideal of $R$. Then $I$ is an $n$-absorbing primary ideal of $R$ if and only if $I[X]$ is an $n$-absorbing primary ideal of $R[X]$.
\end{corollary}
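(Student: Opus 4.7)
The plan is to deduce this statement as the $\phi_{\emptyset}$ instance of Corollary \ref{last}. I would take $\phi_{R[X]}$ to be the constant empty function on $\mathfrak{J}(R[X])$, i.e.\ $\phi_{R[X]}(J)=\emptyset$ for every ideal $J$ of $R[X]$. Under the recipe for $\phi_R$ introduced just before Theorem \ref{context}, this forces $\phi_R(I)=\phi_{R[X]}(I[X])^c=\emptyset$ for every ideal $I$ of $R$, so both functions coincide with $\phi_{\emptyset}$.

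Next, I would unwind the definition: a $\phi_{\emptyset}$-$n$-absorbing primary ideal of a ring is, as listed among the standard choices of $\phi_{\alpha}$ in the introduction, simply an $n$-absorbing primary ideal, and this reduction applies equally in $R$ and in $R[X]$. Feeding the pair $(\phi_R,\phi_{R[X]})=(\phi_{\emptyset},\phi_{\emptyset})$ into Corollary \ref{last} therefore yields exactly the biconditional stated here.

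Since the whole argument amounts to a substitution into an already-established corollary, there is essentially no real obstacle; the only point worth double-checking is that the compatibility relation $\phi_R(I)=\phi_{R[X]}(I[X])^c$ assumed in the proof of Corollary \ref{last} remains valid under the degenerate choice $\phi_{R[X]}\equiv\emptyset$, which is automatic because the empty set is its own contraction and extension.
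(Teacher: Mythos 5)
Your proposal is correct and matches the paper's own derivation: the paper obtains this corollary precisely as the special case $\phi_R=\phi_{R[X]}=\phi_{\emptyset}$ of Corollary \ref{last}, observing that $\phi_{\emptyset}$-$n$-absorbing primary means $n$-absorbing primary. Your additional check that the recipe $\phi_R(I)=\phi_{R[X]}(I[X])^c$ degenerates correctly when $\phi_{R[X]}\equiv\emptyset$ is exactly the right (and only) point to verify.
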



\end{document}